\newcommand\F{{\mathbb F}}
\newcommand\N{{\mathbb N}}
\newcommand\cG{{\mathcal G}}
\newcommand\cV{{\mathcal V}}
\newtheorem{theorem}{Theorem}[section]
\newtheorem{corollary}[theorem]{Corollary}
\newtheorem{proposition}[theorem]{Proposition}
\newtheorem{conjecture}[theorem]{Conjecture}
\theoremstyle{definition}
\newtheorem{definition}[theorem]{Definition}
\newtheorem{remark}[theorem]{Remark}
\numberwithin{equation}{section}
\begin{document}

\title{On the equational graphs over finite fields}

\author[B. Mans]{Bernard Mans}
\address{Department of Computing, Macquarie University, Sydney, NSW 2109, Australia}
\email{bernard.mans@mq.edu.au}

\author[M. Sha]{Min Sha}
\address{School of Mathematics and Statistics, University of New South Wales, Sydney, NSW 2052, Australia}
\email{shamin2010@gmail.com}

\author[J. Smith]{Jeffrey Smith}
\address{Department of Computing, Macquarie University, Sydney, NSW 2109, Australia}
\email{jeffrey.papworth.smith@gmail.com}

\author[D. Sutantyo]{Daniel Sutantyo}
\address{Department of Computing, Macquarie University, Sydney, NSW 2109, Australia}
\email{daniel.sutantyo@gmail.com}

\keywords{Finite field, functional graph, equational graph, strong connectedness, connected component,  Hamiltonian cycle}

\subjclass[2010]{05C20, 05C38, 05C40, 11T06}



\begin{abstract} 
In this paper, we generalize the notion of functional graph. 
Specifically, given an equation $E(X,Y) = 0$
with variables $X$ and $Y$
over a finite field $\F_q$ of odd characteristic, 
we define a digraph by choosing the elements in $\F_q$ as vertices and drawing an edge from $x$ to $y$ if and only if $E(x,y)=0$. 
We call this graph as equational graph. 
In this paper, we study the equational graphs when choosing $E(X,Y) = (Y^2 - f(X))(\lambda Y^2 - f(X))$ 
with $f(X)$ a polynomial over $\F_q$ and  $\lambda$ a non-square element in $\F_q$. 
We show that if $f$ is a permutation polynomial over $\F_q$, then every connected component of the graph has a Hamiltonian cycle.
Moreover, these Hamiltonian cycles can be used to construct balancing binary sequences.
By making computations for permutation polynomials $f$ of low degree, it appears that almost all these graphs are strongly connected,
and there are many Hamiltonian cycles in such a graph if it is connected.
\end{abstract}

\maketitle

\section{Introduction}

Let $\F_q$ be the finite field of $q$ elements, where $q$ is a power of some odd prime $p$.
Let $\F_q^*$ be the set of non-zero elements in $\F_q$.
For any polynomial $f \in \F_q[X]$,
we define the \textit{functional graph} of $f$ as a digraph on $q$ vertices
labelled by the elements of $\F_q$,  where
there is an edge from $x$ to $y$ if and only if $f(x) = y$.
These graphs have been extensively studied in recent years; see \cite{BuSch,FlGar,HS,KLMMSS,MSSS,OstSha,VaSha} and the references therein. 
The motivation for studying these graphs comes from several resources, 
such as Lucas-Lehmer primality test for  Mersenne numbers, Pollard's rho algorithm for integer factorization, pseudo-random number generators, 
and arithmetic dynamics.

In the above construction, we in fact use the equation $Y - f(X) = 0$.
Then, there is an edge from $x$ to $y$ if and only if $y-f(x)=0$.
Hence, more generally, for any equation over $\F_q$: 
$$
E(X,Y) = 0
$$
with variables $X$ and $Y$, we define a digraph by choosing the elements in $\F_q$ as vertices and drawing an edge from $x$ to $y$ if and only if
$E(x,y) = 0$. We call this graph an \textit{equational graph} of the above equation. 
However, it might happen that this equation has no solution over $\F_q$. 

We remark that clearly functional graph and equational graph can be defined similarly over finite fields of even characteristic. 
But in this paper we only consider finite fields of odd characteristic. 

In this paper, we consider equational graphs generated by equations of the form
\begin{equation}   \label{eq:equation}
(Y^2 - f(X))(\lambda Y^2 - f(X)) = 0
\end{equation} 
with variables $X$ and $Y$, 
where $f(X)$ is a fixed polynomial over $\F_q$ and  $\lambda$ is a fixed non-square element in $\F_q$. 
Then, there is an edge from $x$ to $y$ if and only if $(y^2 - f(x))(\lambda y^2 - f(x)) = 0$.
This yields an equational graph over $\F_q$, denoted by $\cG(\lambda,f)$; see Figure~\ref{fig:linear} for a simple example.
Since $\lambda$ is non-square, the out-degree of each vertex $x$ is positive, which in fact equals to $2$ if $f(x) \ne 0$ (because if there is an edge from $x$ to $y$,
then there is also an edge from $x$ to $-y$).
However, the in-degree of $x$ can be from zero to the degree of $f$. 
Note that we allow the graph $\cG(\lambda,f)$ to have loops. 

\begin{figure}[!htbp]
\begin{center}
\setlength{\unitlength}{1cm}
\begin{picture}(20,3.5)
\multiput(2.5,0)(2,0){2}{\circle{0.7}}
\multiput(2.5,3.2)(2,0){2}{\circle{0.7}}
\multiput(6.5,1.6)(2,0){3}{\circle{0.7}}

\put(2.35, 2.875){\vector(0,-1){2.55}}
\put(2.65, 0.325){\vector(0,1){2.55}}
\put(2.85, 0){\vector(1,0){1.3}}
\put(4.15, 3.2){\vector(-1,0){1.3}}
\put(4.5, 2.85){\vector(0,-1){2.5}}
\put(4.75, 0.25){\vector(3,2){1.57}}
\put(6.3, 1.9){\vector(-3,2){1.55}}
\put(6.835, 1.5){\vector(1,0){1.33}}
\put(8.165, 1.7){\vector(-1,0){1.33}}
\put(8.835, 1.5){\vector(1,0){1.33}}
\put(10.165, 1.7){\vector(-1,0){1.33}}
\put(4.85, 0){\vector(4,1){5.4}}
\put(10.3, 1.9){\vector(-4,1){5.45}}

\put(2.4,-0.15){$0$}
\put(4.4,-0.15){$1$}
\put(2.4,3.05){$6$}
\put(4.4,3.05){$2$}
\put(6.4,1.45){$4$}
\put(8.4,1.45){$5$}
\put(10.4,1.45){$3$}
\end{picture}
\end{center}
\caption{The equational graph $\cG(3,X+1)$ over $\F_7$}
\label{fig:linear}
\end{figure}
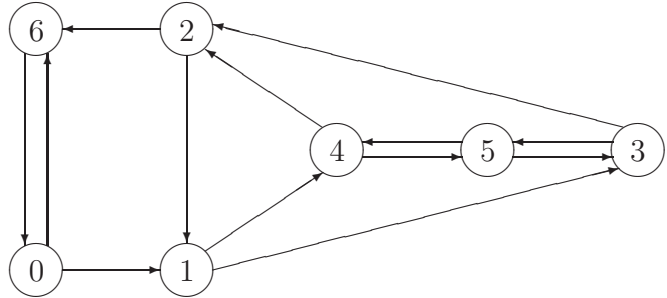

We show that if  $f$ is a permutation polynomial over $\F_q$,
then every (weakly) connected component of the graph $\cG(\lambda,f)$ is strongly connected (see Proposition~\ref{prop:perm-conn})
 and has a Hamiltonian cycle (see Theorem~\ref{thm:perm-Ha}).
By distinguishing the edges according to the subequations ($Y^2 - f(X)=0$ or $\lambda Y^2 - f(X)=0$) they come from,
we classify these Hamiltonian cycles (see Definition~\ref{def:type}) and show that there is no Hamiltonian cycle of Type 1 for many such graphs
(see Theorem~\ref{thm:HT1}, Corollary~\ref{cor:linear-HT1} and Theorem~\ref{thm:perm-HT1}).
Moreover, we prove that these Hamiltonian cycles can be used to construct balancing binary sequences by associating weights to the edges
(see Theorem~\ref{thm:perm-balance}).

Through making computations for permutation polynomials $f$ of low degree, it appears that almost all these graphs are strongly connected,
and each such connected graph has many Hamiltonian cycles.
That is, using these graphs we can frequently obtain balancing periodic sequences of period $q$.
Additionally, we also investigate the graphs $\cG(\lambda,f)$ with polynomials $f$ of low degree in more detail.

We remark that by construction, any graph $\cG(\lambda,f)$ with permutation polynomial $f$ 
is quite close to be a 2-regular digraph. 
The result in \cite[Theorem 5.1]{FF} implies that  almost every 2-regular digraph is strongly connected, 
and then the result in \cite[Theorem 1]{CF} suggests that almost every 2-regular digraph has a Hamiltonian cycle. 
We thus can view such graphs $\cG(\lambda,f)$ as typical examples for this. 

The paper is organized as follows:
Section~\ref{sec:perm} deals with the case when $f$ is a permutation polynomial over $\F_q$,
and the algorithms for the computations of its connectedness and Hamiltonian cycles are presented in Section~\ref{sec:alg}.
We then study the cases when $f$ is of degree 1, 2 and 3 in Sections~\ref{sec:linear}, \ref{sec:quad} and \ref{sec:cubic} respectively.
Finally we make some comments for further study.

\section{The case of permutation polynomials}
\label{sec:perm}

Here, in the graph $\cG(\lambda,f)$ we choose $f$ to be a permutation polynomial over $\F_q$.
That is, the map $x \mapsto f(x)$ is a bijection from $\F_q$ to itself.
We refer to \cite[Chapter 7]{LN} for an extensive introduction on permutation polynomials.

Recall that $q$ is odd, and $\lambda$ is a non-square element in $\F_q$.

\subsection{Basic properties}

First, it is easy to determine the in-degrees and out-degrees of the vertices in the graph $\cG(\lambda,f)$.

\begin{proposition}
\label{prop:perm-inout}
Let $f$ be a permutation polynomial over $\F_q$.
If $f(0) \ne 0$, then in the graph $\cG(\lambda,f)$,  the vertex $0$ has in-degree $1$ and out-degree $2$,
the vertex   $f^{-1}(0)$ has in-degree $2$ and out-degree $1$,
and any other vertex $x$ $(x \ne 0$ and $x \ne f^{-1}(0))$ has in-degree $2$ and out-degree $2$.
Otherwise, if $f(0) = 0$, then in the graph $\cG(\lambda,f)$,  the vertex $0$ has in-degree $1$ and out-degree $1$,
 and any other vertex $x$ $(x \ne 0)$ has in-degree $2$ and out-degree $2$.
\end{proposition}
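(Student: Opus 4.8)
The plan is to compute out-degrees and in-degrees separately by direct counting, relying on two facts: that $\lambda$ being a non-square forces exactly one of $f(x)$ and $f(x)/\lambda$ to be a nonzero square whenever $f(x)\ne 0$, and that the permutation property of $f$ controls the number of preimages of any prescribed value in $\F_q$.

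For the out-degree of a vertex $x$, I would count the $y\in\F_q$ satisfying $y^2=f(x)$ or $\lambda y^2=f(x)$, i.e. $y^2=f(x)$ or $y^2=f(x)/\lambda$. Suppose first $f(x)\ne 0$. Then both $f(x)$ and $f(x)/\lambda$ are nonzero, and since $\lambda^{-1}$ is a non-square, exactly one of them is a nonzero square (if $f(x)$ is a square then $f(x)/\lambda$ is a non-square, and conversely). The square one contributes its two distinct square roots while the non-square one contributes nothing, so the out-degree equals $2$. If instead $f(x)=0$, both equations collapse to $y^2=0$, whose only solution is $y=0$; hence the out-degree is $1$ and the unique out-neighbour is $0$. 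Because $f$ is a permutation polynomial, there is exactly one $x$ with $f(x)=0$, namely $x=f^{-1}(0)$, so $f^{-1}(0)$ is the only vertex of out-degree $1$ and every other vertex has out-degree $2$.

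For the in-degree of a vertex $y$, I would count the $x\in\F_q$ with $f(x)=y^2$ or $f(x)=\lambda y^2$. Here the bijectivity of $f$ does the work: each of the values $y^2$ and $\lambda y^2$ has exactly one $f$-preimage. These two preimages coincide precisely when $y^2=\lambda y^2$, i.e. $y^2(1-\lambda)=0$; since $\lambda$ is a non-square it differs from the square $1$, so this can only happen when $y=0$. Thus the vertex $0$ (where $y^2=\lambda y^2=0$) has a single preimage and in-degree $1$, while every $y\ne 0$ yields two distinct preimages and in-degree $2$.

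Finally I would assemble the two computations. If $f(0)\ne 0$ then $f^{-1}(0)\ne 0$, so $0$ has in-degree $1$ and out-degree $2$, the vertex $f^{-1}(0)$ has in-degree $2$ and out-degree $1$, and all remaining vertices have in-degree and out-degree $2$. If $f(0)=0$ then $f^{-1}(0)=0$, and the single vertex $0$ carries both the in-degree drop and the out-degree drop, giving in-degree $1$ and out-degree $1$, with all other vertices of in- and out-degree $2$. None of these steps is genuinely hard; the only point demanding care is the in-degree count, where one must observe that $y^2=\lambda y^2$ forces $y=0$ because $\lambda\ne 1$, which is exactly what singles out the in-degree-$1$ vertex from the rest.
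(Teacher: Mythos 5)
Your proof is correct and is exactly the straightforward counting argument the paper has in mind: the published proof consists of the single remark that one need only note $x \mapsto f(x)$ is a bijection, and your write-up simply fills in those details (exactly one of $f(x)$, $f(x)/\lambda$ is a nonzero square when $f(x) \ne 0$, and the two $f$-preimages of $y^2$ and $\lambda y^2$ coincide only at $y=0$). No discrepancy to report.
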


\begin{proof}
The proof is quite straightforward.
We only need to note that the map $x \mapsto f(x)$ gives a bijection from $\F_q$ to itself.
\end{proof}

Moreover,  each (weakly) connected component of the graph $\cG(\lambda,f)$ is strongly connected.

\begin{proposition}
\label{prop:perm-conn}
In each graph $\cG(\lambda,f)$ with permutation polynomial $f$, every vertex lies in a cycle, and also every edge lies in a cycle. 
In particular, every connected component is strongly connected.
\end{proposition}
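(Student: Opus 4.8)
The plan is to derive everything from the degree data already recorded in Proposition~\ref{prop:perm-inout}, together with one structural fact about the vertex $0$. The central tool will be an elementary balance principle: for any vertex $v$ of a finite digraph, the set $R(v)$ of vertices reachable from $v$ along directed paths (with $v\in R(v)$) admits no edge leaving it. Hence, summing in- and out-degrees separately over $R(v)$, the number of edges entering $R(v)$ from its complement equals $\sum_{w\in R(v)}(\deg^-(w)-\deg^+(w))$, where $\deg^-$ and $\deg^+$ denote in- and out-degree. I would isolate this as a one-line lemma, since it is exactly the bridge between the purely local degree information and the global reachability statement we want.

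The main step is then to show that every edge $(u,v)$ of $\cG(\lambda,f)$ lies on a cycle, which (for $u\neq v$; loops are already cycles) amounts to producing a directed path from $v$ back to $u$, i.e. to showing $u\in R(v)$. Supposing $u\notin R(v)$, the edge $(u,v)$ enters $S:=R(v)$ from outside, so the balance principle forces $\sum_{w\in S}(\deg^-(w)-\deg^+(w))\ge 1$. By Proposition~\ref{prop:perm-inout} the quantity $\deg^- - \deg^+$ vanishes at every vertex except, when $f(0)\ne 0$, at $f^{-1}(0)$ (where it equals $+1$) and at $0$ (where it equals $-1$); when $f(0)=0$ it vanishes identically. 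In the case $f(0)=0$ this already contradicts the inequality, and in the case $f(0)\ne 0$ the inequality forces $f^{-1}(0)\in S$ and $0\notin S$.

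Here is where the one extra structural fact enters, and I expect it to be the crux of the argument: the unique out-edge of $f^{-1}(0)$ runs to $0$, since $f(f^{-1}(0))=0$ forces any out-neighbour $y$ to satisfy $y^2=0$. As $S=R(v)$ is closed under out-edges, $f^{-1}(0)\in S$ would give $0\in S$, contradicting $0\notin S$. Thus $u\in R(v)$ in all cases, and every edge lies on a cycle. The remaining assertions follow quickly: each vertex has out-degree at least $1$ by Proposition~\ref{prop:perm-inout}, hence is the tail of some edge and so lies on a cycle; and since an edge lying on a cycle makes its two endpoints mutually reachable, mutual reachability is an equivalence relation whose classes contain every weakly connected component, so each such component is strongly connected.

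I would flag that all the genuine content is concentrated in the unbalanced case $f(0)\ne 0$: the two defects in the degree sequence cancel globally, but one must use the concrete edge $f^{-1}(0)\to 0$ to eliminate the single offending configuration of $S$. Everything else is routine bookkeeping layered on top of Proposition~\ref{prop:perm-inout}.
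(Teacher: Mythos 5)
Your proof is correct and runs on the same engine as the paper's: a degree-sum comparison over a successor-closed set, powered by Proposition~\ref{prop:perm-inout}, with the edge $f^{-1}(0)\to 0$ eliminating the only unbalanced configuration --- the same fact the paper uses implicitly when it observes that its successor set, once it avoids $0$, must also avoid $f^{-1}(0)$. Your bookkeeping is in fact slightly tidier: by working with the single reachability set $R(v)$ and the exact identity that the number of edges entering $S$ equals $\sum_{w\in S}\bigl(\deg^-(w)-\deg^+(w)\bigr)$, you dispense with the paper's second, predecessor-closed set $G_1$ and its ``without loss of generality $0\notin G_2$'' step, which in the paper tacitly requires running a mirrored count on $G_1$.
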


\begin{proof}
Let $C$ be an arbitrary connected component of the graph $\cG(\lambda,f)$.
For our purpose, it suffices to show that given an edge from $x$ to $y$ in $C$, there is a (directed) path from $y$ to $x$.
Notice that by Proposition~\ref{prop:perm-inout} the in-degree and out-degree of each vertex in the graph $\cG(\lambda,f)$ are both positive.
Then, starting from $x$, we draw the predecessors of $x$, and the predecessors of the predecessors of $x$, and so on; this gives a subgraph, say $G_1$.
While starting from $y$, we draw the successors of $y$, and the successors of the successors of $y$, and so on; this gives another subgraph, say $G_2$.
If $G_1$ and $G_2$ have a common vertex, then everything is done.
So, we only need to prove that these two subgraphs indeed have a common vertex.

Now, by contradiction, suppose that $G_1$ and $G_2$ have no common vertex.
Then, the edge from $x$ to $y$ is not in $G_1$ and also not in $G_2$. 
Without loss of generality, we can assume that $G_2$ does not contain the vertex $0$.
So, noticing $y \ne 0$ and $y \ne f^{-1}(0)$ and using Proposition~\ref{prop:perm-inout}, 
in $G_2$ the vertex $y$ has out-degree $2$ and in-degree at most $1$,
and any other vertex in $G_2$ has out-degree $2$ and  in-degree at most $2$.
Thus, the sum of out-degrees in $G_2$ is greater than the sum of  in-degrees. 
But in fact they must be equal. 
Hence, $G_1$ and $G_2$ indeed have a common vertex.
\end{proof}

The following proposition suggests that the graph $\cG(\lambda,f)$ can be complicated.

\begin{proposition}
\label{prop:perm-bi}
Each graph $\cG(\lambda,f)$ with permutation polynomial $f$ is not a bipartite graph.
\end{proposition}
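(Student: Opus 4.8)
The plan is to show that the underlying undirected graph of $\cG(\lambda,f)$ contains a cycle of odd length. First I would record the structural feature already exploited in Proposition~\ref{prop:perm-inout}: the two out-neighbours of a vertex $x$ with $f(x)\ne 0$ are $z$ and $-z$, where $z^2$ equals $f(x)$ or $f(x)/\lambda$ according as $f(x)$ is a square or a non-square. Writing $\phi(s)=s$ when $s$ is a square and $\phi(s)=s/\lambda$ when $s$ is a non-square, so that $\phi$ maps $\F_q$ onto the set $Q$ of squares, every edge of $\cG(\lambda,f)$ takes the form $x\to y$ with $y^2=\phi(f(x))$.

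The heart of the argument is to pass to an auxiliary multigraph $\Gamma$ on the vertex set $Q$, with one edge $\{x^2,\phi(f(x))\}$ for each $x\in\F_q$ (loops allowed). I would prove that $\cG(\lambda,f)$ is bipartite only if $\Gamma$ is. Indeed, a proper $2$-colouring $c$ of $\cG(\lambda,f)$ must satisfy $c(z)=c(-z)$ for every $z$: for $z\ne 0$, since $f$ is a bijection there is an $x$ with $\phi(f(x))=z^2$, so $z$ and $-z$ are simultaneous out-neighbours of $x$ and both receive the colour opposite to $c(x)$. Hence $c$ descends to a colouring $d$ of $Q$ via $d(z^2)=c(z)$, and the edge $x\to \sqrt{\phi(f(x))}$ forces $d(x^2)\ne d(\phi(f(x)))$; that is, $d$ properly $2$-colours $\Gamma$.

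It then remains to show $\Gamma$ is not bipartite, and here I would use a parity argument. A direct count gives that $\Gamma$ has exactly $q$ edges, while all its degrees are even: each nonzero square has degree $4$ (two preimages under squaring and two under $\phi\circ f$, using that $f$ is a bijection and $\phi$ is $2$-to-$1$ over the nonzero squares), and $0$ has degree $2$. Thus every connected component of $\Gamma$ is Eulerian, and since the total number of edges $q$ is odd, some component $\Gamma_0$ has an odd number of edges. An Eulerian circuit of $\Gamma_0$ is a closed walk of odd length, which necessarily contains an odd cycle; therefore $\Gamma$ contains an odd cycle and is not bipartite. Combined with the previous step, this shows $\cG(\lambda,f)$ is not bipartite.

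The main obstacle, and the step that makes the approach work, is finding the right reduction: the raw in/out-degree bookkeeping on the directed graph is awkward because of anti-parallel edges, whereas quotienting the targets by $z\mapsto -z$ collapses everything onto the squares and turns the problem into the clean statement that an even-degree multigraph with an odd number of edges has an odd cycle. I would also note the degenerate case $f(0)=0$ as a sanity check: there the edge arising from $x=0$ is the loop $\{0,0\}$ in $\Gamma$ (equivalently the loop $0\to 0$ in $\cG(\lambda,f)$), which already exhibits an odd cycle.
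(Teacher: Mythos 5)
Your proposal is correct, but it takes a genuinely different route from the paper. The paper's proof is a short double-counting argument directly on the digraph: assuming a bipartition $S_1 \cup S_2$ with $f^{-1}(0) \in S_1$ and $0 \in S_2$, it compares the sum of out-degrees over $S_1$ with the sum of in-degrees over $S_2$ (both computed from Proposition~\ref{prop:perm-inout}), obtaining $2(|S_1|-1)+1 = 2(|S_2|-1)+1$, hence $|S_1|=|S_2|$ and $q$ even, a contradiction. You instead first show that any proper $2$-colouring must satisfy $c(z)=c(-z)$ (using that $\pm z$ are simultaneous out-neighbours of $f^{-1}(z^2)$, which again rests on $f$ being a bijection), quotient by $z \mapsto -z$ onto the set $Q$ of squares, and then run the parity argument on the auxiliary multigraph $\Gamma$: all degrees even and $q$ (odd) edges force a component with an Eulerian circuit of odd length, hence an odd closed walk and non-bipartiteness. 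I checked the details and they hold: the edge relation is indeed $y^2=\phi(f(x))$, the degree counts ($4$ at nonzero squares, $2$ at $0$, with the usual convention that a loop contributes $2$) are right because $f$ is a bijection and $\phi$ is $2$-to-$1$ over nonzero squares, and the logic of the reduction (bipartite $\cG(\lambda,f)$ implies bipartite $\Gamma$) is sound, including the degenerate case $f(0)=0$ where a loop appears. Both arguments ultimately exploit the same two ingredients --- the regularity supplied by Proposition~\ref{prop:perm-inout} and the oddness of $q$ --- but the paper's proof is shorter and more elementary, while yours buys additional structure: the quotient multigraph $\Gamma$ on $Q$ cleanly absorbs the anti-parallel edge pairs $x \to \pm z$ and could plausibly be reused to study, say, the connected components of $\cG(\lambda,f)$, which the paper's bipartition count does not illuminate.
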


\begin{proof}
By contradiction, assume that the graph $\cG(\lambda,f)$ over $\F_q$ is a bipartite graph.
Then, the vertex set can be separated into two subsets, say $S_1$ and $S_2$, such that
there are no edges among the vertices in $S_1$ and also there are no edges among the vertices in $S_2$. 
So, there are no loops in $\cG(\lambda,f)$, which implies $f(0) \ne 0$. 
Without loss of generality, we assume that $f^{-1}(0) \in S_1$ and $0 \in S_2$.
Let $m=|S_1|$ and $n=|S_2|$.
Then, by Proposition~\ref{prop:perm-inout}, the sum of out-degrees of the vertices in $S_1$ is equal to $2(m-1)+1$,
and the sum of in-degrees of the vertices in $S_2$ is equal to $2(n-1)+1$.
By assumption, we must have
$$
2(m-1)+1 = 2(n-1)+1,
$$
which implies that $m=n$, and so $m+n$ is an even integer.
However, $m+n =q$, and $q$ is odd.
This leads to a contradiction.
So, $\cG(\lambda,f)$ is not a bipartite graph.
\end{proof}

\subsection{Existence of Hamiltonian cycles}

For each graph $\cG(\lambda,f)$ over $\F_q$ with permutation polynomial $f$,
by Propositions~\ref{prop:perm-inout} and \ref{prop:perm-conn} its connected components are close to be 
strongly connected $2$-regular digraphs except at the vertex $0$ when $f(0)=0$.
Note that not every strongly connected $2$-regular digraph (even without loops) has a Hamiltonian cycle; 
see, for example,  \cite[Corollary 3.8.2]{GR}.
However, for the graph $\cG(\lambda,f)$, its connected components all have Hamiltonian cycles.

\begin{theorem}
\label{thm:perm-Ha}
In each graph $\cG(\lambda,f)$ over $\F_q$ with permutation polynomial $f$,  every connected component has a Hamiltonian cycle.
\end{theorem}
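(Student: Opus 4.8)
The plan is to understand the precise local structure of the graph $\cG(\lambda,f)$ and then reduce the problem to a clean statement about $2$-regular digraphs. By Proposition~\ref{prop:perm-inout}, away from the vertex $0$ every vertex has in-degree $2$ and out-degree $2$, so the only obstruction to $\cG(\lambda,f)$ being genuinely $2$-regular is the exceptional behaviour at $0$. I would therefore first treat the two cases of Proposition~\ref{prop:perm-inout} separately. When $f(0)=0$ the vertex $0$ is a loop with in- and out-degree $1$; this loop is forced to be used by any Hamiltonian cycle, so I can contract it away and work with the remaining vertices, which form a $2$-regular digraph. When $f(0)\neq0$, the vertices $0$ and $f^{-1}(0)$ have degree sequences $(1,2)$ and $(2,1)$; here the idea is to \emph{repair} the graph to a $2$-regular one by adding a single auxiliary edge from $f^{-1}(0)$ (out-degree $1$) to $0$ (in-degree $1$), so that every vertex becomes exactly $2$-in $2$-out.

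\medskip

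Once the graph is massaged into a strongly connected $2$-regular digraph, I would invoke the classical fact that a connected $2$-in $2$-out digraph admits an \emph{Eulerian circuit}, and pass from the Eulerian structure to a Hamiltonian one via a line-graph or decomposition argument. The cleaner route, which I expect to be the one intended here, is to use the specific structure of the edges in $\cG(\lambda,f)$: the two out-edges from each $x$ go to $y$ and $-y$ where $y^2=f(x)$ (resp. $\lambda y^2=f(x)$), so the successors come in $\pm$ pairs. I would exploit this $\pm$ symmetry together with the bijectivity of $f$ to build an explicit permutation of $\F_q$ whose functional graph is a single cycle realizable by edges of $\cG(\lambda,f)$. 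Concretely, one defines a successor function $\sigma:\F_q\to\F_q$ selecting, at each vertex, one of its two out-edges, in such a way that $\sigma$ is a bijection (this uses that the in-degrees are also $2$, so the selection can be matched up), and then one shows the chosen edges assemble into a single cycle rather than a union of several disjoint cycles.

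\medskip

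\textbf{The main obstacle} is precisely this last point: constructing a selection of out-edges that is simultaneously a bijection \emph{and} yields one single cycle covering all of $\F_q$, rather than breaking into several vertex-disjoint cycles. Guaranteeing bijectivity is a matching/Hall-type condition that follows from the in-degree/out-degree balance established in Proposition~\ref{prop:perm-inout}; the genuinely delicate part is connectivity of the resulting permutation. Here I would lean on Proposition~\ref{prop:perm-conn}, which already tells us every edge lies in a cycle and every component is strongly connected, and I would argue that if a chosen $2$-factor decomposes into more than one cycle, then the $\pm$-symmetry of the out-edges lets me perform a local swap (exchanging which of the paired edges $\,x\to y\,$ or $\,x\to -y\,$ is selected) that merges two cycles sharing an appropriate vertex. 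Iterating such merges reduces the number of cycles until only one remains, giving the Hamiltonian cycle.

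\medskip

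An alternative I would keep in reserve, in case the explicit merging argument becomes unwieldy, is to appeal directly to a known sufficient condition for Hamiltonicity of $2$-regular (or near-$2$-regular) strongly connected digraphs and to verify its hypotheses for $\cG(\lambda,f)$; the excerpt already flags via \cite[Corollary 3.8.2]{GR} that no such condition holds for \emph{all} strongly connected $2$-regular digraphs, so any such result must use the extra arithmetic structure coming from $f$ being a permutation and from the $\pm$-pairing of successors. For this reason I expect the self-contained combinatorial merging argument sketched above to be the approach the authors actually take, with the non-bipartiteness from Proposition~\ref{prop:perm-bi} possibly entering to rule out the parity obstruction to a single cycle.
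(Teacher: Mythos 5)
Your plan is correct in substance, but it takes a genuinely different route from the paper. The paper argues by maximality: it chooses a maximal cycle $M$ (forced to pass through $0$ when $0$ lies in the component), assumes $M$ is not Hamiltonian, and uses Proposition~\ref{prop:perm-conn} (every edge lies in a cycle) to produce a sequence of vertices $x_0,x_1,x_2,\ldots$ on $M$; by pigeonhole two of them coincide, and the resulting configuration of the pairs $y_i,-y_i,z_i$ lets the authors reroute and enlarge $M$, a contradiction. You instead build a spanning $2$-factor and merge its cycles by local swaps, and both halves of this plan go through, with two details to repair. First, no auxiliary edge is needed when $f(0)\ne 0$: the edge $f^{-1}(0)\to 0$ already exists in $\cG(\lambda,f)$ and is forced into every $2$-factor, being the unique out-edge of $f^{-1}(0)$ and the unique in-edge of $0$; removing these two vertices from the out/in incidence bipartite graph leaves it $2$-regular, so a perfect matching (hence the $2$-factor) exists by K\"onig's theorem, confirming your Hall-type step. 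Second, your merging step should be stated via the rectangles that also drive Theorem~\ref{thm:perm-balance}: for $y\ne 0$, the vertices $y$ and $-y$ have the same two in-neighbours $x=f^{-1}(y^2)$ and $z=f^{-1}(\lambda y^2)$, and any $2$-factor $\sigma$ uses exactly one of the two perfect matchings of this $K_{2,2}$; exchanging them replaces $\sigma$ by $\sigma\circ(x\,z)$, which merges two cycles precisely when $x$ and $z$ lie in \emph{different} cycles of $\sigma$ --- vertex-disjoint cycles cannot literally ``share a vertex'' as you wrote, so this is the condition you must verify. It always holds somewhere when $\sigma$ has at least two cycles: weak connectedness of the component gives a graph edge $x\to y$ crossing between two cycles; it cannot be the forced edge into $0$, so $y\ne 0$, and then $\sigma(x)=-y$ lies in the cycle of $x$ while $z=\sigma^{-1}(y)$ lies in the cycle of $y$, so the swap at this rectangle is available and both replacement edges $x\to y$, $z\to -y$ exist. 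Iterating yields the Hamiltonian cycle. Comparing the two proofs: your argument needs only weak connectivity and Proposition~\ref{prop:perm-inout} (neither Proposition~\ref{prop:perm-conn} nor Proposition~\ref{prop:perm-bi} is required --- there is no parity obstruction, so non-bipartiteness plays no role), at the cost of invoking matching theory; the paper's maximal-cycle argument is self-contained but leans essentially on Proposition~\ref{prop:perm-conn}. Your Eulerian-circuit fallback, by contrast, has no workable mechanism for passing from an Euler tour to a Hamiltonian cycle, but you rightly set it aside in favour of the swap argument.
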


\begin{proof}
Let $C$ be an arbitrary connected component of the graph $\cG(\lambda,f)$.
By contradiction, suppose that $C$ has no Hamiltonian cycle.
Then, by Proposition~\ref{prop:perm-conn}, we choose a maximal cycle, say $M$, in $C$ such that
if the vertex $0$ lies in $C$, then $0$ also lies in $M$.
Note that by the maximal assumption, the cycle $M$ can not be  enlarged.

First, since $C$ has no Hamiltonian cycle, the cycle $M$ does not go through all the vertices of $C$.
Then, at least one of the vertices in $M$ has a successor not in $M$; see Figure~\ref{fig:perm-Ha1}.
In Figure~\ref{fig:perm-Ha1}, the vertex $y_0$ is a successor of the vertex $x_0$ and is outside of the cycle $M$.
Note that if the vertex $0$ lies in $C$, then it also lies in $M$.
So, we have $y_0 \ne 0$, and thus the in-degree of $y_0$ is $2$.
By construction and noticing that the out-degree of each vertex is at most two,
the vertex $y_0$ must have a predecessor  not in $M$, say $z_0$  (because there is an edge from $z_0$ to $-y_0$).
In fact, either $x_0=f^{-1}(y_0^2), z_0 = f^{-1}(\lambda y_0^2) $, or $x_0=f^{-1}(\lambda y_0^2), z_0 = f^{-1}(y_0^2)$.

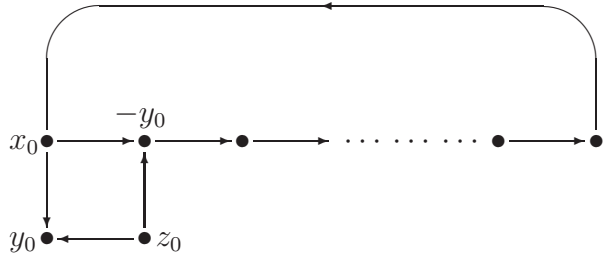
\begin{figure}[!htbp]
\begin{center}
\setlength{\unitlength}{1cm}
\begin{picture}(20,3)
\multiput(2.5,1)(1.3,0){3}{$\bullet$}
\multiput(2.75, 1.1)(1.3,0){3}{\vector(1,0){1}}
\multiput(6.55,1)(0.65,0){3}{$\cdots$}
\multiput(8.5,1)(1.3,0){2}{$\bullet$}
\put(8.75, 1.1){\vector(1,0){1}}
\put(6.25,1.25){\oval(7.3,3.3)[t]}
\put(6.25, 2.9){\vector(-1,0){0}}

\multiput(2.5,-0.3)(1.3,0){2}{$\bullet$}
\put(2.6, 0.95){\vector(0,-1){1}}
\put(3.9, -0.05){\vector(0,1){1}}
\put(3.75, -0.2){\vector(-1,0){1}}
\put(2.1,1){$x_0$}
\put(3.5,1.35){$-y_0$}
\put(2.1,-0.3){$y_0$}
\put(4.05,-0.3){$z_0$}
\end{picture}
\end{center}
\caption{The cycle $M$}
\label{fig:perm-Ha1}
\end{figure}

Now, in Figure~\ref{fig:perm-Ha1}, by Proposition~\ref{prop:perm-conn}, the edge from $z_0$ to $y_0$
lies in a cycle, say $C_1$. If this cycle does not intersect with $M$, then we can emerge the cycles $M$ and $C_1$
by dropping the edge from $x_0$ to $-y_0$ and the edge from $z_0$ to $y_0$.
This gives a larger cycle, but this contradicts with the maximal assumption on $M$.
So, the cycle $C_1$ must intersect with $M$.
Then, there must exist a vertex, say $x_1$, in $C_1$ and also in $M$ such that along the cycle $C_1$ the path from $x_1$ to $y_0$
does not intersect with $M$ except the vertex $x_1$; see Figure~\ref{fig:perm-Ha2} for example.
As the above, $y_1 \ne 0$, and the vertex $z_1$ does not lie in $M$.

\begin{figure}[!htbp]
\begin{center}
\setlength{\unitlength}{1cm}
\begin{picture}(20,3)
\multiput(2.5,1)(1.3,0){2}{$\bullet$}
\multiput(2.75, 1.1)(1.3,0){2}{\vector(1,0){1}}
\multiput(5.2,1)(0.65,0){2}{$\cdots$}
\multiput(6.5,1)(1.3,0){2}{$\bullet$}
\put(6.75, 1.1){\vector(1,0){1}}
\put(8.05, 1.1){\vector(1,0){1}}
\multiput(9.25,1)(0.65,0){2}{$\cdots$}
\multiput(10.5,1)(1.3,0){2}{$\bullet$}
\put(10.75, 1.1){\vector(1,0){1}}
\put(7.25,1.25){\oval(9.3,3.3)[t]}
\put(7.25, 2.9){\vector(-1,0){0}}

\multiput(2.5,-0.3)(1.3,0){2}{$\bullet$}
\put(2.6, 0.95){\vector(0,-1){1}}
\put(3.9, -0.05){\vector(0,1){1}}
\put(3.75, -0.2){\vector(-1,0){1}}
\put(2.1,1){$x_0$}
\put(3.5,1.35){$-y_0$}
\put(2.1,-0.3){$y_0$}
\put(4.05,0){$z_0$}

\multiput(6.5,-0.3)(1.3,0){2}{$\bullet$}
\put(6.6, 0.95){\vector(0,-1){1}}
\put(7.9, -0.05){\vector(0,1){1}}
\put(7.75, -0.2){\vector(-1,0){1}}
\put(6.45, -0.2){\vector(-1,0){1}}
\multiput(4.15,-0.3)(0.65,0){2}{$\cdots$}
\put(6.5,1.35){$x_1$}
\put(7.5,1.35){$-y_1$}
\put(6.75,0){$y_1$}
\put(8.05,0){$z_1$}
\end{picture}
\end{center}
\caption{Going through the procedure}
\label{fig:perm-Ha2}
\end{figure}
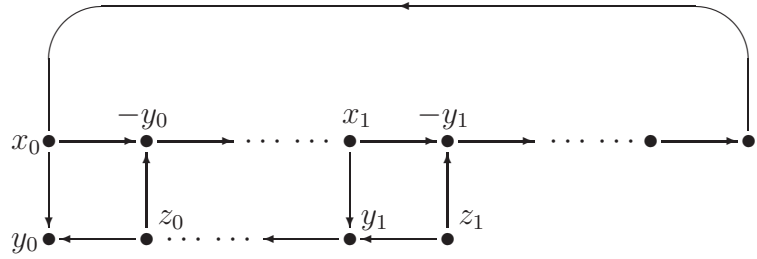

We then go through the above procedure again and again, and thus we obtain an infinite sequence of vertices in $M$: $x_0, x_1, x_2, \ldots$. 
For example, in Figure~\ref{fig:perm-Ha2}, by Proposition~\ref{prop:perm-conn}, the edge from $z_1$ to $y_1$
lies in a cycle, say $C_2$.
As the above, the cycle $C_2$ must intersect with $M$,
and there must exist a vertex, say $x_2$, in $C_2$ and also in $M$ such that along the cycle $C_2$ the path from $x_2$ to $y_1$
does not intersect with $M$ except the vertex $x_2$.
Then, we draw vertices $y_2,-y_2,z_2$ and the edges among them as before (note that $y_2 \ne 0$, and $z_2$ does not lie in $M$).

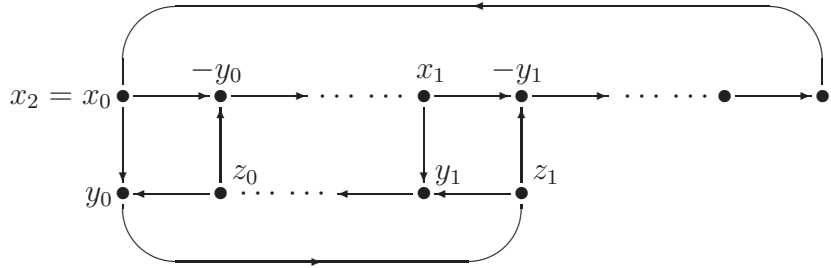
\begin{figure}[!htbp]
\begin{center}
\setlength{\unitlength}{1cm}
\begin{picture}(20,3.5)
\multiput(2.5,1.8)(1.3,0){2}{$\bullet$}
\multiput(2.75, 1.9)(1.3,0){2}{\vector(1,0){1}}
\multiput(5.2,1.8)(0.65,0){2}{$\cdots$}
\multiput(6.5,1.8)(1.3,0){2}{$\bullet$}
\put(6.75, 1.9){\vector(1,0){1}}
\put(8.05, 1.9){\vector(1,0){1}}
\multiput(9.25,1.8)(0.65,0){2}{$\cdots$}
\multiput(10.5,1.8)(1.3,0){2}{$\bullet$}
\put(10.75, 1.9){\vector(1,0){1}}
\put(7.25,2.05){\oval(9.3,2.1)[t]}
\put(7.25, 3.1){\vector(-1,0){0}}

\multiput(2.5,0.5)(1.3,0){2}{$\bullet$}
\put(2.6, 1.75){\vector(0,-1){1}}
\put(3.9, 0.75){\vector(0,1){1}}
\put(3.75, 0.6){\vector(-1,0){1}}
\put(1.1,1.8){$x_2=x_0$}
\put(3.5,2.15){$-y_0$}
\put(2.1,0.5){$y_0$}
\put(4.05,0.8){$z_0$}

\multiput(6.5,0.5)(1.3,0){2}{$\bullet$}
\put(6.6, 1.75){\vector(0,-1){1}}
\put(7.9, 0.75){\vector(0,1){1}}
\put(7.75, 0.6){\vector(-1,0){1}}
\put(6.45, 0.6){\vector(-1,0){1}}
\multiput(4.15,0.5)(0.65,0){2}{$\cdots$}
\put(6.5,2.15){$x_1$}
\put(7.5,2.15){$-y_1$}
\put(6.75,0.8){$y_1$}
\put(8.05,0.8){$z_1$}
\put(5.25,0.45){\oval(5.3,1.5)[b]}
\put(5.25, -0.3){\vector(1,0){0}}
\end{picture}
\end{center}
\caption{The case $x_2=x_0$}
\label{fig:perm-Ha3}
\end{figure}

Since $M$ is a finite cycle, we must have $x_i = x_j$ for some integers $i, j \ge 0$. 
Without loss of generality, we assume  $x_2=x_0$. Then, the picture looks like Figure~\ref{fig:perm-Ha3}. 
By going through the edges from $x_1$ to $y_1$, to $z_0$, to $y_0$, to $z_1$ and then to $-y_1$, we can enlarge the cycle $M$.
This contradicts with the maximal assumption on $M$. 
Therefore, the connected component $C$ indeed has a Hamiltonian cycle.
\end{proof}

We remark that  the result in Theorem~\ref{thm:perm-Ha} can hold
 for some non-permutation polynomials, such as $f(X)=X^2$.

\begin{remark}
For a connected graph $\cG(\lambda,f)$ over $\F_q$,
by Theorem~\ref{thm:perm-Ha} there is a Hamiltonian cycle travelling through all the $q$ vertices,
and then outputing the vertices along the Hamiltonian cycle can give a pseudo-random number generator.
\end{remark}

\subsection{Classification of Hamiltonian cycles}

Recall that the edges of a graph $\cG(\lambda,f)$ come from either $Y^2=f(X)$ or $\lambda Y^2 =  f(X)$.
Using this we can classify the Hamiltonian cycles of connected components of $\cG(\lambda,f)$. 

We first associate weights to the edges in $\cG(\lambda,f)$. 

\begin{definition}  \label{def:weight}
For any edge $(x, y)$ in $\cG(\lambda,f)$, if the edge comes from the relation $y^2 = f(x)$, then its weight is $0$, and otherwise its weight is $1$. 
In particular, the edge going to the vertex $0$ has weight $0$. 
\end{definition}

We now can classify the (directed) paths and Hamiltonian cycles in $\cG(\lambda,f)$. 

\begin{definition}  \label{def:type}
A trail  in $\cG(\lambda,f)$ is a path with all the edges of the same weight. 
A path in $\cG(\lambda,f)$ is said to be of Type $n$ ($n$ is a positive integer)   
if it contains a trail of length $n$ but it contains no trail of length greater than $n$. 
Then, a Hamiltonian cycle $H$ of a connected component in  $\cG(\lambda,f)$ is said to be of Type $n$ 
if $H \setminus \{0\}$ is a path of Type $n$. 
\end{definition}

In Definition \ref{def:type}, we exclude the edge going to the vertex $0$,
because it can be viewed from both $Y^2=f(X)$ and $\lambda Y^2=  f(X)$.

For any polynomial $f \in \F_q[X]$, denote by $\cV(f)$ the value set of $f$, that is,
$$
\cV(f) = \{f(a): \, a\in \F_q\}.
$$
We now want to find a large class of connected graphs $\cG(\lambda,f)$ which do not have
Hamiltonian cycles of Type 1.

\begin{theorem}  \label{thm:HT1}
Let $f \in \F_q[X]$ be a permutation polynomial.
Suppose that  the graph $\cG(\lambda,f)$ is connected,
$$
|\{(f^{-1}(a^2))^2:\,  a \in \F_q\}| \ne \frac{q-1}{2}
$$
and
$$
|\{(f^{-1}(\lambda a^2))^2:\,  a \in \F_q\}| \ne \frac{q-1}{2}
$$
Then,  the graph $\cG(\lambda,f)$ has no Hamiltonian cycle of Type 1.
\end{theorem}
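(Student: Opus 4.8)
The plan is to exploit a special feature of $\cG(\lambda,f)$: for every vertex $x$ the two out-edges carry the \emph{same} weight. Indeed, $y^2=f(x)$ is solvable exactly when $f(x)$ is a square, and $\lambda y^2=f(x)$ exactly when $f(x)$ is a nonsquare (as $\lambda$ is a nonsquare); in the first case both out-edges have weight $0$, in the second both have weight $1$, and when $f(x)=0$ the unique out-edge to $0$ has weight $0$. Thus the weight of an edge with source $v$ is $0$ iff $f(v)$ is a square (or $0$). Dually, for each $y\neq0$ the weight-$0$ in-edge of $y$ comes from $f^{-1}(y^2)$ and the weight-$1$ in-edge from $f^{-1}(\lambda y^2)$, and these two predecessors are distinct since $\lambda\neq1$. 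Writing $x_0=f^{-1}(0)$, $S=\{x:f(x)\text{ is a nonzero square}\}$ and $N=\{x:f(x)\text{ is a nonsquare}\}$, I first record that connectedness forces $f(0)\neq0$ (otherwise $0$ is an isolated loop), so $x_0\neq0$ and $\F_q=S\sqcup N\sqcup\{x_0\}$ with $|S|=|N|=\tfrac{q-1}{2}$.

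Next I would translate the two hypotheses. Using that the squaring map has fibres $\{a,-a\}$, one gets $|\{x^2:x\in A\}|=|A|-s(A)$ for any $A\subseteq\F_q$, where $s(A)$ counts the pairs $\{a,-a\}$ with $a\neq0$ lying entirely in $A$. Since $\{f^{-1}(a^2):a\in\F_q\}=W_0:=S\cup\{x_0\}$ and $\{f^{-1}(\lambda a^2):a\in\F_q\}=W_1:=N\cup\{x_0\}$, both of size $\tfrac{q+1}{2}$, the two hypotheses read exactly $s(W_0)\neq1$ and $s(W_1)\neq1$. The goal thus becomes: a Type-$1$ Hamiltonian cycle forces $s(W_0)=1$ or $s(W_1)=1$.

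Now suppose a Type-$1$ Hamiltonian cycle $0=v_0\to v_1\to\cdots\to v_{q-1}\to v_0$ exists, and let $\psi$ be its predecessor map. By definition of Type $1$ the weights of the path $v_1\to\cdots\to v_{q-1}$ alternate; counting the weight-$1$ edges (there are $|N|=\tfrac{q-1}{2}$ of them in the full cycle, and the two edges at $0$ are both weight $0$ when $f(0)$ is a square, one of each otherwise) pins down the alternating pattern and splits the argument into the case $f(0)$ a square and $f(0)$ a nonsquare; I would treat the square case and obtain the other by the symmetric swap of $W_0,W_1$. In the square case, combining the alternation rule ``in-weight $=1-$ out-weight'' with the in-edge description yields the closed form $\psi(y)=f^{-1}(y^2)$ for $y\in N$ and $\psi(y)=f^{-1}(\lambda y^2)$ for $y\in W_0$, valid for \emph{all} $y\in\F_q$ once one checks it at the three exceptional vertices $0,\,v_1,\,x_0$ (here $v_1^2=f(0)$ and the in-edge of $x_0$ has weight $1$). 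Since $\psi$ is a bijection, its restriction to $W_0$ via $y\mapsto f^{-1}(\lambda y^2)$ is injective, so $W_0$ contains no pair $\{a,-a\}$; as $x_0\in W_0$ this forces $-x_0\in N$. Then $\{x_0,-x_0\}\subseteq W_1$, while injectivity of $\psi$ on $N$ shows $N$ has no such pair, so $\{x_0,-x_0\}$ is the \emph{only} one in $W_1$; that is $s(W_1)=1$, contradicting the second hypothesis.

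I expect the main obstacle to be the bookkeeping that upgrades the alternation rule (which a priori holds only at the interior vertices $v_2,\dots,v_{q-2}$) to the global closed form for $\psi$: this requires fixing the start/end of the alternating pattern via the edge count, verifying the formula at the endpoints $v_1,\,x_0$ and at the removed vertex $0$, and confirming that the two displayed hypotheses match the two parity cases for $f(0)$. Once the closed form is in place, the decisive and clean step is that $x_0$ and $-x_0$ share the same pair of predecessors $\{f^{-1}(x_0^2),f^{-1}(\lambda x_0^2)\}$, which is exactly what converts injectivity of $\psi$ into the equality $s(W_0)=1$ or $s(W_1)=1$.
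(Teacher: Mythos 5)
Your proof is correct, and it takes a genuinely different route from the paper's. The paper passes to the inverse permutation $g$ of $f$ and reverses all edges, so that a Type-1 Hamiltonian cycle yields an alternating path on $q$ vertices in the equational graph of $(Y-g(X^2))(Y-g(\lambda X^2))=0$; the alternation forces this path to have the iterated form $x,\,g(x^2),\,h(x),\,g(h(x)^2),\dots$ with $h(X)=g(\lambda g(X^2)^2)$ (or $u(X)=g(g(\lambda X^2)^2)$, depending on the first edge), whence at most $2|\cV(h)|+2$ vertices, and the hypotheses (after excluding the extremal value $(q+1)/2$) give $|\cV(h)|,|\cV(u)|\le (q-3)/2$, i.e.\ at most $q-1<q$ vertices. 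You instead stay in $\cG(\lambda,f)$, translate the two hypotheses exactly into the pair-counting conditions $s(W_0)\ne 1$ and $s(W_1)\ne 1$, and use alternation plus the weight-$1$ edge count $|N|=(q-1)/2$ to pin down the pattern ($1,0,\dots,1$ when $f(0)$ is a square, $0,1,\dots,0$ otherwise) and hence an exact closed form for the predecessor permutation $\psi$; I checked the three exceptional vertices $0$, $v_1$, $x_0$ in both parity cases and the form is valid everywhere, after which injectivity of $\psi$ on $W_0$ (or $W_1$) and the pivot observation that $x_0$ and $-x_0$ share the same predecessor pair deliver $s(W_1)=1$ (resp.\ $s(W_0)=1$), the desired contradiction. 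Your route buys two things: a sharper conclusion than Theorem~\ref{thm:HT1} --- a Type-1 Hamiltonian cycle forces $(s(W_0),s(W_1))=(0,1)$ or $(1,0)$ according to the quadratic character of $f(0)$, so a weaker case-dependent hypothesis already suffices --- and robustness at the boundary, since the paper's exclusion of $|\cV(g(X^2)^2)|=(q+1)/2$ deduces $g(0)=0$ from \emph{either} value set being extremal, whereas that deduction appears to need \emph{both} (one extremal set only yields $0\in\cV(g(X^2))$, i.e.\ $f(0)$ a square); indeed your exact analysis shows the extremal case $s(W_0)=0$ is precisely what a Type-1 cycle with $f(0)$ square would produce, so the contradiction genuinely lives in the other set, exactly where you place it. What the paper's value-set method buys in exchange is quantitative flexibility: replacing exact pair counts by value-set estimates for $(X^2-a)^2$ yields the explicit Type-1 path-length bounds of Theorem~\ref{thm:linear-P} and Theorem~\ref{thm:perm-HT1}, which your pair-counting argument does not directly produce.
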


\begin{proof}
Since $f \in \F_q[X]$ is a permutation polynomial,
there  is a permutation polynomial $g \in \F_q[X]$ such that
both $f(g(X))$ and $g(f(X))$ induce the identity map from $\F_q$ to itself.
By assumption,
\begin{equation}  \label{eq:Vg1}
|\cV (g(X^2)^2)| \ne \frac{q-1}{2}, \qquad |\cV(g(\lambda X^2)^2)| \ne \frac{q-1}{2}.
\end{equation}

Since the graph $\cG(\lambda,f)$ is connected, by Theorem~\ref{thm:perm-Ha} it has a Hamiltonian cycle.
Let $H$ be an arbitrary Hamiltonian cycle of $\cG(\lambda,f)$.
We prove the desired result by contradiction.
Suppose that $H$ is of Type 1.
Then, along the cycle $H$, we obtain a path $P$ of Type 1 containing $q$ vertices and from the vertex $0$ to the vertex $g(0)$.
So, there are no two consecutive edges in $P$  having the same weight.

Clearly, $|\cV (g(X^2)^2)| \le (q+1)/2$ and $|\cV (g(\lambda X^2)^2)| \le (q+1)/2$.
If either $|\cV (g(X^2)^2)| = (q+1)/2$ or $|\cV (g(\lambda X^2)^2)| = (q+1)/2$,
then in view of $\cV (g(X^2)) \cap \cV (g(\lambda X^2))=\{g(0)\}$
and $\cV (g(X^2)) \cup \cV (g(\lambda X^2))=\F_q$, we must have $g(0)=0$.
This contradicts the fact $g(0) \ne 0$.
So, noticing \eqref{eq:Vg1} we must have
\begin{equation}  \label{eq:Vg2}
|\cV (g(X^2)^2)| \le \frac{q-3}{2}, \qquad |\cV(g(\lambda X^2)^2)| \le \frac{q-3}{2}.
\end{equation}

Note that by reversing the directions of all the edges in the graph $\cG(\lambda,f)$, 
we can see that this exactly gives the equational graph, say $\cG^\prime (\lambda,g)$, generated by the equation 
$$
(Y-g(X^2))(Y-g(\lambda X^2))=0.
$$
Then, the path $P$ in $\cG(\lambda,f)$ corresponds to a path, say $P^\prime$, in $\cG^\prime (\lambda,g)$. 
So, $P^\prime$ is from the vertex $g(0)$ to the vertex $0$,
and also $P^\prime$ has $q$ vertices.

Now, let $x=g(0)$, and let $(x, y)$ be the first edge in the path $P^\prime$. 
Clearly, there are two cases depending on whether $y=g(x^2)$ or $y=g(\lambda x^2)$. 

We first consider the case that $y=g(x^2)$. 
Define the polynomial $h(X) = g(\lambda g(X^2)^2)$.
Then, the path $P^\prime$ is of the form in Figure \ref{fig:perm-P1}, where $x=g(0)$.
So, from Figure \ref{fig:perm-P1}, in this case the number of vertices in $P^\prime$ is at most
$$
2|\cV(h)| + 2 \le q-1,
$$
where the inequality follows from $|\cV(h)|=|\cV (g(X^2)^2)|$ and \eqref{eq:Vg2}.
This contradicts the fact that $P^\prime$ has $q$ vertices.

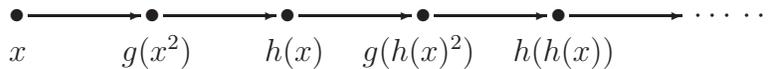
\begin{figure}[!htbp]
\begin{center}
\setlength{\unitlength}{1cm}
\begin{picture}(20,1)
\multiput(1.5,0.5)(1.8,0){5}{$\bullet$}
\multiput(1.75, 0.6)(1.8,0){5}{\vector(1,0){1.5}}
\multiput(10.6,0.5)(0.65,0){2}{$\cdots$}

\put(1.5,0){$x$}
\put(3,0){$g(x^2)$}
\put(4.9,0){$h(x)$}
\put(6.2,0){$g(h(x)^2)$}
\put(8.2,0){$h(h(x))$}
\end{picture}
\end{center}
\caption{The first case of $P^\prime$}
\label{fig:perm-P1}
\end{figure}

Similarly, for the other case that  $y=g(\lambda x^2)$,
we define the polynomial $u(X) = g( g(\lambda X^2)^2)$.
Then, the path $P^\prime$ is of the form in Figure \ref{fig:perm-P2}, where $x=g(0)$.
Thus, from Figure \ref{fig:perm-P2}, in this case the number of vertices in $P^\prime$ is at most
$$
2|\cV(u)| + 2 \le q-1,
$$
where the inequality follows from $|\cV(u)|=|\cV (g(\lambda X^2)^2)|$ and \eqref{eq:Vg2}.
This contradicts the fact that $P^\prime$ has $q$ vertices.

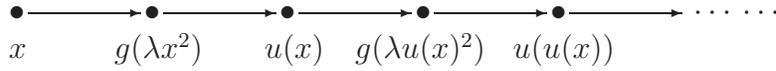
\begin{figure}[!htbp]
\begin{center}
\setlength{\unitlength}{1cm}
\begin{picture}(20,1)
\multiput(1.5,0.5)(1.8,0){5}{$\bullet$}
\multiput(1.75, 0.6)(1.8,0){5}{\vector(1,0){1.5}}
\multiput(10.6,0.5)(0.65,0){2}{$\cdots$}

\put(1.5,0){$x$}
\put(2.9,0){$g(\lambda x^2)$}
\put(4.9,0){$u(x)$}
\put(6.1,0){$g(\lambda u(x)^2)$}
\put(8.2,0){$u(u(x))$}
\end{picture}
\end{center}
\caption{The second case of $P^\prime$}
\label{fig:perm-P2}
\end{figure}

Therefore, there is no such Hamiltonian cycle of Type 1.
\end{proof}

When the polynomial $f$ is of degree one, we can achieve more.

\begin{theorem}  \label{thm:linear-P}
Let $f \in \F_q[X]$ be a polynomial of degree one with non-zero constant term.
Then, any path of Type 1 in the graph $\cG(\lambda,f)$ contains at most  $\lfloor \frac{3}{4}q + \frac{17}{4} \rfloor$  vertices.
\end{theorem}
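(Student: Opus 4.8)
The plan is to combine the edge-reversal argument already used in the proof of Theorem~\ref{thm:HT1} with an explicit quadratic-character computation of the two relevant value sets. Write $f(X) = aX + b$ with $a \ne 0$ and $b \ne 0$, and let $g(X) = a^{-1}(X - b)$ be its compositional inverse, which is again a permutation polynomial of degree one. Reversing all edges carries $\cG(\lambda,f)$ to the graph $\cG^\prime(\lambda,g)$, and a path of Type~1 in $\cG(\lambda,f)$ becomes a path of Type~1 in $\cG^\prime(\lambda,g)$ with the same number of vertices, starting with either a weight-$0$ or a weight-$1$ edge. In the first case the reversed path has the shape of Figure~\ref{fig:perm-P1}: its even-indexed vertices are the iterates of $h(X) = g(\lambda g(X^2)^2)$ started at the initial vertex, and its odd-indexed vertices are the images of the even ones under $X \mapsto g(X^2)$; the second case is symmetric with $u(X) = g(g(\lambda X^2)^2)$ as in Figure~\ref{fig:perm-P2}. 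Exactly as in the proof of Theorem~\ref{thm:HT1}, this yields the vertex bounds $2|\cV(h)| + 2 = 2|\cV(g(X^2)^2)| + 2$ and $2|\cV(u)| + 2 = 2|\cV(g(\lambda X^2)^2)| + 2$. I would note that, unlike in Theorem~\ref{thm:HT1}, the path here need not run from $0$ to $g(0)$, but the counting only uses the iterate structure, so it applies verbatim to an arbitrary Type~1 path.

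Since $g$ is linear, $g(X^2)^2 = a^{-2}(X^2 - b)^2$ and $g(\lambda X^2)^2 = a^{-2}(\lambda X^2 - b)^2$, so it remains to count the value sets of $(X^2 - b)^2$ and $(\lambda X^2 - b)^2$. I would handle both uniformly: for fixed $c \ne 0$ the values of $(\mu X^2 - b)^2$ are the values of $(\mu t - b)^2$ as $t$ runs over the set $S$ of squares of $\F_q$ (including $0$, so $|S| = (q+1)/2$), and $(\mu t - b)^2 = (\mu t' - b)^2$ with $t \ne t'$ precisely when $t + t' = 2b\mu^{-1}$. Hence, with $c = b\mu^{-1}$, the number of distinct values equals $|S| - k$, where $k$ is the number of unordered pairs $\{t, 2c - t\} \subseteq S$ with $t \ne c$. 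Writing $N = |\{t \in S : 2c - t \in S\}|$ and letting $\epsilon = \mathbf{1}[c \in S]$ account for the fixed point $t = c$, one gets $|\cV((\mu X^2 - b)^2)| = \tfrac{q+1}{2} - \tfrac{N - \epsilon}{2}$.

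The quantity $N$ is a standard character sum. With $\chi$ the quadratic character of $\F_q$ and $\mathbf{1}[s \in S] = \tfrac{1+\chi(s)}{2} + \tfrac12\mathbf{1}[s=0]$, the main term is $\tfrac14\sum_t(1+\chi(t))(1+\chi(2c-t))$, whose only nontrivial piece is $\sum_t \chi(t(2c-t)) = \sum_t \chi(-(t-c)^2 + c^2) = -\chi(-1)$ (the discriminant $4c^2$ being nonzero), while the boundary contributions at $t=0$ and $t=2c$ give $\tfrac{1+\chi(2c)}{2}$. This yields $N = \tfrac{q-\chi(-1)}{4} + \tfrac{1+\chi(2c)}{2}$. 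Taking $c = b$ and then $c = b\lambda^{-1}$, and using $\chi(\lambda) = -1$ so that $\chi(2b\lambda^{-1}) = -\chi(2b)$, I would substitute into the two vertex bounds to obtain $2|\cV(g(X^2)^2)| + 2 = \tfrac{3q}{4} + \tfrac{\chi(-1)}{4} - \tfrac{\chi(2b)}{2} + \epsilon + \tfrac52$ and $2|\cV(g(\lambda X^2)^2)| + 2 = \tfrac{3q}{4} + \tfrac{\chi(-1)}{4} + \tfrac{\chi(2b)}{2} + \epsilon' + \tfrac52$, for the corresponding fixed-point indicators $\epsilon, \epsilon' \in \{0,1\}$. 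Maximizing the bounded correction terms ($\chi(-1), \chi(2b) \in \{\pm1\}$ and $\epsilon \le 1$) gives $\tfrac34 q + \tfrac{17}{4}$ in each case, and since the number of vertices is an integer, at most $\lfloor \tfrac34 q + \tfrac{17}{4}\rfloor$ vertices occur in either case, which is the claim.

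The main obstacle is the constant-tracking at the very end: the value $\tfrac{17}{4}$ is precisely the worst case over the signs $\chi(-1), \chi(2b)$ and the indicator $\epsilon$, so I must verify that the simultaneous choices realizing the maximum are genuinely consistent (for the first bound, $\chi(-1)=1$, $\chi(2b)=-1$ and $\epsilon = \mathbf{1}[\chi(b)=1]=1$, which forces $\chi(2)=-1$ and is indeed realizable), and I must be careful about the exact boundary terms of the character sum and the fixed-point correction rather than just their orders of magnitude. Everything outside this bookkeeping is routine.
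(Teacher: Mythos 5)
Your proposal is correct, and its skeleton coincides with the paper's: both reverse the edges to pass to $\cG^\prime(\lambda,g)$, read a Type~1 path as the orbit structure of $h(X)=g(\lambda g(X^2)^2)$ or $u(X)=g(g(\lambda X^2)^2)$, and bound the vertex count by $2|\cV(h)|+2$ resp.\ $2|\cV(u)|+2$; your explicit remark that this count is independent of the start vertex (unlike in Theorem~\ref{thm:HT1}, where the path runs from $0$ to $g(0)$) is a point the paper leaves implicit, and you settle it correctly. Where you genuinely diverge is the key estimate. The paper normalizes to $f=X+a$ via Proposition~\ref{prop:linear1}, writes $\cV(h)=\cV((X^2-a)^2)=\cV(X^4-2aX^2)$, and quotes the value-set theorem \cite[Theorem 10]{CGM} to get $|\cV(h)|\le \frac{q-1}{2\gcd(4,q-1)}+\frac{q+1}{2\gcd(4,q+1)}+1\le\frac{3}{8}q+\frac{9}{8}$, whence $N\le\frac{3}{4}q+\frac{17}{4}$. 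You instead compute $|\cV((\mu X^2-b)^2)|$ \emph{exactly}: parametrizing by the squares $t$, noting that the only collisions are the disjoint pairs $t+t'=2b\mu^{-1}$ under the involution $t\mapsto 2c-t$, and evaluating the pair count with the complete sum $\sum_t\chi\bigl(-(t-c)^2+c^2\bigr)=-\chi(-1)$, which is exactly the second part of Theorem~\ref{thm:Weil} already stated in the paper. I have checked your bookkeeping: $N=\frac{q-\chi(-1)}{4}+\frac{1+\chi(2c)}{2}$, hence $2|\cV|+2=\frac{3q}{4}+\frac{\chi(-1)}{4}-\frac{\chi(2c)}{2}+\epsilon+\frac{5}{2}$, whose worst case is $\frac{3}{4}q+\frac{17}{4}$ in both the $\mu=1$ and $\mu=\lambda$ cases (using $\chi(2b\lambda^{-1})=-\chi(2b)$), matching the theorem after taking floors. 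What your route buys is a self-contained argument with an exact value-set formula, strictly sharper than the paper's inequality; what the paper's citation buys is brevity and a method that transfers unchanged to the higher-degree setting of Theorem~\ref{thm:perm-HT1}, where the same bound on $|\cV((X^2-a)^2)|$ is reused. One small correction: the ``consistency check'' you flag at the end is superfluous for the theorem as stated. Since only an upper bound is asserted, you may bound $\chi(-1)$, $-\chi(2c)$ and $\epsilon$ by their maxima separately, whether or not those maxima are simultaneously attainable; realizability of the extremal configuration would matter only if you wished to show your formula makes the constant $\frac{17}{4}$ sharp.
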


\begin{proof}
From Proposition~\ref{prop:linear1} below, we can assume that $f(X)=X+a,  a\in \F_q^*$. 
Let $P$ be any path of Type $1$ in the graph $\cG(\lambda,f)$. 
Let $N$ be the number of vertices in $P$.

Following the arguments and the notation in the proof of Theorem~\ref{thm:HT1}, 
in the case here we have $g(X) = X - a$, 
$$
h(X) = g(\lambda g(X^2)^2) = \lambda (X^2-a)^2-a,
$$
and 
$$
u(X) = g( g(\lambda X^2)^2) =  (\lambda X^2-a)^2-a. 
$$
Then, either $N \le 2 |\cV(h)| + 2$ or $N \le 2 |\cV(u)| + 2$.

Note that $\cV(h)=\cV((X^2-a)^2)=\cV(X^4-2aX^2)$.
Then, noticing $a \in \F_q^*$ and using \cite[Theorem 10]{CGM}, we obtain
\begin{equation}  \label{eq:Vh}  
\begin{split}
|\cV(h)| & \le \frac{q-1}{2\gcd(4,q-1)} + \frac{q+1}{2\gcd(4,q+1)} + 1 \\
& = \frac{3}{8}q - \frac{1}{2\gcd(4,q-1)} + \frac{1}{2\gcd(4,q+1)} + 1 \\
& \le \frac{3}{8}q + \frac{9}{8}.
\end{split}
\end{equation}
Similarly, we obtain 
$$
|\cV(u)|   \le \frac{3}{8}q + \frac{9}{8}.
$$

Finally, collecting the above estimates we have
$$
N \le \frac{3}{4}q + \frac{17}{4}.
$$
This completes the proof.
\end{proof}

We remark that the result in Theorem~\ref{thm:linear-P} does not always hold if $f$ has zero constant term. 
For example, the graph $\cG(2,X)$ over $\F_{19}$ has a path of Type 1 having 18 vertices.

\begin{corollary}  \label{cor:linear-HT1}
Assume $q > 17$. Let $f \in \F_q[X]$ be a polynomial of degree one.
Suppose that  the graph $\cG(\lambda,f)$ is connected.
Then,  the graph $\cG(\lambda,f)$ has no Hamiltonian cycle of Type 1.
\end{corollary}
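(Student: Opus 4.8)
The plan is to deduce the corollary from Theorem~\ref{thm:linear-P} by a short counting argument, after first showing that the connectedness hypothesis rules out the case of zero constant term. Write $f(X)=bX+a$ with $b\in\F_q^*$; such a polynomial is automatically a permutation polynomial, so every result of this section is available. I would split according to whether $a=0$ or $a\ne 0$.

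First I would dispose of the case $a=0$, i.e. $f(X)=bX$. Here $f(0)=0$, so by Proposition~\ref{prop:perm-inout} the vertex $0$ has both in-degree and out-degree equal to $1$. Examining the defining relations $y^2=f(x)$ and $\lambda y^2=f(x)$ shows that the only edge leaving $0$ and the only edge entering $0$ are both the loop at $0$: indeed $f(x)=0$ forces $x=0$, and $f(0)=0$ forces $y=0$. Hence $\{0\}$ is a connected component all by itself, and since $q>17$ the graph has more than one vertex and so cannot be connected. Therefore the connectedness hypothesis forces $a\ne 0$, i.e. $f$ has non-zero constant term.

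It then remains to treat $a\ne 0$, and here I would argue by contradiction. Suppose $\cG(\lambda,f)$ has a Hamiltonian cycle $H$ of Type $1$. By Definition~\ref{def:type}, deleting the (ambiguously weighted) edge going into the vertex $0$ turns $H$ into a path $P$ of Type $1$ visiting all $q$ vertices, exactly as in the proof of Theorem~\ref{thm:HT1}. On the other hand, since $f$ has degree one and non-zero constant term, Theorem~\ref{thm:linear-P} bounds the number of vertices of any Type $1$ path by $\lfloor \tfrac34 q+\tfrac{17}{4}\rfloor$. Comparing the two counts yields
\[
q \;\le\; \Big\lfloor \tfrac34 q+\tfrac{17}{4}\Big\rfloor \;\le\; \tfrac34 q+\tfrac{17}{4},
\]
which rearranges to $q\le 17$, contradicting $q>17$. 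Hence no Hamiltonian cycle of Type $1$ exists.

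All the genuine work is carried out in Theorem~\ref{thm:linear-P} (whose proof rests on the value-set estimate of \cite[Theorem 10]{CGM}), so I do not expect any serious obstacle at this stage. The only points needing care are the exclusion of the zero-constant-term case via the isolated-vertex observation and the elementary threshold computation $\tfrac34 q+\tfrac{17}{4}<q \iff q>17$; the main thing to verify cleanly is the bookkeeping that a Type $1$ Hamiltonian cycle really produces a Type $1$ path on all $q$ vertices, so that Theorem~\ref{thm:linear-P} applies verbatim.
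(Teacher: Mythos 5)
Your proposal is correct and takes essentially the same route as the paper's own proof: connectedness forces a non-zero constant term (since for $f(X)=bX$ the vertex $0$ is an isolated component, exactly as you argue), and then Theorem~\ref{thm:linear-P} together with the elementary threshold $\tfrac{3}{4}q+\tfrac{17}{4}<q \iff q>17$ rules out Type $1$ Hamiltonian cycles. Your extra bookkeeping step, that a Type $1$ Hamiltonian cycle yields a Type $1$ path on all $q$ vertices after deleting the edge into $0$, is precisely the convention already used in the proof of Theorem~\ref{thm:HT1}, so nothing further is needed.
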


\begin{proof} 
Since $\cG(\lambda,f)$ is connected, we know that $f$ has non-zero constant term 
(otherwise the vertex 0 itself forms a connected component). 
By Theorem~\ref{thm:linear-P}, if $\frac{3}{4}q + \frac{17}{4} < q$,
then the graph $\cG(\lambda,f)$ has no Hamiltonian cycle of Type 1.
Since $q > 17$, this automatically holds.
\end{proof}

In fact, we can obtain similar results for more permutation polynomials over $\F_q$.

\begin{theorem}  \label{thm:perm-HT1}
Let $f \in \F_q[X]$ be a permutation polynomial of the form $Xw(X^2) + a, w \in \F_q[X], a \in \F_q^*$.
Then, the results in Theorem~\ref{thm:linear-P} and Corollary~\ref{cor:linear-HT1}
still hold for the graph $\cG(\lambda,f)$.
\end{theorem}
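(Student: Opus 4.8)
My plan is to follow the proofs of Theorems~\ref{thm:HT1} and~\ref{thm:linear-P} essentially verbatim, the only new ingredient being a symmetry of the inverse permutation which forces the two value sets appearing in those proofs to have \emph{exactly} the same cardinality as in the degree-one case. First I would set $\phi(X) = f(X) - a = Xw(X^2)$. Since $\phi(-X) = -\phi(X)$, the polynomial $\phi$ is odd, and as $f$ is a permutation polynomial so is $\phi$; let $\psi$ be its compositional inverse on $\F_q$, which is again odd (the inverse of an odd bijection). The inverse permutation $g$ of $f$ then satisfies $g(X) = \psi(X-a)$, which yields the key symmetry
\begin{equation} \label{eq:gsym}
g(2a - X) = \psi(a-X) = -\psi(X-a) = -g(X),
\end{equation}
so that $X \mapsto g(X)^2$ is invariant under the reflection $X \mapsto 2a - X$. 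Note also that $f(0) = a \ne 0$, which will play the role of the ``non-zero constant term'' hypothesis.

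Next I would invoke the reduction already carried out in the proof of Theorem~\ref{thm:HT1}: reversing all edges passes to the graph $\cG'(\lambda,g)$, and tracing an alternating path $P'$ shows that any path $P$ of Type~1 with $N$ vertices satisfies $N \le 2|\cV(h)| + 2$ or $N \le 2|\cV(u)| + 2$, where $h(X) = g(\lambda g(X^2)^2)$ and $u(X) = g(g(\lambda X^2)^2)$. Since $g$ is a bijection and $\lambda \ne 0$, the outer $g$ and the scalar $\lambda$ do not affect value-set cardinalities, so I would record that
$$
|\cV(h)| = |\{g(s)^2 : s \in S\}|, \qquad |\cV(u)| = |\{g(t)^2 : t \in \lambda S\}|,
$$
where $S = \{x^2 : x \in \F_q\}$ is the set of squares (including $0$) and $\lambda S$ is the set of non-squares together with $0$.

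The heart of the argument is a collision count. For any $s, s'$ one has $g(s)^2 = g(s')^2$ if and only if $g(s) = \pm g(s')$; injectivity of $g$ disposes of the $+$ case ($s = s'$), while \eqref{eq:gsym} turns the $-$ case into $g(s) = g(2a - s')$, hence $s + s' = 2a$. Thus the \emph{only} coincidences among the squared values are those coming from pairs summing to $2a$ --- precisely the coincidences present for the linear inverse $g_0(X) = X - a$, whose square $(X-a)^2$ obeys the same symmetry \eqref{eq:gsym}. Consequently $|\{g(s)^2 : s \in S\}| = |\cV((X^2 - a)^2)|$ and $|\{g(t)^2 : t \in \lambda S\}| = |\cV((\lambda X^2 - a)^2)|$, so $|\cV(h)|$ and $|\cV(u)|$ coincide with the cardinalities bounded in the proof of Theorem~\ref{thm:linear-P} via \cite[Theorem~10]{CGM} (applied to $X^4 - 2aX^2$ and to $X^4 - 2a\lambda^{-1}X^2$, both with nonzero middle coefficient as $a \ne 0$). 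This gives $|\cV(h)|, |\cV(u)| \le \frac{3}{8}q + \frac{9}{8}$ and hence $N \le \frac{3}{4}q + \frac{17}{4}$, the analogue of Theorem~\ref{thm:linear-P}. The analogue of Corollary~\ref{cor:linear-HT1} then follows exactly as before: connectedness is compatible with $f(0) = a \ne 0$, a Type~1 Hamiltonian cycle would produce a Type~1 path on all $q$ vertices, and for $q > 17$ the inequality $\frac{3}{4}q + \frac{17}{4} < q$ excludes this.

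I expect the collision-counting step to be the main obstacle to state cleanly: one must be careful to verify that \eqref{eq:gsym} captures \emph{all} coincidences among the squared values of $g$ on $S$ and on $\lambda S$, so that the two cardinalities are genuinely independent of the particular permutation $\psi$ and thereby reduce to the degree-one computation. The odd structure $f(X) - a = Xw(X^2)$ is exactly what makes \eqref{eq:gsym} available, and without it the reduction to \cite[Theorem~10]{CGM} would break down; this is why the hypothesis on the shape of $f$ cannot simply be dropped.
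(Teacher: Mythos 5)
Your proposal is correct and takes essentially the same route as the paper: both arguments reduce to bounding $|\cV(g(X^2)^2)|$ and $|\cV(g(\lambda X^2)^2)|$ by showing they equal $|\cV((X^2-a)^2)|$ and $|\cV((\lambda X^2-a)^2)|$ and then invoking the bound from \cite[Theorem 10]{CGM} as in \eqref{eq:Vh}. Your reflection symmetry $g(2a-X)=-g(X)$, derived from the oddness of $Xw(X^2)$ and of its compositional inverse, is simply a clean repackaging of the paper's key observation that $a_i=-a_j$ if and only if $b_i=-b_j$, and your collision count matches the paper's identity $|\{b_1^2,\ldots,b_n^2\}|=|\{a_1^2,\ldots,a_n^2\}|$.
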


\begin{proof}
Since $f=Xw(X^2) + a \in \F_q[X]$ is a permutation polynomial over  $\F_q$,
 there  is a permutation polynomial $g \in \F_q[X]$ such that
both $f(g(X))$ and $g(f(X))$ induce the identity map from $\F_q$ to itself.

As in the proof of Theorem~\ref{thm:linear-P}, it suffices to prove
$$
|\cV (g(X^2)^2)| \le \frac{3}{8}q + \frac{9}{8}, \qquad   |\cV(g(\lambda X^2)^2)| \le  \frac{3}{8}q + \frac{9}{8}.
$$

Denote $n=(q+1)/2$. Note that there are exactly $n$ squares in $\F_q$ (including 0).
Let $a_1, \ldots, a_n \in \F_q$ be all the elements such that $a_i + a$ is a square for each $1 \le i \le n$.
Then, let $b_1, \ldots, b_n \in \F_q$ be such that $b_i w(b_i^2) = a_i$ for each $1 \le i \le n$
(here one should note that $Xw(X^2)$ is also a permutation polynomial over  $\F_q$).

Note that if we have $y = g(x^2)$ for some $x,y \in \F_q$, then $x^2 = f(y) = yw(y^2) +a$,
and so $yw(y^2) = a_i$ for some $1 \le i \le n$, and thus $y = b_i$.
So, we have
$$
\cV (g(X^2)) = \{b_1, \ldots, b_n\},
$$
which implies
$$
\cV (g(X^2)^2) = \{b_1^2, \ldots, b_n^2\}.
$$
Since $\{a_1, \ldots, a_n\} = \cV(X^2-a)$ by construction,
as  in \eqref{eq:Vh} we obtain
$$
 |\{a_1^2, \ldots, a_n^2\}| = |\cV((X^2-a)^2)|  \le \frac{3}{8}q + \frac{9}{8}.
$$
Clearly for any $1 \le i,j \le n, i \ne j$, we have that $a_i = - a_j$ if and only if $b_i = -b_j$
(because $Xw(X^2)$ is a permutation polynomial).
Hence, we have
$$
|\{b_1^2, \ldots, b_n^2\}| = |\{a_1^2, \ldots, a_n^2\}|,
$$
which implies
$$
|\cV (g(X^2)^2)| \le \frac{3}{8}q + \frac{9}{8}.
$$

Similarly, we obtain
$$
 |\cV(g(\lambda X^2)^2)| \le  \frac{3}{8}q + \frac{9}{8}.
$$
This in fact completes the proof.
\end{proof}

Note that when $3 \nmid q-1$,  $X^3 +a \in \F_q[X]$ is a permutation polynomial,
so we immediately obtain the following result from Theorem~\ref{thm:perm-HT1}.

\begin{corollary}  \label{cor:cubic-HT1}
Assume that $q > 17$ and $3 \nmid q-1$. Let $f = X^3 + a \in \F_q[X]$.
Suppose that  the graph $\cG(\lambda,f)$ is connected.
Then,  the graph $\cG(\lambda,f)$ has no Hamiltonian cycle of Type 1.
\end{corollary}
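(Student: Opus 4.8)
The plan is to reduce the statement directly to Theorem~\ref{thm:perm-HT1}, since that theorem already delivers the conclusion ``no Hamiltonian cycle of Type 1 for $q>17$'' for every permutation polynomial of the shape $Xw(X^2)+a$ with $w\in\F_q[X]$ and $a\in\F_q^*$. Thus the only real work is to check that, under the hypotheses $q>17$, $3\nmid q-1$, and connectedness of $\cG(\lambda,f)$, the polynomial $f=X^3+a$ genuinely falls into that class.

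First I would verify that $f$ is a permutation polynomial. Recall the standard criterion that the monomial $X^n$ permutes $\F_q$ if and only if $\gcd(n,q-1)=1$; the hypothesis $3\nmid q-1$ is exactly $\gcd(3,q-1)=1$, so $X^3$ permutes $\F_q$, and adding the constant $a$ preserves bijectivity, whence $f=X^3+a$ is a permutation polynomial. Next I would exhibit the required form: taking $w(X)=X$ gives $Xw(X^2)=X\cdot X^2=X^3$, so $f=Xw(X^2)+a$ with $w\in\F_q[X]$.

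It then remains to pin down the constant term, since Theorem~\ref{thm:perm-HT1} demands $a\in\F_q^*$ whereas the corollary only assumes connectedness. Here I would argue, exactly as in the proof of Corollary~\ref{cor:linear-HT1}, that connectedness forces $a\neq0$. Indeed, if $a=0$ then $f(0)=0$, and by Proposition~\ref{prop:perm-inout} the vertex $0$ has in-degree and out-degree both equal to $1$; since the unique edge out of $0$ goes to a vertex $y$ with $y^2=f(0)=0$ or $\lambda y^2=f(0)=0$, that edge is the loop at $0$, so $\{0\}$ is its own connected component, contradicting connectedness (as $q>17>1$). Hence $a\in\F_q^*$.

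With $q>17$, $3\nmid q-1$, $f=X^3+a=Xw(X^2)+a$ a permutation polynomial for $w(X)=X$, and $a\in\F_q^*$, all hypotheses of Theorem~\ref{thm:perm-HT1} are met, and its conclusion (the analog of Corollary~\ref{cor:linear-HT1}) gives that $\cG(\lambda,f)$ has no Hamiltonian cycle of Type 1. I do not anticipate any genuine obstacle: the entire argument is a direct application of the already-established Theorem~\ref{thm:perm-HT1}, and the only points requiring a moment's care are the permutation-polynomial criterion for $X^3+a$ (which is precisely where the hypothesis $3\nmid q-1$ enters) and the elimination of the degenerate case $a=0$ via connectedness.
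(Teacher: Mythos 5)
Your proposal is correct and follows exactly the paper's route: the authors also obtain this corollary as an immediate application of Theorem~\ref{thm:perm-HT1}, noting that $3\nmid q-1$ makes $X^3+a$ a permutation polynomial (of the form $Xw(X^2)+a$ with $w(X)=X$) and that connectedness rules out $a=0$, just as in Corollary~\ref{cor:linear-HT1}. Your extra verifications (the criterion $\gcd(3,q-1)=1$ and the isolated loop at the vertex $0$ when $a=0$) are accurate elaborations of details the paper leaves implicit.
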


In Sections~\ref{sec:linear} and \ref{sec:cubic},
we will make computations about Hamiltonian cycles of Type 2 and Type 3
for the graphs $\cG(\lambda,X+a)$ and $\cG(\lambda,X^3+a)$ respectively.
The computations suggest that these graphs can have many types of Hamiltonian cycles.

\subsection{Binary sequences derived from Hamiltonian cycles}

Traveling through a cycle in $\cG(\lambda,f)$, 
we can get a binary sequence by recording the weights  of the edges (see Definition~\ref{def:weight}) in the cycle.
Especially, we can get a balancing sequence along any Hamiltonian cyle of a connected component.
The word ``balancing" means that the difference between the number of $0$'s and the number of $1$'s in the sequence is at most $1$.

\begin{theorem}
\label{thm:perm-balance}
For each graph $\cG(\lambda,f)$ with permutation polynomial $f$,
along any Hamiltonian cycle of any connected component, we can get a balancing binary sequence.
\end{theorem}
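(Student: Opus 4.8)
The plan is to reduce the balancing claim to a double counting of the weight-$1$ edges, exploiting that the weight of an edge depends only on its \emph{source}. The crucial first observation is the following: for a vertex $x$ with $f(x) \ne 0$, the relation $y^2 = f(x)$ (weight $0$) is solvable exactly when $f(x)$ is a nonzero square, whereas $\lambda y^2 = f(x)$ (weight $1$) is solvable exactly when $f(x)$ is a non-square (since $\lambda$ is a non-square), and these two cases are mutually exclusive. Hence the two out-edges of $x$ (to $y$ and to $-y$) always carry the \emph{same} weight: weight $0$ if $f(x)$ is a square or zero, and weight $1$ if $f(x)$ is a non-square.

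Granting this, I would fix an arbitrary connected component $C$ and an arbitrary Hamiltonian cycle $H$ of $C$. Since $H$ uses exactly one out-edge at each vertex, the number $n_1$ of weight-$1$ edges traversed by $H$ equals the number of vertices $x \in C$ with $f(x)$ a non-square; in particular this count is the \emph{same for every} Hamiltonian cycle of $C$, as required by the statement. Writing $S = \{x \in C : f(x) \text{ is a non-square}\}$, I would then compute the total number $W_1$ of weight-$1$ edges lying inside $C$ in two ways. Counting by source: each $x \in S$ has out-degree $2$ (as $f(x)\ne 0$) and both its out-edges have weight $1$, so $W_1 = 2|S| = 2n_1$. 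Counting by target: for each $y \in C$ with $y \ne 0$ there is exactly one weight-$1$ in-edge, namely the edge from $f^{-1}(\lambda y^2)$, where the permutation property of $f$ is essential and one checks $f^{-1}(y^2) \ne f^{-1}(\lambda y^2)$ since $y \ne 0$ and $\lambda \ne 1$; the vertex $0$ has no weight-$1$ in-edge. As $C$ is a full connected component, all these in-edges lie in $C$, so $W_1 = |C| - \varepsilon$, with $\varepsilon = 1$ if $0 \in C$ and $\varepsilon = 0$ otherwise.

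Combining the two counts yields $2n_1 = |C| - \varepsilon$, hence $n_1 = (|C|-\varepsilon)/2$ and the number of weight-$0$ edges along $H$ is $n_0 = |C| - n_1 = (|C|+\varepsilon)/2$. Therefore $n_0 - n_1 = \varepsilon \in \{0,1\}$, which is precisely the balancing condition $|n_0 - n_1| \le 1$. (As a sanity check, this also forces the parity of $|C|$: even when $0 \notin C$ and odd when $0 \in C$, consistent with $W_1 = 2|S|$ being even.)

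The only case needing separate treatment is $f(0)=0$: then the sole out-edge of $0$ is the loop at $0$, so $\{0\}$ is its own component whose unique Hamiltonian cycle is that single weight-$0$ loop, trivially balancing; every other component avoids $0$ and is handled by the count above. Since the whole argument is a parity and double-counting bookkeeping, I do not expect a genuine obstacle. The one point demanding care is the exceptional vertex $0$ (its in-degree $1$, and the loop when $f(0)=0$), which is exactly what produces the admissible discrepancy $\varepsilon$.
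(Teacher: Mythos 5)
Your proof is correct, and it reaches the conclusion by a genuinely different route from the paper's. The paper works locally along the cycle: around each pair $\{y,-y\}$ of nonzero vertices of the component $C$ it considers the rectangle formed with the two common predecessors $x=f^{-1}(y^2)$ and $z=f^{-1}(\lambda y^2)$ (Figure~\ref{fig:perm-rec}), and observes that since $H$ enters each of $y$ and $-y$ exactly once while using exactly one out-edge at each of $x$ and $z$, the cycle must traverse exactly one $x$-edge and one $z$-edge into the pair, and these carry complementary weights; summing this weight-complementary pairing over all pairs $\{y,-y\}$, with the conventional weight-$0$ edge into $0$ as the only possible leftover, gives the balance directly. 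You instead argue globally: from the observation that the weight of an edge depends only on its source (both out-edges of $x$ have weight $1$ exactly when $f(x)$ is a non-square), you get $n_1=|S|$ for $S=\{x\in C:\ f(x)\ \text{non-square}\}$, and you evaluate $|S|$ by double counting the weight-$1$ edges of $C$ by source ($2|S|$) and by target ($|C|-\varepsilon$, using the permutation property of $f$ so that each nonzero $y$ has exactly one weight-$1$ in-edge). Both proofs rest on the same structural facts (Proposition~\ref{prop:perm-inout}, $\lambda$ non-square, and the convention of Definition~\ref{def:weight} at the vertex $0$), and both use Hamiltonicity only through ``exactly one out-edge and one in-edge per vertex of $C$''; but your double count buys a slightly stronger conclusion, namely the explicit invariant $n_1=(|C|-\varepsilon)/2$, valid for every Hamiltonian cycle of $C$ (indeed for every spanning union of disjoint cycles), whereas the paper's pairing establishes balance without exhibiting this formula. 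Your separate treatment of the loop component $\{0\}$ when $f(0)=0$ is the right degenerate case and is consistent with the paper's conventions.
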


\begin{proof}
Let $H$ be a Hamiltonian cycle of a connected component $C$ in the graph $\cG(\lambda,f)$.
The existence of $H$ has been confirmed by Theorem~\ref{thm:perm-Ha}.
We can assume that $C$ has a vertex not equal to $0$.
As mentioned the above, we can get a binary sequence by going through the cycle $H$ and recording the weights of the edges in $H$.
So, it remains to prove that this sequence is balancing. 

Let $y$ be any non-zero vertex in $C$. Then, $-y$ is also in $C$.
In fact, by Proposition~\ref{prop:perm-inout} they form a rectangle in $C$ having two possible choices with weights; see Figure~\ref{fig:perm-rec}.
Notice that the cycle $H$ travels every vertex in $C$, automatically including the vertices $x, \pm y, z$ in Figure~\ref{fig:perm-rec}.
Then, it is easy to see that the cycle $H$ goes through the edge from $x$ to $y$ with weight $0$ (respectively, $1$)
if and only if it goes through the edge from $z$ to $-y$ with weight $1$ (respectively, $0$);
also, the cycle $H$ goes through the edge from $x$ to $-y$ with weight $0$ (respectively, $1$)
if and only if it goes through the edge from $z$ to $y$ with weight $1$ (respectively, $0$).

\begin{figure}[!htbp]
\begin{center}
\setlength{\unitlength}{1cm}
\begin{picture}(20,2)
\multiput(2.5,1.5)(1.8,0){2}{$\bullet$}
\put(2.75, 1.6){\vector(1,0){1.5}}
\multiput(2.5,-0.3)(1.8,0){2}{$\bullet$}
\put(2.6, 1.45){\vector(0,-1){1.5}}
\put(4.4, -0.05){\vector(0,1){1.5}}
\put(4.25, -0.2){\vector(-1,0){1.5}}
\put(2.2,1.5){$x$}
\put(4.6,1.5){$-y$}
\put(2.2,-0.3){$y$}
\put(4.55,-0.3){$z$}
\put(3.35,1.75){$0$}
\put(2.3,0.6){$0$}
\put(4.5,0.6){$1$}
\put(3.4,-0.05){$1$}

\multiput(7.5,1.5)(1.8,0){2}{$\bullet$}
\put(7.75, 1.6){\vector(1,0){1.5}}
\multiput(7.5,-0.3)(1.8,0){2}{$\bullet$}
\put(7.6, 1.45){\vector(0,-1){1.5}}
\put(9.4, -0.05){\vector(0,1){1.5}}
\put(9.25, -0.2){\vector(-1,0){1.5}}
\put(7.2,1.5){$x$}
\put(9.6,1.5){$-y$}
\put(7.2,-0.3){$y$}
\put(9.55,-0.3){$z$}
\put(8.35,1.75){$1$}
\put(7.3,0.6){$1$}
\put(9.5,0.6){$0$}
\put(8.4,-0.05){$0$}
\end{picture}
\end{center}
\caption{The rectangle related to $\pm y$}
\label{fig:perm-rec}
\end{figure}
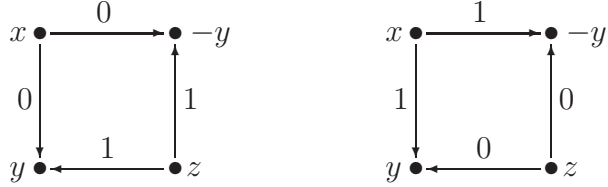

Hence, if the vertex $0$ is not in $C$, then we have the same numbers of $0$'s and $1$'s in the sequence.
Otherwise, if $0$ is a vertex in $C$, then by Definition~\ref{def:weight} the weight of the edge from $f^{-1}(0)$ to $0$ is $0$,
and thus there is exactly one more $0$ than $1$'s in the sequence.
This completes the proof.
\end{proof}

\section{Algorithms}
\label{sec:alg}

In this section, we describe briefly the algorithms we use for counting connected components and searching Hamiltonian cycles
in a graph  $\cG(\lambda,f)$.
We then use them to make computations for the cases when $f=X+a$ and $f=X^3+a$
in Sections~\ref{sec:linear} and \ref{sec:cubic} respectively.

\subsection{Counting connected components}

In order to count connected components in a graph  $\cG(\lambda,f)$, 
we first build the edge table, that is,
the table containing all the edges $(x,y)$ such that $(x,y)$ satisfies either
$y^2 = f(x)$ or $y^2 = \lambda f(x)$. 

Once the edge table is constructed, we perform the standard depth-first search to find the number of connected components in $\cG(\lambda,f)$. 
Note that, as soon as the edge table is built, this is the same code for any polynomial, but here we focus on both linear and cubic cases.

The linear case when $f=X+a$ is straightforward. For the cubic case when $f=X^3+a$, to speed up the process, we precompute the values of $x^3$ 
for all $x \in \F_q$ as well as the values of square roots in $\F_q$.
Therefore for each $a$, we only need to perform $q$ additions to construct 
the edge table.

\subsection{Searching Hamiltonian cycles}

To enumerate all the Hamiltonian cycles, we use the backtracking algorithm.
At the same time it also depends  on the Types. For example, 
to search Hamiltonian cycles of Type 2, we use this algorithm 
 by disregarding any path that contains a trail of length greater than 2. 
This can be done much faster
since their proportion decreases significantly as $q$ increases.

\section{Linear case}
\label{sec:linear}

Since for any $a\ne 0,b\in \F_q$ the polynomial $aX+b$ is a permutation polynomial, all the results in Section~\ref{sec:perm}
automatically hold for the graph $\cG(\lambda,aX+b)$.
Here, we want to investigate these graphs in more detail.

Recall that $q$ is odd, and $\lambda$ is a non-square element in $\F_q$.

\subsection{Isomorphism classes}

It is easy to find some isomorphism classes of the graphs $\cG(\lambda,aX+b)$ over $\F_q$.

\begin{proposition}
\label{prop:linear1}
For any $a\ne 0,b\in \F_q$,  the graph $\cG(\lambda,aX+b)$ is isomorphic to the graph $\cG(\lambda,X+a^{-2}b)$.
\end{proposition}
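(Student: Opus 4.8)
The plan is to exhibit an explicit vertex bijection and check directly that it carries the defining relations of one graph onto those of the other. Since a multiplicative rescaling of the variable is the natural affine degree of freedom that preserves the quadratic shape $Y^2 = f(X)$, I would look for the isomorphism among the maps $\phi(x) = cx$ with $c \in \F_q^*$. First I would record the two edge conditions to be matched: an edge from $x$ to $y$ exists in $\cG(\lambda, aX+b)$ precisely when $y^2 = ax+b$ or $\lambda y^2 = ax+b$, while an edge from $x'$ to $y'$ exists in $\cG(\lambda, X+a^{-2}b)$ precisely when $y'^2 = x' + a^{-2}b$ or $\lambda y'^2 = x' + a^{-2}b$.

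Next I would pin down the scaling constant. Setting $\phi(x) = cx$ and demanding that the relation $y^2 = ax+b$ transform into $(cy)^2 = cx + a^{-2}b$ forces $c^2 a = c$ together with $c^2 b = a^{-2}b$. The first equation gives $c = a^{-1}$ (as $c \ne 0$), and then the second holds automatically since $c^2 = a^{-2}$. So the candidate isomorphism is $\phi(x) = a^{-1}x$.

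Then I would verify both directions of the edge correspondence. Multiplying $y^2 = ax+b$ through by $a^{-2}$ yields $(a^{-1}y)^2 = (a^{-1}x) + a^{-2}b$, that is $\phi(y)^2 = \phi(x) + a^{-2}b$; and multiplying $\lambda y^2 = ax+b$ through by $a^{-2}$ yields $\lambda\,\phi(y)^2 = \phi(x) + a^{-2}b$. Each manipulation is reversible because $a \ne 0$, so $(x,y)$ is an edge of $\cG(\lambda, aX+b)$ if and only if $(\phi(x),\phi(y))$ is an edge of $\cG(\lambda, X+a^{-2}b)$. Since multiplication by $a^{-1}$ is a bijection of $\F_q$, the map $\phi$ is the desired graph isomorphism.

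The computation is entirely routine, so there is no genuine obstacle; the one point deserving a little care is that the rescaling must respect each of the two subequations $Y^2 = f(X)$ and $\lambda Y^2 = f(X)$ separately, not merely their product, so that no edge is created or destroyed. The choice $c = a^{-1}$ does exactly this, matching each subequation of $\cG(\lambda,aX+b)$ to the corresponding subequation of $\cG(\lambda,X+a^{-2}b)$, and in particular it preserves the weight of every edge in the sense of Definition~\ref{def:weight}.
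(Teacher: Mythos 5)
Your proposal is correct and matches the paper's proof exactly: the paper also uses the bijection $\psi(x)=a^{-1}x$ and checks the edge correspondence by direct computation, which you simply carry out in full detail. Your closing observation that the map respects each subequation separately (and hence preserves edge weights) is a nice bonus beyond what the paper records, but the argument is the same.
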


\begin{proof}
Let $\psi$ be the bijection map from $\F_q$ to itself defined by $\psi(x)=a^{-1}x$.
Automatically, $\psi$ is a bijection between the vertices of $\cG(\lambda,aX+b)$ and the vertices of $\cG(\lambda,X+a^{-2}b)$.
To prove the isomorphism, it suffices to show that there is an edge from $x$ to $y$ in $\cG(\lambda,aX+b)$
if and only if there is an edge from $\psi(x)$ to $\psi(y)$ in $\cG(\lambda,X+a^{-2}b)$.
This can be done by direct computation.
\end{proof}

\begin{proposition}
\label{prop:linear2}
For any $a\in \F_q$,  the graph $\cG(\lambda, X+a)$ is isomorphic to the graph $\cG(\lambda^{-1}, X+\lambda a)$.
\end{proposition}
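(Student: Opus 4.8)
The plan is to follow exactly the strategy of Proposition~\ref{prop:linear1}: exhibit an explicit bijection $\psi$ of the common vertex set $\F_q$ and verify that it carries edges to edges in both directions, which is precisely what a digraph isomorphism requires. Before doing so I would dispose of one preliminary point needed merely for the statement to make sense, namely that $\cG(\lambda^{-1}, X+\lambda a)$ is a graph of the type under consideration. This holds because $\lambda$ is a non-square and the inverse of a square is again a square, so $\lambda^{-1}$ is also a non-square, making it a legitimate choice for the leading coefficient in our construction.

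For the bijection I would try the dilation $\psi(x) = \lambda x$, which is a bijection of $\F_q$ since $\lambda \ne 0$. The key step is then to translate the adjacency relation. Recall that there is an edge from $x$ to $y$ in $\cG(\lambda, X+a)$ exactly when $y^2 = x+a$ or $\lambda y^2 = x+a$. I would substitute $u = \psi(x) = \lambda x$ and $v = \psi(y) = \lambda y$ into the defining relation of the target graph, whose edges run from $u$ to $v$ precisely when $v^2 = u + \lambda a$ or $\lambda^{-1} v^2 = u + \lambda a$. Clearing the common factor $\lambda$ from each equation, the relation $v^2 = u + \lambda a$ becomes $\lambda y^2 = x + a$, while the relation $\lambda^{-1} v^2 = u + \lambda a$ becomes $y^2 = x + a$. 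Hence $\psi(x) \to \psi(y)$ is an edge of $\cG(\lambda^{-1}, X+\lambda a)$ if and only if $x \to y$ is an edge of $\cG(\lambda, X+a)$, which gives the claimed isomorphism.

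There is essentially no hard step here; the entire content lies in choosing the correct scalar. The one thing to watch is that the single factor $\lambda$ must simultaneously absorb the change of leading coefficient from $\lambda$ to $\lambda^{-1}$ and the change of constant term from $a$ to $\lambda a$, and it is a small pleasant surprise that the one dilation $x \mapsto \lambda x$ accomplishes both at once; in fact it swaps the roles of the two subequations $Y^2 = f(X)$ and $\lambda Y^2 = f(X)$. Since the verification reduces to the two one-line cancellations above, as in Proposition~\ref{prop:linear1} I would display the substituted relations and then simply remark that the edge correspondence can be checked by direct computation.
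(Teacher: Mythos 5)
Your proposal is correct and is exactly the paper's proof: the paper also takes the dilation $\psi(x)=\lambda x$ and leaves the edge verification as a direct computation, which you have simply written out (correctly, including the observation that $\psi$ swaps the two subequations). The preliminary remark that $\lambda^{-1}$ is again a non-square is a sound addition, though the paper treats it as implicit.
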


\begin{proof}
Note that the isomorphism is induced by the bijection map $\psi$ from $\F_q$ to itself defined by $\psi(x)=\lambda x$.
\end{proof}

From Propositions~\ref{prop:linear1} and \ref{prop:linear2}, we know that to investigate the linear case it suffices to
consider the graphs $\cG(\lambda,X+a)$ when $\lambda$ runs over half of the non-square elements of $\F_q$ and $a$ runs over $\F_q$.

We remark that by reversing the directions of the edges in the graph $\cG(\lambda,X+a)$, 
we exactly obtain the equational graph generated by the equation 
$(Y-X^2+a)(Y- \lambda X^2 + a)=0$.
Note that the equational graph generated by the equation $Y-X^2+a = 0$ in fact has been studied extensively; see \cite{KLMMSS,MSSS,VaSha}.

\subsection{Fixed vertices}

In a graph, we say a vertex is a \textit{fixed vertex} if
there is an edge from the vertex to itself. 
For any $a \in \F_q$,
it is easy to see that the graph $\cG(\lambda,X+a)$
has a fixed vertex if and only if either $a+ \frac{1}{4}$ is a square or $\lambda a+ \frac{1}{4}$ is a square.
So, one graph $\cG(\lambda,X+a)$ can have zero, one, two, three or four fixed vertices.

We first recall some classical results for character sums with polynomial arguments, 
which are special cases of \cite[Theorems 5.41 and 5.48]{LN}.

\begin{theorem}
\label{thm:Weil}
Let $\chi$ be the multiplicative quadratic character of $\F_q$, and let $f\in\F_q[X]$ be a polynomial of
positive degree that is not, up to a multiplicative constant, a square of any polynomial. Let $d$ be the number of distinct
roots of $f$ in its splitting field over $\F_q$. Under these conditions, the following inequality holds:
$$
\left| \sum_{x\in\F_q}\chi(f(x))\right|\le (d-1)q^{1/2}.
$$
Moreover, if $f= aX^2 + bX +c$ with $a \ne 0$ and $b^2-4ac \ne 0$, then 
$$
\sum_{x\in\F_q}\chi(f(x)) = - \chi(a). 
$$
\end{theorem}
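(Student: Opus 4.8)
The plan is to treat the two assertions separately, since they are of very different depth. The second one (the exact evaluation for a quadratic $f$) admits a short elementary computation, whereas the first (the Weil-type bound) rests on the Riemann Hypothesis for curves over finite fields and is the genuine obstacle; I would obtain the latter by relating the character sum to a point count on the curve $y^2 = f(X)$.

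For the quadratic case I would begin by completing the square. Writing $f = aX^2 + bX + c$ with $a \ne 0$ and $D := b^2 - 4ac \ne 0$, the substitution $x = u - b/(2a)$, which is a bijection of $\F_q$, turns the sum into
$$
\sum_{x \in \F_q} \chi(f(x)) = \sum_{u \in \F_q} \chi(a u^2 + e), \qquad e := -\frac{D}{4a} \ne 0.
$$
Using that $\#\{u \in \F_q : u^2 = t\} = 1 + \chi(t)$, I would rewrite the right-hand side as $\sum_t \chi(at+e) + \sum_t \chi(t)\chi(at+e)$. The first sum vanishes, because $t \mapsto at+e$ permutes $\F_q$ and $\sum_{s\in\F_q} \chi(s) = 0$. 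For the second, note $\chi(t)\chi(at+e) = \chi(at^2 + et)$; factoring $at^2+et = t^2(a + e/t)$ for $t \ne 0$ and using $\chi(t^2)=1$, I would reindex by $s = a + e/t$, which runs over $\F_q \setminus \{a\}$ as $t$ runs over $\F_q^*$, so the second sum equals $\sum_{s \ne a} \chi(s) = -\chi(a)$. Combining the two pieces gives $\sum_x \chi(f(x)) = -\chi(a)$.

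For the general bound I would pass to the curve $y^2 = f(X)$. Since $\chi$ is the quadratic character and $\chi(0)=0$, the number of $y \in \F_q$ with $y^2 = f(x)$ is $1 + \chi(f(x))$ for every $x$, so the number $N$ of affine points satisfies
$$
N = \sum_{x \in \F_q}\bigl(1 + \chi(f(x))\bigr) = q + \sum_{x \in \F_q} \chi(f(x)),
$$
and hence the character sum equals $N - q$. The hypothesis that $f$ is not, up to a constant, a square of a polynomial ensures that this curve is nondegenerate, and the Hasse--Weil bound (equivalently, the Riemann Hypothesis for the associated function field) then bounds $|N - q|$ by a constant multiple of $q^{1/2}$ determined by the genus of the smooth model.

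The hard part is exactly this last step: extracting the clean constant $d-1$, where $d$ is the number of \emph{distinct} roots of $f$. This forces one to work with the smooth projective model rather than the naive affine curve, accounting carefully for the points at infinity and for the singularities arising from repeated roots, and bounding the genus in terms of $d$ alone. Since this is precisely the content of \cite[Theorems 5.41 and 5.48]{LN}, the most economical route in practice is to quote those theorems and merely verify that our $f$ and the quadratic character $\chi$ meet their hypotheses; a self-contained argument would instead invoke Weil's theorem for character sums (or Stepanov's elementary method) to supply the bound.
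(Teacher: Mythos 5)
Your proposal is correct, but it is worth noting that the paper does not actually prove this theorem: it presents it as a recalled classical fact, citing it as a special case of \cite[Theorems 5.41 and 5.48]{LN}, so your decision to defer the Weil-type bound to precisely those theorems reproduces the paper's treatment exactly. Where you go beyond the paper is the quadratic evaluation, for which you supply a complete and correct elementary derivation: the substitution $x = u - b/(2a)$ reduces the sum to $\sum_{u}\chi(au^2+e)$ with $e = -D/(4a) \ne 0$; the identity $\#\{u \in \F_q : u^2 = t\} = 1 + \chi(t)$ splits this into a linear piece $\sum_t \chi(at+e)$, which vanishes since $t \mapsto at+e$ permutes $\F_q$, and the piece $\sum_{t \ne 0}\chi\bigl(t^2(a+e/t)\bigr) = \sum_{s \ne a}\chi(s) = -\chi(a)$, the $t=0$ term dying because $\chi(0)=0$ --- this is in substance the proof of \cite[Theorem 5.48]{LN} itself. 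Your point-counting sketch for the general bound is likewise sound as far as it goes: with $N$ the number of affine points on $y^2 = f(x)$ one has $\sum_x \chi(f(x)) = N - q$, and for squarefree $f$ of degree $d$ the smooth model has genus $\lfloor (d-1)/2 \rfloor$, so Hasse--Weil together with the bookkeeping at infinity delivers the constant $d-1$; and you correctly flag that handling repeated roots (where one strips square factors via $\chi(t^2h) = \chi(h)$ for $t \ne 0$, at the cost of finitely many corrected terms) is exactly what the cited theorem packages up. In short: for the hard assertion you and the paper take the identical route (citation), and for the easy assertion you do strictly more than the paper by proving it.
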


We now want to count how many graphs $\cG(\lambda,X+a)$ have a fixed vertex.

\begin{proposition}   \label{prop:linear-fix}
Define the set 
$$
S_\lambda = \{a \in \F_q: \, \textrm{$\cG(\lambda,X+a)$ has a fixed vertex} \}. 
$$
Then, we have
$$
|S_\lambda| = \frac{1}{4} \big(3q + 1  + \chi(\lambda - 1) - \chi(1-\lambda) \big), 
$$
where $\chi$ is the multiplicative quadratic character of $\F_q$. In particular, we have 
$|S_\lambda| = \frac{1}{4}(3q + 1)$ if $-1$ is a square in $\F_q$, and otherwise 
$|S_\lambda| = \frac{1}{4}(3q - 1)$ or $\frac{1}{4}(3q + 3)$. 
\end{proposition}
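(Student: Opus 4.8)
The plan is to count the set $S_\lambda$ by a standard inclusion–exclusion over the two square conditions, translating each condition into a quadratic character sum that the Weil bound (Theorem~\ref{thm:Weil}) evaluates exactly. Recall that $\cG(\lambda,X+a)$ has a fixed vertex precisely when either $a+\tfrac14$ is a square or $\lambda a+\tfrac14$ is a square in $\F_q$. So $a \in S_\lambda$ fails exactly when both $a+\tfrac14$ and $\lambda a + \tfrac14$ are non-squares. First I would introduce the indicator $\tfrac12(1+\chi(t))$ for the event that a nonzero $t$ is a square, being careful about the cases $t=0$ (which happens at $a=-\tfrac14$ and $a=-\tfrac{1}{4\lambda}$, where the value is the square $0$). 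Let me set $A = \{a : a+\tfrac14 \text{ is a square}\}$ and $B = \{a : \lambda a + \tfrac14 \text{ is a square}\}$; then $|S_\lambda| = |A| + |B| - |A\cap B|$.

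The counts $|A|$ and $|B|$ are immediate: as $a$ ranges over $\F_q$, the value $a+\tfrac14$ ranges over all of $\F_q$, so exactly $(q+1)/2$ of them are squares (the $(q-1)/2$ nonzero squares plus $0$); hence $|A|=|B|=(q+1)/2$. The substantive term is $|A\cap B|$, the number of $a$ for which both $a+\tfrac14$ and $\lambda a + \tfrac14$ are squares. Writing this with indicators gives
\begin{equation}
|A\cap B| = \sum_{a\in\F_q} \frac{1+\chi(a+\tfrac14)}{2}\cdot\frac{1+\chi(\lambda a+\tfrac14)}{2} + (\text{corrections at } a=-\tfrac14,\,-\tfrac{1}{4\lambda}),
\end{equation}
where the correction accounts for the fact that $\chi(0)=0$ whereas $0$ should be counted as a square. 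Expanding the product yields four sums: the constant term contributes $q/4$, the two single-character sums $\sum_a \chi(a+\tfrac14)$ and $\sum_a \chi(\lambda a + \tfrac14)$ each vanish (a complete character sum over a nonconstant linear argument is $0$), and the main cross term is $\tfrac14\sum_{a}\chi\big((a+\tfrac14)(\lambda a+\tfrac14)\big)$.

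The cross term is where Theorem~\ref{thm:Weil} does the work: the argument $(a+\tfrac14)(\lambda a + \tfrac14)$ is a quadratic $\lambda a^2 + \tfrac14(1+\lambda)a + \tfrac{1}{16}$ with leading coefficient $\lambda$ and discriminant a nonzero multiple of $(1-\lambda)^2$ (since $\lambda\ne 1$, as $\lambda$ is a non-square), so the second part of Theorem~\ref{thm:Weil} gives the exact value $\sum_a \chi\big((a+\tfrac14)(\lambda a+\tfrac14)\big) = -\chi(\lambda)$. Assembling these, together with the careful handling of the two points where an argument vanishes — at $a=-\tfrac14$ the second factor is $\tfrac14(1-\lambda)$ contributing $\chi(1-\lambda)$ to the square-count, and at $a=-\tfrac{1}{4\lambda}$ the first factor is $\tfrac14(1-\lambda^{-1})$ contributing $\chi(\lambda-1)$ — produces a closed form for $|A\cap B|$, and hence for $|S_\lambda|=\tfrac14(3q+1+\chi(\lambda-1)-\chi(1-\lambda))$. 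The final ``in particular'' claims then follow by splitting on whether $-1$ is a square: if $\chi(-1)=1$ then $\chi(\lambda-1)=\chi(1-\lambda)$ and the character terms cancel, giving $\tfrac14(3q+1)$; if $\chi(-1)=-1$ the two terms differ by a sign and $\chi(\lambda-1)-\chi(1-\lambda)\in\{2,-2\}$, giving $\tfrac14(3q+3)$ or $\tfrac14(3q-1)$.

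**The main obstacle** will be the bookkeeping at the two exceptional points $a=-\tfrac14$ and $a=-\tfrac{1}{4\lambda}$, where one of the two factors vanishes and $\chi$ returns $0$ rather than the correct ``is-a-square'' indicator value $1$ for $0$. These boundary corrections are exactly what shift the clean main term $\tfrac14(3q+\text{(character sum)})$ into the asymmetric final formula, so I would track them explicitly by evaluating the non-vanishing factor at each point and comparing $\tfrac12(1+\chi(\cdot))$ against the true indicator. Everything else — the two single sums vanishing, the quadratic evaluating to $-\chi(\lambda)$ via Weil — is routine once the setup is fixed.
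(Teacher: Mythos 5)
Your proposal is correct and is essentially the paper's own proof up to complementation: the paper counts the complement set $T_\lambda=\{a:\cG(\lambda,X+a)\ \text{has no fixed vertex}\}$ via the indicator $\tfrac14\bigl(1-\chi(a+\tfrac14)\bigr)\bigl(1-\chi(\lambda a+\tfrac14)\bigr)$ with the same two boundary corrections at $a=-\tfrac14$ and $a=-\tfrac{1}{4\lambda}$ and the same exact evaluation $\sum_{a\in\F_q}\chi\bigl((a+\tfrac14)(\lambda a+\tfrac14)\bigr)=-\chi(\lambda)=1$ from Theorem~\ref{thm:Weil}, so your inclusion--exclusion on $|A|+|B|-|A\cap B|$ is the same computation in De Morgan's clothing. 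One sign to fix when you write up the bookkeeping: at $a=-\tfrac{1}{4\lambda}$ the surviving factor satisfies $\chi\bigl(\tfrac14(1-\lambda^{-1})\bigr)=\chi(\lambda)\chi(\lambda-1)=-\chi(\lambda-1)$ (not $\chi(\lambda-1)$, since $\chi(\lambda)=-1$), and it is exactly this extra $\chi(\lambda)$ that makes the correction term $\tfrac14\bigl(1-\chi(\lambda-1)\bigr)$ and produces the asymmetric $+\chi(\lambda-1)-\chi(1-\lambda)$ in your (correctly) stated final formula.
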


\begin{proof}
We first define the set 
$$
T_\lambda = \{a \in \F_q: \, \textrm{$\cG(\lambda,X+a)$ has no fixed vertex} \}.
$$
Since $S_\lambda = \F_q \setminus T_\lambda$, it is equivalent to show that
$$
  |T_\lambda|  =  \frac{1}{4} \big(q - 1  - \chi(\lambda - 1) + \chi(1-\lambda) \big).
$$

Note that by convention, $\chi(0)=0$. 
For any $a \in \F_q$, we have that $a \in T_\lambda$ if and only if
both $a+ \frac{1}{4}$ and $\lambda a+ \frac{1}{4}$ are not squares, that is,
$$
\chi(a+ \frac{1}{4}) = \chi(\lambda a+ \frac{1}{4}) = -1.
$$
So, we obtain
\begin{align*}
|T_\lambda|  = & \frac{1}{4} \sum_{a \in \F_q} \big(1-\chi(a+ \frac{1}{4})\big) \big(1-\chi(\lambda a+ \frac{1}{4})\big) \\
& - \frac{1}{4} (1-\chi(1-\lambda)) - \frac{1}{4} (1+\chi(\lambda -1)), 
\end{align*}
where the last two terms come from the two cases when $a+ \frac{1}{4} = 0$ or $\lambda a+ \frac{1}{4} = 0$. 
Then, expanding the brackets we further have
\begin{align*}
|T_\lambda| = \frac{q}{4} - \frac{1}{2} - \frac{1}{4} \chi(\lambda - 1) + \frac{1}{4} \chi(1- \lambda) + 
\frac{1}{4} \sum_{a \in \F_q} \chi\big((a+ \frac{1}{4})(\lambda a+ \frac{1}{4})\big),
\end{align*}
where we use the fact $\sum_{a \in \F_q} \chi(a) =0$.
Using Theorem~\ref{thm:Weil} and noticing that $\lambda$ is a non-square element, we have
\begin{align*}
 \sum_{a \in \F_q} \chi\big((a+ \frac{1}{4})(\lambda a+ \frac{1}{4})\big) 
& =  \sum_{a \in \F_q} \chi\big(\lambda a^2 + \frac{1}{4}(\lambda + 1)a + \frac{1}{16}\big) \\
& = - \chi(\lambda) = 1. 
\end{align*}
Hence, we obtain
$$
  |T_\lambda|  =  \frac{1}{4} \big(q - 1 - \chi(\lambda - 1) + \chi(1-\lambda)  \big).
$$
This completes the proof.
\end{proof}

\subsection{Small connected components}
\label{sec:linear-conn}

Here we want to determine small connected components  of the graphs $\cG(\lambda,X+a)$.
This implies some kinds of unconnected graphs.
The later computations suggest that they  almost cover all the unconnected graphs in the linear case.

\begin{proposition}   \label{prop:linear-com2}
For  any $a \in \F_q^*$, the graph $\cG(\lambda,X+a)$ has a connected component with two vertices
if and only if $\lambda \ne -1$ and  $a = 2(\lambda +1)/(\lambda -1)^2$.
In particular, if $\lambda \ne -1$ and  $a = 2(\lambda +1)/(\lambda -1)^2$,
then the vertices $2/(1-\lambda), 2/(\lambda -1)$ form a connected component in $\cG(\lambda,X+a)$.
\end{proposition}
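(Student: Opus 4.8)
The plan is to work with the \emph{predecessor} description of the edges rather than the successor one, since it is purely algebraic and avoids any square/non-square bookkeeping. Writing $f(X)=X+a$, there is an edge $x\to y$ if and only if $y^2=x+a$ or $\lambda y^2=x+a$, equivalently $x\in\{y^2-a,\lambda y^2-a\}$. Hence the predecessors of a vertex $y\ne 0$ are exactly the two values $y^2-a$ and $\lambda y^2-a$, which are distinct because $(1-\lambda)y^2\ne 0$ (recall $\lambda\ne 1$ as $\lambda$ is a non-square). On the other side, for $x\ne -a$ the successors of $x$ form a pair $\{y_x,-y_x\}$ with $y_x\ne 0$, since the out-degree is $2$ by Proposition~\ref{prop:perm-inout}.

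First I would pin down which vertices a two-vertex component $C=\{u,v\}$ can contain. Being a connected component, $C$ is closed, so $\mathrm{outdeg}(u)+\mathrm{outdeg}(v)=\mathrm{indeg}(u)+\mathrm{indeg}(v)$. Feeding in the degrees from Proposition~\ref{prop:perm-inout} (valid since $X+a$ is a permutation polynomial and $f(0)=a\ne 0$), the only degree-compatible possibilities are that $u,v$ are both ordinary vertices, or that $\{u,v\}=\{0,-a\}$; the latter is ruled out directly because the successors of $0$ form a pair $\pm y$ with $y\ne 0$, which cannot sit inside $\{0,-a\}$. Thus $0,-a\notin C$, and in particular both vertices have in- and out-degree $2$.

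Next, since the successors of $u$ are $\{y_u,-y_u\}$ with $y_u\ne 0$ and both must lie in the two-element set $\{u,v\}$, I obtain $\{u,v\}=\{u,-u\}$, i.e. $v=-u$ (and $u\ne 0$). Now closedness under predecessors forces $\{u^2-a,\lambda u^2-a\}\subseteq\{u,-u\}$, and as the left-hand side consists of two distinct elements this is an equality. This splits into the two cases $(u^2-a,\lambda u^2-a)=(u,-u)$ and $(u^2-a,\lambda u^2-a)=(-u,u)$. Each case is a pair of equations; subtracting them gives $u\big((\lambda-1)u\pm 2\big)=0$, and since $u\ne 0$ this determines $u=2/(1-\lambda)$, respectively $u=2/(\lambda-1)$. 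Substituting back yields in both cases $a=2(\lambda+1)/(\lambda-1)^2$, and the requirement $a\ne 0$ forces $\lambda\ne -1$. This proves necessity and simultaneously identifies the two vertices as $2/(1-\lambda)$ and $2/(\lambda-1)$.

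For the converse I would verify that when $\lambda\ne -1$ and $a=2(\lambda+1)/(\lambda-1)^2$ the set $\{u,-u\}$ with $u=2/(1-\lambda)$ really is a connected component: check that $u\ne 0$ and $u,-u\notin\{0,-a\}$ (each reduces to $2=0$ or $\lambda=0$, both impossible), that both predecessors $u^2-a,\lambda u^2-a$ of $u$ (and of $-u$) as well as the successors of $u$ and of $-u$ stay inside $\{u,-u\}$, and that there is an edge between $u$ and $-u$ so the pair is weakly connected. The main obstacle is purely organizational: keeping track of the square/non-square alternative hidden in the successor relation. Passing to the predecessor form, which records both relations $y^2-a$ and $\lambda y^2-a$ simultaneously, removes this difficulty, while the degree count from Proposition~\ref{prop:perm-inout} is what lets me dispose of the exceptional vertices $0$ and $-a$ cheaply.
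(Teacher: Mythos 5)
Your proposal is correct and follows essentially the same route as the paper: both arguments reduce a two-vertex component to a pair $\{u,-u\}$, extract the defining equations $u^2-a=\pm u$, $\lambda u^2-a=\mp u$, solve to get $u=2/(1-\lambda)$ (or $2/(\lambda-1)$) and $a=2(\lambda+1)/(\lambda-1)^2$ with $\lambda\ne-1$ forced by $a\ne 0$, and finish the converse by direct verification. Your predecessor-based bookkeeping and the degree count ruling out $\{0,-a\}$ merely make explicit what the paper compresses into its appeal to Proposition~\ref{prop:perm-inout}.
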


\begin{proof}
First, suppose that the graph $\cG(\lambda,X+a)$ has a connected component, say $C$, with two vertices.
By construction and using Proposition~\ref{prop:perm-inout}, both vertices in $C$ have a loop,  and they form a cycle.
Moreover, if one vertex is $x \in \F_q$, then the other must be $-x \in \F_q$.
Note that $x \ne 0$.
Since the vertices $x$ and $-x$ form a cycle, without loss of generality we can assume that
\begin{equation}  \label{eq:linear-com2}
x^2=x+a, \qquad \lambda x^2 = -x +a.
\end{equation}
If $\lambda = -1$, we have $a=0$, which contradicts with $a \in \F_q^*$. So, we must have $\lambda \ne -1$.
From \eqref{eq:linear-com2}, we deduce that
$$
x = \frac{2}{1-\lambda}, \qquad a = \frac{2(\lambda +1)}{(\lambda -1)^2}.
$$

Conversely, if $\lambda \ne -1$ and  $a = 2(\lambda +1)/(\lambda -1)^2$, then the vertex $x = 2/(1-\lambda)$ satisfies \eqref{eq:linear-com2},
and thus the vertices $x$ and $-x$ form a connected component in the graph $\cG(\lambda,X+a)$.
\end{proof}

We remark that if $-1$ is a non-square element in $\F_q$,
then the graph $\cG(\lambda,X)$ has a connected component with two vertices if and only if $\lambda = -1$
(in fact these two vertices are $1, -1$).

\begin{proposition}   \label{prop:linear-com3}
For any $a \in \F_q$, the graph $\cG(\lambda,X+a)$ has a connected component with three vertices
if and only if either $\lambda = 2, a = 1$, or $\lambda = 1/2, a = 2$.
In particular, if $2$ is a non-square element in $\F_q$,
then the vertices $0, 1$ and $-1$ form a connected component in $\cG(2,X+1)$,
and the vertices $0, 2$ and $-2$ form a connected component in $\cG(1/2,X+2)$.
\end{proposition}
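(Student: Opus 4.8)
The plan is to prove both directions by exploiting the single structural fact that, whenever $f(x)\neq 0$, the two out-neighbours of $x$ in $\cG(\lambda,X+a)$ are negatives of one another: if $x\to y$ then $x\to -y$. I first record from Proposition~\ref{prop:perm-inout} that, for $a\neq 0$, every vertex other than $0$ and $-a$ has in-degree and out-degree $2$, while $0$ has in-degree $1$ and out-degree $2$, and $-a=f^{-1}(0)$ has in-degree $2$ and out-degree $1$; moreover the unique predecessor of $0$ is $-a$, and the unique successor of $-a$ is $0$.

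For the ``if'' direction (which simultaneously proves the ``in particular'' statement) I would verify directly that, when $2$ is a non-square, $\{0,1,-1\}$ is closed under taking successors in $\cG(2,X+1)$ and $\{0,2,-2\}$ is closed in $\cG(1/2,X+2)$. For instance, in $\cG(2,X+1)$ one checks $0\to\pm1$ (as $f(0)=1$ is a square while $1/\lambda$ is not), $1\to\pm 1$ (as $f(1)=2=\lambda$, so $2/\lambda=1$ is a square), and $-1\to 0$. Counting the $5$ outgoing edges and comparing with the total in-degree $1+2+2=5$ shows no edge enters the set either, so it is a full connected component; the computation for $\cG(1/2,X+2)$ is identical.

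The substance is the ``only if'' direction. Suppose $C$ is a connected component with $|C|=3$. The key step, and the main obstacle, is to show $0\in C$ (whence $a\neq 0$, since $\{0\}$ is its own loop-component when $a=0$). Here I argue by contradiction: if $0\notin C$, then $-a\notin C$ as well (otherwise $0=f(-a)$ would lie in $C$), so every vertex of $C$ has out-degree $2$ with its two successors forming a negation pair. Since $|C|=3$ is odd and $0\notin C$, the set $C$ cannot be closed under $x\mapsto -x$, so some $t\in C$ has $-t\notin C$. But any predecessor $v$ of $t$ lies in $C$ (a component admits no edges from outside), and, having a negation pair of successors, it satisfies $v\to -t$ too; this sends an edge out of $C$, a contradiction. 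Hence $0\in C$. Then, using that the unique predecessor of $0$ is $-a$ (so $-a\in C$) and that the two successors $\pm y_0$ of $0$ must lie in $C\setminus\{0\}=\{-a,w\}$, I obtain $\{y_0,-y_0\}=\{-a,w\}$ and therefore $w=a$, i.e. $C=\{0,a,-a\}$.

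Finally I would impose closure of $\{0,a,-a\}$ at its two vertices of out-degree $2$. At $0$ the successors $\pm a$ force $a^2=y_0^2\in\{a,\,a/\lambda\}$, giving $a=1$ if $a$ is a square and $\lambda a=1$ if $a$ is a non-square; at $a$ the successors $\pm a$ force $a^2\in\{2a,\,2a/\lambda\}$, giving $a=2$ if $2a$ is a square and $\lambda a=2$ otherwise. Splitting into cases according to whether $a$ is a square and whether $2$ is a square—recalling that $\lambda$ being a non-square makes exactly one of $a,a/\lambda$ (and one of $2a,2a/\lambda$) a square—and discarding the branch in which the two conditions would force $\lambda a=1$ and $\lambda a=2$ simultaneously, these constraints collapse to exactly $(\lambda,a)=(2,1)$ or $(\lambda,a)=(1/2,2)$, each of which forces $2$ to be a non-square. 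This matches the two families identified above and completes the proof.
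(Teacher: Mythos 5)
Your proposal is correct and takes essentially the same route as the paper's proof: both establish $0\in C$ via closure of a component under $x\mapsto -x$ together with the oddness of $|C|=3$, pin down $C=\{0,a,-a\}$ using the unique predecessor $-a$ of $0$, and then extract the same two systems (namely $a^2=a,\ \lambda a^2 = 2a$ versus $\lambda a^2=a,\ a^2=2a$), yielding $(\lambda,a)=(2,1)$ or $(1/2,2)$. You merely make explicit two points the paper leaves implicit: the predecessor argument justifying negation-closure (the paper asserts it ``by construction''), and the degree-counting verification that $\{0,\pm 1\}$ and $\{0,\pm 2\}$ admit no incoming edges in the sufficiency direction.
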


\begin{proof}
It is easy to check that if $\lambda = 2$ and  $a = 1$ (by the assumption on $\lambda$, $2$ is a non-square element in $\F_q$),
the vertices $0, 1$ and $-1$ form a connected component in $\cG(2,X+1)$.
Moreover, if $\lambda = 1/2$ and  $a = 2$ ($2$ is a non-square element in $\F_q$),
the vertices $0, 2$ and $-2$ form a connected component in $\cG(1/2,X+2)$.
This shows the sufficiency. It remains to show the necessity.

Now, suppose that the graph $\cG(\lambda,X+a)$ has a connected component, say $C$, with three vertices.
Note that by construction if $x \in \F_q$ is a vertex in $C$, then so is $-x$.
So, the vertex $0$ must be in $C$.

Since there is an edge from $-a$ to $0$,  by construction we have that the three vertices of $C$ are $0, a$ and $-a$
(so $a \ne 0$), and there are edges from $0$ to $a$ and $-a$.
Moreover, there is an edge from $a$ to $-a$.
These give either (noticing $a \ne 0$)
$$
a^2 = a, \qquad  \lambda a^2 = a +a,
$$
or
$$
\lambda a^2 = a, \qquad   a^2 = a +a.
$$
Hence, we deduce that either $\lambda = 2, a = 1$, or $\lambda = 1/2, a = 2$.
\end{proof}

We remark that by Proposition~\ref{prop:linear2}, the graph $\cG(2,X+1)$ is isomorphic to the graph $\cG(1/2,X+2)$.
By Proposition~\ref{prop:linear-com3} we also know that if $2$ is a square element in $\F_q$, then connected components with three vertices
can not occur in the graphs $\cG(\lambda,X+a)$ over $\F_q$ (note that $\lambda$ is set to be a non-square element in $\F_q$ throughout the paper).

However, there are few connected components having four vertices.

\begin{proposition}   \label{prop:linear-com4}
For any $a \in \F_q^*$, the graph $\cG(\lambda,X+a)$ has no connected component with four vertices.
\end{proposition}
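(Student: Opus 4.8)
The plan is to argue by contradiction, exploiting the very clean description of predecessors in $\cG(\lambda,X+a)$. Writing $f(X)=X+a$, there is an edge $v\to u$ exactly when $u^2=v+a$ or $\lambda u^2=v+a$, so the two predecessors of a vertex $u$ are $u^2-a$ and $\lambda u^2-a$. These are distinct whenever $u\ne 0$ (since $\lambda\ne 1$ because $\lambda$ is a non-square), and, crucially, they depend only on $u^2$; hence $u$ and $-u$ always share the same pair of predecessors, which shows that every connected component is closed under $u\mapsto -u$.

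First I would pin down the shape of a hypothetical four-vertex component $C$. Since $C$ is closed under negation and $q$ is odd, a component containing $0$ has odd cardinality, so $0\notin C$ and $C=\{x,-x,w,-w\}$ with these four elements distinct and nonzero. Next, using Proposition~\ref{prop:perm-inout} together with the fact that inside a component the sum of out-degrees equals the sum of in-degrees, I would exclude $f^{-1}(0)=-a$ from $C$: were $-a\in C$, then $-a$ (out-degree $1$) and its partner $a$ would make the out-degree sum $7$ while the in-degree sum is $8$. Thus every vertex of $C$ has in-degree and out-degree exactly $2$, and both predecessors $u^2-a,\lambda u^2-a$ of each vertex lie in $C$.

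The heart of the argument is to organize the in-edges around the pairs $P=\{x,-x\}$ and $Q=\{w,-w\}$. Each vertex's two successors form a $\pm$-pair, hence equal $P$ or $Q$, so every vertex is a parent of exactly one pair; a counting argument then shows exactly two vertices feed $P$ and two feed $Q$, and the parent-set of $P$, namely $\{x^2-a,\lambda x^2-a\}$, is disjoint from the parent-set of $Q$, namely $\{w^2-a,\lambda w^2-a\}$, so together they partition $C$ into two $2$-subsets. There are three such partitions. If the partition is $\{P,Q\}$ itself, then either each pair is its own parent-set, forcing no edges between $P$ and $Q$ and contradicting connectedness, or the roles are swapped, in which case summing each parent-set gives $(1+\lambda)x^2=2a=(1+\lambda)w^2$, hence $x^2=w^2$, a contradiction. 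In the two remaining ``mixed'' partitions, the complement relation forces the parent-set of $Q$ to be the negatives of the parent-set of $P$, so the two products are automatically equal: $(x^2-a)(\lambda x^2-a)=(w^2-a)(\lambda w^2-a)$. Cancelling the factor $x^2-w^2\ne 0$ yields $\lambda(x^2+w^2)=(1+\lambda)a$, and combining this with the sum relation $(1+\lambda)(x^2+w^2)=4a$ (valid since $\lambda\ne -1$, as $\lambda=-1$ would force $a=0$) and eliminating $x^2+w^2$ collapses to $(\lambda-1)^2=0$, i.e. $\lambda=1$, impossible for a non-square $\lambda$.

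I expect the main obstacle to be the bookkeeping of the reduction step, namely establishing cleanly that neither $0$ nor $-a$ can lie in $C$ and that the parent-sets genuinely partition $C$, so that the symmetric-function computation applies in every case. Once the predecessor description and the parent-set partition are in place, the decisive device is to work with the \emph{sum} and the \emph{product} of each parent-set rather than tracking which predecessor equals which vertex: the product identity dispatches both mixed partitions simultaneously and delivers the contradiction $\lambda=1$ with no further casework.
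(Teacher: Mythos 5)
Your proof is correct, but it is organized quite differently from the paper's. The paper first uses the degree constraints of Proposition~\ref{prop:perm-inout} to reduce a hypothetical four-vertex component to two explicit shapes (Figure~\ref{fig:linear-4}: no fixed vertex, or two fixed vertices), and then solves the resulting polynomial systems directly, with the second shape splitting into four subcases yielding $\lambda=1$ or $\lambda^2=-1,\,a=0$. You instead exploit the negation symmetry systematically: successors come in $\pm$-pairs, so the parent-sets $\{x^2-a,\lambda x^2-a\}$ of $P=\{x,-x\}$ and $\{w^2-a,\lambda w^2-a\}$ of $Q=\{w,-w\}$ partition the component, leaving only three partitions to examine; and in the mixed partitions you work with the elementary symmetric functions (sum and product) of the parent-sets rather than tracking which predecessor equals which vertex, which collapses the paper's four subcases into the single identity $4\lambda=(1+\lambda)^2$, i.e.\ $(\lambda-1)^2=0$. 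What your route buys is exactly this economy: no figure-based shape enumeration and no matching casework. What it gives up is the byproduct the paper extracts from its explicit solutions, namely the classification of four-vertex components of $\cG(\lambda,X)$ when $a=0$ (the $\lambda=-1,\,x^3=1$ and $\lambda^2=-1$ families), which is stated as a separate proposition immediately afterwards; your early division by $a\ne 0$ erases those solutions. One small expository point: in the swap sub-case of the partition $\{P,Q\}$, the step from $(1+\lambda)x^2=2a=(1+\lambda)w^2$ to $x^2=w^2$ requires $\lambda\ne-1$, and you should say there (as you do for the mixed partitions) that $\lambda=-1$ forces $2a=0$, hence $a=0$, a contradiction on its own; the observation is already implicit in your parenthetical, so this is a matter of placement, not a gap.
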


\begin{proof}
By contradiction, we assume that the graph $\cG(\lambda,X+a)$
has a connected component, say $C$, having four vertices.
By Proposition~\ref{prop:perm-inout}, it is easy to see that $C$ has only two possible cases; see Figure~\ref{fig:linear-4}.

\begin{figure}[!htbp]
\begin{center}
\setlength{\unitlength}{1cm}
\begin{picture}(20,2)
\multiput(2.5,1.5)(1.8,0){2}{$\bullet$}

\put(2.75, 1.7){\vector(1,0){1.5}}
\put(4.25, 1.5){\vector(-1,0){1.5}}

\multiput(2.5,-0.3)(1.8,0){2}{$\bullet$}

\put(2.5, 1.45){\vector(0,-1){1.5}}
\put(2.7, -0.05){\vector(0,1){1.5}}

\put(4.5, -0.05){\vector(0,1){1.5}}
\put(4.3, 1.45){\vector(0,-1){1.5}}

\put(4.25, -0.3){\vector(-1,0){1.5}}
\put(2.75, -0.1){\vector(1,0){1.5}}

\put(2.2,1.5){$x$}
\put(4.6,1.5){$-y$}
\put(2.2,-0.3){$y$}
\put(4.55,-0.3){$-x$}

\multiput(7.5,1.5)(1.8,0){2}{$\bullet$}
\multiput(7.5,-0.3)(1.8,0){2}{$\bullet$}

\put(7.6, 1.45){\vector(0,-1){1.5}}
\put(9.4, 1.45){\vector(0,-1){1.5}}
\put(9.25, -0.3){\vector(-1,0){1.5}}
\put(9.25, 0){\vector(-1,1){1.5}}
\put(7.75, -0.1){\vector(1,0){1.5}}
\put(7.75, 0){\vector(1,1){1.5}}

\put(7.65,1.7){\oval(0.3,0.8)[t]}
\put(7.8, 1.7){\vector(0,-1){0}}
\put(9.35,1.7){\oval(0.3,0.8)[t]}
\put(9.2, 1.7){\vector(0,-1){0}}

\put(7.2,1.5){$x$}
\put(9.6,1.5){$y$}
\put(6.8,-0.3){$-x$}
\put(9.55,-0.3){$-y$}
\end{picture}
\end{center}
\caption{Two cases of a component with four vertices}
\label{fig:linear-4}
\end{figure}
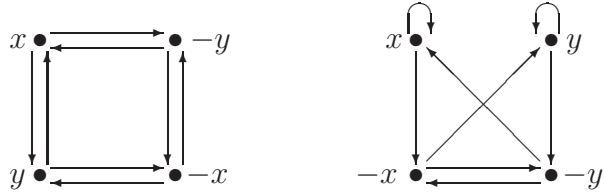

We now consider the first case that there is no fixed vertex.
Without loss of generality, we can assume that
$$
y^2 = x + a, \qquad \lambda y^2 = -x+a.
$$
Moreover, either
$$
x^2 = y + a, \qquad \lambda x^2 = -y+a,
$$
or
$$
x^2 = -y + a, \qquad \lambda x^2 = y+a.
$$
Then, noticing $x \ne \pm y$, we obtain $\lambda = -1, a = 0, x^3 =1$ and $x \ne 1$.
This contradicts with $a \ne 0$.
So, the first case cannot happen.

For the second case in Figure~\ref{fig:linear-4},
 noticing $x \ne \pm y$, we only need to consider the following four subcases:
\begin{itemize}
\item[(1)]  $x^2 = x + a, \qquad y^2 = -x+a, \qquad \lambda y^2 = y + a, \qquad \lambda x^2 = -y + a$;

\item[(2)]  $x^2 = x + a, \qquad \lambda y^2 = -x+a, \qquad  y^2 = y + a, \qquad \lambda x^2 = -y + a$;

\item[(3)]  $\lambda x^2 = x + a, \qquad  y^2 = -x+a, \qquad \lambda y^2 = y + a, \qquad  x^2 = -y + a$;

\item[(4)]  $\lambda x^2 = x + a, \qquad \lambda y^2 = -x+a, \qquad  y^2 = y + a, \qquad  x^2 = -y + a$.

\end{itemize}
By direct calculations,
from Cases (2) and (3) we obtain $\lambda = 1$, which contradicts with the assumption that $\lambda$ is non-square;
and Case (1) gives $\lambda^2 = -1, a = 0, x=1, y= -\lambda$; Case (4) gives $\lambda^2 = -1, a = 0, x = -\lambda, y =1$.
Hence,  the second case also cannot happen (due to  $\lambda$ non-square and $a \ne 0$).

Therefore, there is no connected component with four vertices in the graph $\cG(\lambda,X+a)$ with $a \in \F_q^*$.
\end{proof}

When proving Proposition~\ref{prop:linear-com4}, we in fact have obtained the following result
about the graph $\cG(\lambda,X)$.

\begin{proposition}
If $-1$ is a non-square element in $\F_q$,
then the graph $\cG(\lambda,X)$ has a connected component with four vertices if and only if $3 \mid q-1$ and  $\lambda = -1$
(in fact these four vertices are $x, -x, 1/x, -1/x$, where $x^3=1$ and $x \ne 1$, corresponding to the first case in Figure~\ref{fig:linear-4}).
Otherwise, if $-1$ is a square in $\F_q$,
then the graph $\cG(\lambda,X)$ has a connected component with four vertices if and only if $4 \mid q-1$ and $\lambda^2 = -1$
(in fact these four vertices are $1, -1, \lambda, -\lambda$, corresponding to the second case in Figure~\ref{fig:linear-4}).
\end{proposition}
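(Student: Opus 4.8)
The plan is to read this statement off directly from the case analysis already carried out in the proof of Proposition~\ref{prop:linear-com4}, now run with $a=0$ instead of $a\in\F_q^*$. Recall that there we showed, using Proposition~\ref{prop:perm-inout}, that any four-vertex connected component of a graph $\cG(\lambda,X+a)$ must look like one of the two configurations in Figure~\ref{fig:linear-4}, and that the defining equations in each configuration force $a=0$. Concretely, the first (loop-free) configuration forced $\lambda=-1$, $x^3=1$ and $x\neq 1$, while in the second configuration Cases~(2) and~(3) gave the excluded value $\lambda=1$ and Cases~(1) and~(4) gave $\lambda^2=-1$ with vertex set $\{1,-1,\lambda,-\lambda\}$. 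The point now is that for $f=X$ these are no longer contradictions but precisely the candidate components, so it remains only to decide which of them actually occur for a given non-square $\lambda$, and this is governed entirely by whether $-1$ is a square.

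First I would treat the branch in which $-1$ is a non-square. Here the second configuration cannot arise, since $\lambda^2=-1$ would exhibit $-1$ as a square; so only the first configuration is available. It requires $\lambda=-1$, which is indeed a legitimate non-square choice, and it produces the four vertices $x,-x,1/x,-1/x$ with $x$ a primitive cube root of unity. Such an $x$ exists in $\F_q$ if and only if $3\mid q-1$. I would then check the two routine points that make this a genuine four-vertex component: that $x,-x,1/x,-1/x$ are pairwise distinct (using $x\neq 0,\pm 1$ and $x^2=1/x$), and that this set is closed under the edge relation of $\cG(-1,X)$, i.e. each of its four vertices has both of its two in-neighbours and both of its two out-neighbours inside the set (this is the explicit verification $(1/x)^2=x$ together with its symmetric variants). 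This establishes the stated equivalence in the non-square branch.

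Next I would treat the branch in which $-1$ is a square, equivalently $4\mid q-1$. Now the first configuration is impossible, because $\lambda=-1$ would be a square and hence not an admissible $\lambda$. Thus only the second configuration can occur, and it requires exactly $\lambda^2=-1$; since $-1$ is a square this condition is consistent (a square root of $-1$ exists precisely when $4\mid q-1$). The resulting component is $\{1,-1,\lambda,-\lambda\}$, and as before I would verify that these four vertices are distinct (using $\lambda\neq 0,\pm 1$, which follows from $\lambda^2=-1$ and $q$ odd) and that the set is closed under the edge relation of $\cG(\lambda,X)$. Combining the two branches yields the proposition.

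The substance beyond quoting Proposition~\ref{prop:linear-com4} is therefore light, and the main obstacle is simply the bookkeeping of the square/non-square constraints together with the closure check: one must keep straight that $\lambda$ is required to be a non-square, that $-1$ is a square exactly when $4\mid q-1$, and that a primitive cube root of unity exists exactly when $3\mid q-1$, while also confirming that each listed quadruple really forms a full closed component rather than a degenerate smaller set. A minor subtlety worth noting is that $\lambda^2=-1$ with $\lambda$ non-square further pins down $q$ modulo $8$; but since $\lambda$ is fixed in the statement, we only need to verify the displayed equivalence for that $\lambda$, not the existence of such a $\lambda$, so this subtlety does not enter the proof.
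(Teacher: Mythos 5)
Your proposal is correct and takes essentially the same route as the paper, which offers no separate argument but states that the proposition was ``in fact obtained'' in the proof of Proposition~\ref{prop:linear-com4}: you rerun that case analysis with $a=0$, so that its former contradictions ($\lambda=-1$, $x^3=1$, $x\neq 1$ in the first configuration; $\lambda^2=-1$ with vertex set $\{1,-1,\lambda,-\lambda\}$ in the second) become the actual components, and then sort the two configurations by whether $-1$ is a square in $\F_q$. Your additional verifications (distinctness of the four vertices, closure under the edge relation, and the observation that existence of a fixed non-square $\lambda$ with $\lambda^2=-1$ need not be addressed) are routine and fill in exactly what the paper leaves implicit.
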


We remark that connected components with five vertices exist.
For example, in the graph $\cG(3,X+2)$ over $\F_7$ the vertices $0,2,3,4,5$ form a connected component,
and the graph $\cG(10,X+12)$ over $\F_{17}$ has a connected component with 5 vertices (that is, $0, 4, 5, 12, 13$).
Moreover, the graph $\cG(5,X+8)$ over $\F_{17}$ has a connected component with 6 vertices (that is, $1, 3, 7, 10, 14, 16$),
and the graph $\cG(3,X+13)$ over $\F_{31}$ has a connected component with 6 vertices (that is, $3, 4, 14, 17, 27, 28$).

\subsection{Computations concerning connectedness}
\label{sec:linear-com}

Recall that $p$ is an odd prime.
Here, we want to make some computations for the graphs $\cG(\lambda,X+a)$ over $\F_p$ concerning its connectedness.
From \cite[Section 2]{MSSS}, the numerical results suggest that almost all the functional graphs generated by polynomials $f(X)=X^2 +a$
($a$ runs over $\F_p^*$) are weakly unconnected.
However, our computations suggest that almost all the graphs
$\cG(\lambda,X+a)$ are connected (in fact, strongly connected by Proposition~\ref{prop:perm-conn}).

By Proposition~\ref{prop:linear2} we do not need to consider all the non-square elements $\lambda$.
We first identify the elements of $\F_p$ as the set $\{0,1, \ldots, p-1\}$, and then
 define $\N_p$ to be the subset of non-square elements in $\F_p$ such that for any non-square element $\lambda$,
only the smaller one of  $\lambda$ and $\lambda^{-1}$ is contained in $\N_p$.
Clearly, for the size of $\N_p$, we have
\begin{equation}
\label{eq:Np}
|\N_p| =
\left\{\begin{array}{ll}
 (p-1)/4 & \text{if $p \equiv 1 \pmod{4}$,}\\
 (p+1)/4 & \text{if $p \equiv 3 \pmod{4}$,}\\
\end{array}\right.
\end{equation}
where we use the fact that $-1$ is non-square modulo $p$ if and only if $p \equiv 3 \pmod{4}$.
So, here we make computations for the graphs $\cG(\lambda,X+a)$ when $\lambda$ runs over $\N_p$ and $a$ runs over $\F_p^*$.
By Propositions \ref{prop:linear1} and \ref{prop:linear2}, this indeed includes all the linear cases over $\F_p$ except the case $\cG(\lambda,X)$.

Let $C_1(p)$ (respectively, $U_1(p)$) be the number of connected (respectively, unconnected) graphs
among all the graphs $\cG(\lambda,X+a)$ when $\lambda$ runs over $\N_p$ and $a$ runs over $\F_p^*$,
and let $R_1(p)$ be the ratio of connected graphs, that is
$$
R_1(p) = \frac{C_1(p)}{C_1(p) + U_1(p)}.
$$
In Table~\ref{tab:linear-conn1} we record the first five decimal digits of the ratio $R_1(p)$,
and we will do the same for all the other ratios and the average numbers.
Further, let $M_1(p)$ be the maximum number of  connected components  of an unconnected graph counted in $U_1(p)$.

Define
\begin{equation}
\label{eq:Lp}
L(p) =
\left\{\begin{array}{ll}
 (p-1)/4 & \text{if $p \equiv 1 \pmod{8}$,}\\
 (p+1)/4 & \text{if $p \equiv 3 \pmod{8}$,}\\
 (p+3)/4 & \text{if $p \equiv 5 \pmod{8}$,}\\
 (p-3)/4 & \text{if $p \equiv 7 \pmod{8}$.}
\end{array}\right.
\end{equation}
It is well-known that $2$ is a non-square element modulo $p$ if and only if $p \equiv 3,5 \pmod{8}$.
In view of the construction of $\N_p$, we always have $1/2 \pmod{p} \not\in \N_p$.
So,  the number of the unconnected graphs counted in $U_1(p)$ and  described as in Propositions~\ref{prop:linear-com2} and \ref{prop:linear-com3}
is equal to $L(p)$ when $p > 5$ (using also \eqref{eq:Np}).
Hence, we have
\begin{equation}
U_1(p) \ge L(p),  \quad p > 5.
\end{equation}
Notice that when $q=5$, the graphs in Propositions~\ref{prop:linear-com2} and \ref{prop:linear-com3} coincide.

From Table~\ref{tab:linear-conn1}, we can see that almost all the graphs $\cG(\lambda,X+a)$ are connected,
and $U_1(p)$ is quite close to $L(p)$.
Moreover, the data suggest that each unconnected graph $\cG(\lambda,X+a)$ with $a \ne 0$ has exactly two connected components,
and its small connected component usually has exactly two vertices.

\begin{table}[!htbp]
\centering
\begin{tabular}{|c|c|c|c|c|c|}
\hline
$p$ & $C_1(p)$ & $U_1(p)$ & $L(p)$ & $M_1(p)$ & $R_1(p)$ \\ \hline

31 & 232 & 8   &  7 & 2  & 0.96666  \\ \hline

107 & 2835 & 27  & 27  & 2 & 0.99056 \\ \hline

523 & 68251 & 131  & 131 & 2 & 0.99808  \\ \hline

1009 & 253764 & 252  & 252 & 2  & 0.99900 \\ \hline

1511 & 570403  & 377  & 377 & 2 & 0.99933 \\ \hline

2029 & 1027688 & 508  & 508 & 2 & 0.99950 \\ \hline

2521 & 1586970 & 630  & 630 & 2 & 0.99960 \\ \hline

3037 & 2303564 & 760  & 760 & 2 & 0.99967 \\ \hline

4049 & 4095564  &  1012 & 1012 & 2 & 0.99975 \\ \hline

5003 & 6256251 & 1251  & 1251  & 2 & 0.99980 \\ \hline
\end{tabular}
\bigskip
\caption{Counting connected graphs in linear case}
\label{tab:linear-conn1}
\end{table}

\begin{conjecture}  \label{conj:linear-conn0}
Almost all the graphs $\cG(\lambda,X+a)$ over $\F_p$ are connected when $p$ goes to infinity.
\end{conjecture}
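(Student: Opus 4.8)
The plan is to reduce the conjecture to a counting statement and then to bound the number of disconnected graphs by stratifying according to the size of the smallest connected component. Throughout, reverse the directions of all edges: as noted after Proposition~\ref{prop:linear2}, the graph $\cG(\lambda,X+a)$ then becomes the graph on $\F_p$ in which every vertex $s$ has the two out-neighbours $\sigma(s)=s^2-a$ and $\tau(s)=\lambda s^2-a$, and weak connectedness is preserved. Writing $c(\lambda,a)$ for the number of (weakly) connected components, connectedness is the statement $c(\lambda,a)=1$, and by Proposition~\ref{prop:perm-conn} every component is strongly connected. Since $c(\lambda,a)\ge 1$ always, it suffices to prove
$$
\sum_{\lambda\in\N_p}\ \sum_{a\in\F_p^*}\bigl(c(\lambda,a)-1\bigr)=o(p^2),
$$
because the left-hand side dominates the number of disconnected pairs $(\lambda,a)$, while the total number of admissible pairs is $\sim p^2/4$.

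Next I would bound the left-hand side by counting small components. A proper nonempty union of components is a set $S\subsetneq\F_p$ closed under $\sigma,\tau$ and under their preimages; taking the smaller side of a disconnection, there is always a component of size $k$ with $1\le k\le p/2$. Since the preimages of $w$ are $\pm\sqrt{w+a}$ or $\pm\sqrt{(w+a)/\lambda}$, closure forces $S=-S$, so such a component is a symmetric set of $k$ vertices satisfying the $2k$ polynomial relations coming from its $\sigma$- and $\tau$-edges. For each fixed bounded $k$ I would show that the number of pairs $(\lambda,a)$ admitting a component of size $k$ is $O_k(p)$, by eliminating the vertex coordinates and estimating the resulting character sums with Theorem~\ref{thm:Weil}; this is exactly the mechanism already carried out by hand for $k=2,3,4$ in Propositions~\ref{prop:linear-com2},~\ref{prop:linear-com3} and~\ref{prop:linear-com4}, where one finds either $O(1)$ or $O(p)$ admissible pairs. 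Summed over any fixed range $k\le K_0$ this contributes only $O_{K_0}(p)=o(p^2)$.

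The \textbf{main obstacle} is the contribution of components of intermediate and large size, $K_0<k\le p/2$, where the hand computations and a naive algebraic count both break down. A direct B\'ezout-type bound on the variety parametrising $(\lambda,a)$ together with a closed symmetric orbit of size $k$ gives roughly $2^{k}$ for the number of solutions, which is far too weak once $k$ grows with $p$. I would instead try to model the multivalued dynamics $s\mapsto\{\sigma(s),\tau(s)\}$ as a random $2$-in/$2$-out digraph and import the connectivity threshold of \cite[Theorem 5.1]{FF}; the rigorous input this requires is an equidistribution statement for the joint behaviour of the two successor maps over the family, uniform in the size of the putative invariant set. I expect this equidistribution to be the genuinely hard step: both maps $\sigma,\tau$ factor through the squaring map $s\mapsto s^2$, so the dynamics carries a rigid $\pm$-symmetry and the family is far from a truly random digraph, and the Weil bound of Theorem~\ref{thm:Weil} controls only single character sums rather than the iterated system, so one likely needs mixing or sum-product type estimates for the composed maps. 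A successful resolution would then combine the $o(p^2)$ bound from bounded $k$ with such a probabilistic estimate for the remaining range to conclude that $c(\lambda,a)=1$ for all but a vanishing proportion of pairs.
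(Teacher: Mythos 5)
First, be aware that the statement you were asked to prove is not a theorem in the paper at all: it is Conjecture~\ref{conj:linear-conn0}, which the authors support only with the computations in Table~\ref{tab:linear-conn1} (all primes $p\le 3041$), the small-component classifications of Propositions~\ref{prop:linear-com2}, \ref{prop:linear-com3} and \ref{prop:linear-com4}, and the heuristic analogy with random $2$-regular digraphs via \cite[Theorem 5.1]{FF} and \cite[Theorem 1]{CF}. There is no proof in the paper to compare against, so the relevant question is whether your proposal closes the gap, and it does not — as you yourself acknowledge. Your reduction to the moment bound $\sum_{\lambda}\sum_{a}(c(\lambda,a)-1)=o(p^2)$ is sound, and your treatment of bounded component sizes $k\le K_0$ is a legitimate generalization of the paper's hand computations for $k=2,3,4$: for each fixed $k$ the incidence variety in $(\lambda,a)$ and the vertex coordinates has bounded complexity, and elimination plus Theorem~\ref{thm:Weil} plausibly yields $O_k(p)$ admissible pairs (though even this step is only sketched; the elimination for general $k$ would need the polynomial system to be shown nondegenerate, which is not automatic).

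The genuine gap is exactly where you flag it: the range $K_0<k\le p/2$. Nothing in your proposal, and nothing in the paper or its cited literature, controls components of size growing with $p$. The Fenner--Frieze and Cooper--Frieze results apply to uniformly random $2$-in/$2$-out digraphs, whereas the family here is algebraically rigid — both successor maps $s\mapsto s^2-a$ and $s\mapsto\lambda s^2-a$ factor through squaring, the graph carries the exact involution $S=-S$ you identified, and the two maps differ only by the scalar $\lambda$ — so no transfer principle from the random model is available. The ``equidistribution statement uniform in the size of the invariant set'' that you would need is not a known consequence of Weil-type bounds (Theorem~\ref{thm:Weil} handles single complete character sums, not iterated or set-restricted ones), and your own estimate that a B\'ezout count degrades like $2^k$ shows the algebraic route collapses precisely when $k$ is unbounded. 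In short: your proposal is an honest and well-structured reduction that reproduces the paper's evidence base (small components plus the random-digraph heuristic) in rigorous language for bounded $k$, but the conjecture remains open, and your attempt does not constitute a proof of it.
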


We in fact make computations for all primes $p \le 3041$ and find that $U_1(p) = L(p)$ for any $31< p \le 3041$,
and for any $5 \le p \le 3041$ each unconnected graph $\cG(\lambda,X+a)$ with $a \ne 0$ over $\F_p$
has exactly two connected components.
Hence, we make the following conjectures.

\begin{conjecture}  \label{conj:linear-conn1}
For any prime $p>31$, $U_1(p) = L(p)$.
\end{conjecture}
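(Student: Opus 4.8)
The plan is to prove the matching upper bound $U_1(p) \le L(p)$, since the lower bound $U_1(p) \ge L(p)$ is already in hand from Propositions~\ref{prop:linear-com2} and \ref{prop:linear-com3}. Equivalently, I would show that for $p > 31$ every unconnected graph $\cG(\lambda, X+a)$ with $\lambda \in \N_p$ and $a \in \F_p^*$ is exactly one of those described there; that is, the families of Propositions~\ref{prop:linear-com2} and \ref{prop:linear-com3} exhaust all sources of disconnection.

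First I would pin down the structure of a proper connected component. By the degree count in Proposition~\ref{prop:perm-inout}, the difference (out-degree minus in-degree) is $+1$ at the vertex $0$ and $-1$ at the vertex $f^{-1}(0) = -a$, and $0$ at every other vertex; since each component must be balanced, $0$ and $-a$ lie in a common component $C_0$. Any other component $C$ therefore contains neither $0$ nor $-a$, is $\pm$-symmetric (an edge $x \to y$ forces an edge $x \to -y$, so $-y \in C$ whenever $y \in C$), and is a strongly connected $2$-regular digraph by Proposition~\ref{prop:perm-conn}; in particular $|C|$ is even. The same symmetry applied to $C_0$ shows $|C_0|$ is odd. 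Thus the task splits into (I) excluding a non-principal component of even order $\ge 4$, and (II) excluding a principal component $C_0$ of odd order $\ge 5$. Proposition~\ref{prop:linear-com4} already settles order $4$, and any order-$2$ component is pinned down uniquely by the coordinate $x = 2/(1-\lambda)$, so at most one such component can occur per graph; granting (I) and (II) then forces the unconnected graphs to be precisely the $L(p)$ listed ones.

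For a fixed combinatorial type of a component of order $k$, I would encode it as a system of polynomial equations: labelling the vertices $y_1, \dots, y_{\lceil k/2 \rceil}$ and attaching to each edge a branch marker $\varepsilon \in \{1, \lambda\}$, each edge contributes one relation $\varepsilon\, y_j^2 = y_i + a$. This yields a system in the unknowns $(y_i, a)$ with $\lambda$ a parameter, and the number of $(\lambda, a)$ admitting such a component is controlled by the $\F_p$-points of the associated variety. For $k > 4$ the system is overdetermined, so the variety is expected to be $0$-dimensional; a Bézout/resultant bound then gives a count independent of $p$, and a Weil-type estimate in the spirit of Theorem~\ref{thm:Weil}, combined with the degeneracy already observed in Proposition~\ref{prop:linear-com4} (where every solution forces $a = 0$, hence is excluded), should rule out the sporadic solutions for $p > 31$. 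The principal small-$C_0$ case (II) is handled the same way after adjoining the vertex $0$ and the relations at $\pm a$.

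The hard part will be the uniformity in $k$: the order of a component is not bounded a priori, and the number of combinatorial types grows super-exponentially in $k$, so a type-by-type analysis cannot terminate. To convert ``overdetermined, generically empty'' into ``empty for every $p > 31$'' one needs a single uniform input --- either an absolute upper bound on the order of any proper component (reducing the problem to finitely many $k$, after which a finite check against the threshold $31$ would finish), or a global counting identity summing over all components at once. Producing such a bound, and in particular proving that the sporadic extension-field solutions never reduce to genuine $\F_p$-configurations with $a \ne 0$ once $p > 31$, is exactly the obstacle that keeps the statement at the level of a conjecture.
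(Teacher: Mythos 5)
There is a genuine gap, and in fact the target here is not a theorem of the paper at all: Conjecture~\ref{conj:linear-conn1} is stated as a conjecture, supported only by the computations reported in Section~\ref{sec:linear-com} (the authors verified $U_1(p) = L(p)$ for all primes $31 < p \le 3041$), so there is no proof in the paper against which your argument could be matched. Your structural reductions are sound and worth recording: the degree imbalance from Proposition~\ref{prop:perm-inout} does force $0$ and $-a$ into one component; the edge symmetry $x \to y \Rightarrow x \to -y$ (together with every vertex having positive in-degree) does make every component $\pm$-symmetric, so non-principal components have even order and the principal one odd order; and Propositions~\ref{prop:linear-com2}, \ref{prop:linear-com3} and \ref{prop:linear-com4} do dispose of orders $2$, $3$, $4$, with the order-$2$ component pinned uniquely so no double counting occurs. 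All of that correctly reduces the conjecture to excluding proper components of order $5$ or more. But that reduction is where the proposal stops being a proof, as you yourself concede in the final paragraph.

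The decisive step is missing and your suggested machinery cannot supply it. The order $k$ of a hypothetical proper component ranges up to roughly $p$, so the problem is not reducible to finitely many polynomial systems independent of $p$: the number of combinatorial types (cyclic arrangements with branch markers $\varepsilon \in \{1,\lambda\}$) grows super-exponentially in $k$, and for each fixed type a B\'ezout/resultant bound on the $0$-dimensional locus, or a Weil-type estimate in the spirit of Theorem~\ref{thm:Weil}, carries constants that grow at least exponentially in $k$ (degrees double along the cycle, exactly as in the iterates $h$, $u$ in the proof of Theorem~\ref{thm:linear-P}). An estimate of shape $c_k\sqrt{p}$ is vacuous once $c_k$ exceeds $\sqrt{p}$, which happens already for $k$ of order $\log p$; and for $k$ comparable to $p$ there is no overdeterminacy argument available at all, since two components of orders $k$ and $p-k$, both large, are a perfectly consistent a priori configuration that your system-counting framework does not touch. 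Absent either an absolute bound on the order of a proper component or a global identity summing over all components simultaneously, the argument cannot close, which is precisely why the statement remains a conjecture; your write-up is an honest and correctly organized research plan, not a proof.
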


\begin{conjecture}  \label{conj:linear-conn2}
For any prime $p \ge 5$, each unconnected graph $\cG(\lambda,X+a)$ with $a \ne 0$ over $\F_p$
has exactly two connected components.
\end{conjecture}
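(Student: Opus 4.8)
The plan is to split the statement into an elementary \emph{parity} part, which pins down one of the two components unconditionally, and a much harder \emph{uniqueness} part, which is the real content and the reason this is offered only as a conjecture. First I would exploit the degree data of Proposition~\ref{prop:perm-inout}. Since $a\ne 0$, the only unbalanced vertices of $\cG(\lambda,X+a)$ are $0$, with $\mathrm{outdeg}-\mathrm{indeg}=+1$, and $-a=f^{-1}(0)$, with $\mathrm{outdeg}-\mathrm{indeg}=-1$; every other vertex has $\mathrm{indeg}=\mathrm{outdeg}=2$. For any connected component $C$ no edge crosses its boundary, so $\sum_{v\in C}(\mathrm{outdeg}(v)-\mathrm{indeg}(v))=0$, which forces $[0\in C]=[-a\in C]$; that is, each component contains both $0$ and $-a$ or neither. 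Hence there is a distinguished component $C_0\supseteq\{0,-a\}$, strongly connected by Proposition~\ref{prop:perm-conn}, while every \emph{other} component consists solely of balanced vertices and is therefore a strongly connected $2$-regular digraph. Moreover, since $y$ and $-y$ always share the two in-neighbours $y^2-a$ and $\lambda y^2-a$, each component is closed under $y\mapsto -y$. This matches the two known unconnected families: in Proposition~\ref{prop:linear-com2} the block $\{2/(1-\lambda),2/(\lambda-1)\}$ is such an extra $2$-regular component (neither vertex equals $0$ or $-a$), while in Proposition~\ref{prop:linear-com3} the block $\{0,a,-a\}$ is exactly $C_0$.

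With this reduction the conjecture becomes: \emph{the complement $\F_p\setminus C_0$, when nonempty, carries at most one component.} Equivalently, passing to the reverse graph studied in Section~\ref{sec:linear} via $g=f^{-1}=X-a$, where each $x$ sends edges to $x^2-a$ and $\lambda x^2-a$, one must rule out two disjoint negation-closed, backward-closed vertex sets both avoiding $\{0,-a\}$. The natural tool is to control the invariant sets of the overlaid quadratic dynamics $x\mapsto x^2-a$ and $x\mapsto\lambda x^2-a$ by character sums: using the quadratic character $\chi$ and the Weil-type bounds of Theorem~\ref{thm:Weil}, I would estimate how the size of any candidate extra block is constrained by the distribution of squares among the values $\{x^2-a,\lambda x^2-a\}$, aiming to show either that a second small block cannot coexist with $C_0$, or that the bulk of the balanced vertices cannot split into two pieces.

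I expect the uniqueness step to be the decisive obstacle. The parity argument is rigid and yields $C_0$ for free, but it says nothing about how the remaining balanced vertices distribute: a priori the maps $x\mapsto x^2-a$ and $x\mapsto\lambda x^2-a$ could have many short cycles and hence many small invariant blocks, and the Weil bound controls only averages with an error of size $O(\sqrt p)$, which is far too weak to exclude, say, three blocks of bounded size. I would therefore treat the large-$p$ regime by an equidistribution/``giant component'' argument forcing almost all vertices into a single block, and combine it with a finite-field analysis of the possible small blocks (in the spirit of Propositions~\ref{prop:linear-com2}--\ref{prop:linear-com3}) to show that no second genuinely small block survives alongside $C_0$; the finitely many small primes ($p\le 31$, where exceptional behaviour already occurs) would be checked directly, as in the computations reported above. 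Bridging the gap between ``almost all connected'' and the exact count ``exactly two components when disconnected'' is precisely where the available analytic techniques fall short, which is why the statement stands as a conjecture rather than a theorem.
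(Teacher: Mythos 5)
You should first be clear about the status of the target statement: it is labelled a \emph{conjecture} in the paper, and the paper offers no proof at all --- only computational evidence (all primes $p\le 3041$, reported in Section~\ref{sec:linear-com}) together with the classification of small components in Propositions~\ref{prop:linear-com2}--\ref{prop:linear-com4}. So there is no proof of the paper's to compare yours against, and your proposal, by your own admission, does not supply one either.

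That said, the part of your argument that is actually carried out is correct and goes slightly beyond what the paper makes explicit. The degree bookkeeping from Proposition~\ref{prop:perm-inout} (for $a\ne 0$, only $0$ and $-a=f^{-1}(0)$ are unbalanced, with excesses $+1$ and $-1$), combined with the fact that no edge leaves a weakly connected component, does force $0$ and $-a$ to lie in the same component $C_0$, and it does force every other component to be a strongly connected $2$-regular digraph closed under $y\mapsto -y$ (since $y$ and $-y$ share the in-neighbours $f^{-1}(y^2)$ and $f^{-1}(\lambda y^2)$). This is consistent with the paper's data: the two-vertex components of Proposition~\ref{prop:linear-com2} avoid $\{0,-a\}$, while the three-vertex component $\{0,a,-a\}$ of Proposition~\ref{prop:linear-com3} is $C_0$ itself. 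This is a genuine, if modest, structural reduction: the conjecture becomes the assertion that at most one $2$-regular block can exist outside $C_0$.

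The gap is exactly where you locate it, and it is fatal to the proposal as a proof: nothing in your sketch rules out two or more negation-closed invariant blocks disjoint from $C_0$. The Weil-type estimates of Theorem~\ref{thm:Weil} control complete character sums over $\F_p$ with square-root error, which is adequate for average statements (as in Proposition~\ref{prop:linear-fix}) but structurally incapable of excluding several bounded-size invariant sets of the pair of maps $x\mapsto x^2-a$, $x\mapsto\lambda x^2-a$; indeed, an exhaustive case analysis in the style of Propositions~\ref{prop:linear-com2}--\ref{prop:linear-com4} can only handle components of each fixed size one at a time, and no ``giant component'' theorem is known for these deterministic graphs (the heuristic appeal to random $2$-regular digraphs via \cite{FF,CF} in the introduction is exactly that --- a heuristic). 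So your proposal should be read as a correct partial reduction plus an honest research programme, not as a proof; the statement remains, as in the paper, a conjecture supported by computation.
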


Conjecture~\ref{conj:linear-conn1} suggests that when $p>31$,  if  a graph $\cG(\lambda,X+a)$ over $\F_p$ with $a \ne 0$
does not belong to the cases described in Propositions~\ref{prop:linear-com2} and \ref{prop:linear-com3},
then  it is a connected graph.

\subsection{Hamiltonian cycles}

For each graph $\cG(\lambda,X+a)$, Theorem~\ref{thm:perm-Ha} has confirmed that all its connected components have a Hamiltonian cycle.
Here, we only make computations on Hamiltonian cycles of $\cG(\lambda,X+a)$ over $\F_p$ when it is a connected graph.
Due to the complexity, we only can test small primes $p$.

Let $H_{11}(p)$ (respectively, $H_{12}(p)$) be the minimal (respectively, maximal) number of Hamiltonian cycles in a connected graph of the form $\cG(\lambda,X+a)$
($\lambda$ runs over $\N_p$ and $a$ runs over $\F_p^*$).
Then, let $H_1(p)$ be the average number of Hamiltonian cycles in these connected graphs.

By Corollary~\ref{cor:linear-HT1}, a connected graph $\cG(\lambda,X+a)$ over $\F_p$ has no Hamiltonian cycle of Type $1$ when $p> 17$.
Let $R_{12}(p)$ (respectively, $R_{13}(p)$) be the ratio of such connected graphs having Hamiltonian cycles of Type $2$ (respectively, Type $3$)
over all the connected graphs (when $\lambda$ runs over $\N_p$ and $a$ runs over $\F_p^*$).
Let $A_{12}(p)$ (respectively, $A_{13}(p)$) be the average number of  Hamiltonian cycles of Type $2$ (respectively, Type $3$)
over all such connected graphs having Hamiltonian cycles of Type $2$ (respectively, Type $3$).

From Table~\ref{tab:linear-hamilton}, we can see that there are many Hamiltonian cycles in a connected graph $\cG(\lambda,X+a)$ over $\F_p$,
whose amount grows rapidly with respect to $p$.

\begin{table}[!htbp]
\centering
\begin{tabular}{|c|c|c|c|c|}
\hline
$p$ & $H_{11}(p)$ & $H_{12}(p)$ & $H_1(p)$ \\ \hline

17 & 1 & 72  &  18.31034   \\ \hline

19 & 4 & 148 &   34.03529  \\ \hline

23 & 5  &  423 &  93.70078  \\ \hline

29 & 34 & 2840   &  666.13829  \\ \hline

31 & 30 & 5410  &  1206.08620  \\ \hline

37  & 448 & 45546  &  7906.61783  \\ \hline

41 & 1223 & 175428  &  28473.26666  \\ \hline

43 & 2222 & 255558  & 53999.07760  \\ \hline

47  & 6576 & 1273729  &  195723.05914  \\ \hline

53 & 63363  & 6795031  &  1297781.68277  \\ \hline
\end{tabular}
\bigskip
\caption{Counting Hamiltonian cycles in linear case}
\label{tab:linear-hamilton}
\end{table}

Tables~\ref{tab:linear-HT2} and \ref{tab:linear-HT3} suggest that although many connected graphs have Hamiltonian cycles of Types 2 and 3,
 these two types of Hamiltonian cycles occupy a small proportion when $p$ is large.
 This means that each connected graph $\cG(\lambda,X+a)$ over $\F_p$ is likely to have many types of Hamiltonian cycles.

\begin{table}[!htbp]
\centering
\begin{tabular}{|c|c|c|c|}
\hline
$p$  & $A_{12}(p)$ & $A_{12}(p)/H_1(p)$ & $R_{12}(p)$ \\ \hline

17   &  2.40909 & 0.13156 & 0.75862  \\ \hline

19  & 3.30158 & 0.09700  & 0.74117 \\ \hline

23  & 3.79452  & 0.04049 & 0.57480 \\ \hline

29  & 8.30434  & 0.01246 & 0.61170 \\ \hline

31  & 8.25954  & 0.00684 & 0.56465 \\ \hline

37  & 17.72580  & 0.00224 & 0.59235 \\ \hline

41  & 39.12643  & 0.00137 & 0.44615 \\ \hline

43  & 32.31088  & 0.00059 & 0.42793 \\ \hline

47  &  60.58173 & 0.00030 & 0.38447 \\ \hline

53   & 107.67010  & 0.00008  & 0.43957\\ \hline
\end{tabular}
\bigskip
\caption{Hamiltonian cycles of Type 2 in linear case}
\label{tab:linear-HT2}
\end{table}

\begin{table}[!htbp]
\centering
\begin{tabular}{|c|c|c|c|c|c|c|}
\hline
$p$  & $A_{13}(p)$ & $A_{13}(p)/H_1(p)$ & $R_{13}(p)$ \\ \hline

17   & 7.42857 & 0.40570 &  0.96551 \\ \hline

19   & 11.38095 & 0.33438 & 0.98823  \\ \hline

23  & 25.72580 & 0.27455 & 0.97637 \\ \hline

29   & 125.45161 & 0.18832 & 0.98936 \\ \hline

31    & 190.37117 & 0.15784 & 0.98706 \\ \hline

37  & 754.65594 & 0.09544 & 0.99044 \\ \hline

41  & 2036.97927  & 0.07154 & 0.98974 \\ \hline

43  & 3296.68456 & 0.06105 & 0.99113 \\ \hline

47 & 7935.52918 & 0.04054 & 0.95009 \\ \hline

53  & 35505.00762  & 0.02735 & 0.99093  \\ \hline
\end{tabular}
\bigskip
\caption{Hamiltonian cycles of Type 3 in linear case}
\label{tab:linear-HT3}
\end{table}

\section{Quadratic case}
\label{sec:quad}

For the quadratic case we only establish some general properties without making computations.
These suggest that this case might be not attractive.
The main reason is that quadratic polynomials are not permutation polynomials.

Recall that $q$ is odd, and $\lambda$ is a non-square element in $\F_q$.

\begin{proposition}
\label{prop:quad-iso1}
For any $a\ne 0,b\in \F_q$,  the graph $\cG(\lambda,X^2+aX+b)$ is isomorphic to the graph $\cG(\lambda,X^2+X+a^{-2}b)$.
\end{proposition}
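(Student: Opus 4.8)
The plan is to mimic the proof of Proposition~\ref{prop:linear1}, producing an explicit graph isomorphism induced by a scaling of the vertex set. First I would set $f(X) = X^2 + aX + b$ and $g(X) = X^2 + X + a^{-2}b$, and guess that the bijection $\psi \colon \F_q \to \F_q$ given by $\psi(x) = a^{-1}x$ carries $\cG(\lambda, f)$ onto $\cG(\lambda, g)$. Since $a \ne 0$, this $\psi$ is clearly a bijection of the vertex set, so the whole content is to check that it sends edges to edges in both directions and leaves the non-square parameter $\lambda$ untouched.

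Next I would verify the edge condition directly. Recall that there is an edge from $x$ to $y$ in $\cG(\lambda, f)$ precisely when $y^2 = f(x)$ or $\lambda y^2 = f(x)$. Writing $c = a^{-1}$, I would multiply the relation $y^2 = x^2 + ax + b$ through by $c^2 = a^{-2}$ and observe that, because $ca = 1$ and $c^2 b = a^{-2}b$, the result is exactly $(cy)^2 = (cx)^2 + (cx) + a^{-2}b = g(cx) = g(\psi(x))$. The identical scaling turns $\lambda y^2 = f(x)$ into $\lambda (\psi(y))^2 = g(\psi(x))$. Thus an edge of either type from $x$ to $y$ becomes an edge of the same type from $\psi(x)$ to $\psi(y)$, and since every step is an equivalence the argument is reversible, so $\psi$ is a graph isomorphism.

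There is essentially no obstacle here; the only nontrivial point is discovering the correct scalar $c$. It is forced by requiring $c^2 a = c$ (to normalise the linear coefficient to $1$) together with $c^2 b = a^{-2}b$ (to match the prescribed constant term), and both are satisfied by $c = a^{-1}$. It is a pleasant feature that the normalisation fixing the linear term automatically yields the desired constant term, so no further adjustment is needed. I would then conclude, exactly as in Proposition~\ref{prop:linear1}, that the remaining verification \emph{can be done by direct computation}.
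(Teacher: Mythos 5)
Your proposal is correct and matches the paper's proof exactly: the paper likewise takes the bijection $\psi(x)=a^{-1}x$ and leaves the edge verification to direct computation, which you have simply carried out in full. The explicit check that multiplying $y^2=f(x)$ (or $\lambda y^2=f(x)$) by $a^{-2}$ yields $\psi(y)^2=g(\psi(x))$ (or $\lambda\psi(y)^2=g(\psi(x))$) is precisely the omitted computation.
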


\begin{proof}
Note that the isomorphism is induced by the bijection map $\psi$ from $\F_q$ to itself defined by $\psi(x)=a^{-1} x$.
\end{proof}

\begin{proposition}
\label{prop:quad-iso2}
For any $a \ne 0, b \ne 0$,  if $b/a$ is a square element in $\F_q$, then the graph $\cG(\lambda,X^2+a)$ is isomorphic to the graph $\cG(\lambda,X^2+b)$.
\end{proposition}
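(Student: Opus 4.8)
The plan is to exhibit an explicit scaling map between the two vertex sets and to check that it carries edges to edges, exactly in the spirit of Propositions~\ref{prop:linear1}, \ref{prop:linear2} and \ref{prop:quad-iso1}. First I would use the hypothesis that $b/a$ is a square to choose $c \in \F_q^*$ with $c^2 = b/a$, so that $b = ac^2$; such a $c$ exists precisely because $b/a$ lies in the set of squares of $\F_q$. I would then define $\psi \colon \F_q \to \F_q$ by $\psi(x) = cx$, which is a bijection of the vertex set since $c \ne 0$, and take $\psi$ as the candidate isomorphism from $\cG(\lambda,X^2+a)$ to $\cG(\lambda,X^2+b)$.

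It then suffices to verify that there is an edge from $x$ to $y$ in $\cG(\lambda,X^2+a)$ if and only if there is an edge from $\psi(x)$ to $\psi(y)$ in $\cG(\lambda,X^2+b)$. By the defining relation of the graph, the latter edge exists exactly when $(cy)^2 = (cx)^2 + b$ or $\lambda (cy)^2 = (cx)^2 + b$. Substituting $b = ac^2$ and dividing through by $c^2 \ne 0$ turns these two conditions into $y^2 = x^2 + a$ and $\lambda y^2 = x^2 + a$ respectively, which is precisely the edge condition for $\cG(\lambda,X^2+a)$. Hence $\psi$ preserves adjacency in both directions and is the required isomorphism.

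There is no substantial obstacle here; the whole argument is a single change of variables once the square root $c$ of $b/a$ is in hand. The one point that deserves a word of care is that the scaling must respect the two subequations $Y^2 = f(X)$ and $\lambda Y^2 = f(X)$ \emph{individually}, not merely their product: this works because multiplying both coordinates by the common scalar $c$ leaves the factor $\lambda$ untouched, so after cancelling $c^2$ the square branch is matched to the square branch and the $\lambda$-branch to the $\lambda$-branch. Equivalently, dividing by the square $c^2$ does not alter quadratic residuosity, so the two branches are not interchanged.
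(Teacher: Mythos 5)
Your proposal is correct and matches the paper's proof, which likewise writes $b/a = c^2$ and takes the scaling bijection $\psi(x) = cx$ as the isomorphism; you have simply written out the edge-preservation check that the paper leaves implicit. Your closing remark that dividing by the square $c^2$ keeps the two subequations $Y^2 = f(X)$ and $\lambda Y^2 = f(X)$ matched branch-to-branch is a correct and worthwhile observation, though the statement only requires preservation of the combined edge relation.
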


\begin{proof}
Write $b/a = c^2, c \in \F_q$.
Note that the isomorphism is induced by the bijection map $\psi$ from $\F_q$ to itself defined by $\psi(x)=cx$.
\end{proof}

From Proposition~\ref{prop:quad-iso2}, we know that for a fixed $\lambda$,
there are at most two graphs up to isomorphism among the graphs $\cG(\lambda,X^2+a)$ when $a$ runs over $\F_q^*$.

We remark that in the graph $\cG(\lambda,X^2+X+a)$, if $-a + 1/4$ is not a square element, then the in-degree of the vertex $0$ is zero.
Similarly, in the graph $\cG(\lambda,X^2+a)$, if $-a$ is not a square element in  $\F_q$, then the in-degree of the vertex $0$ is zero.
In fact, there could be many vertices with zero in-degree.

\begin{proposition}
\label{prop:quad-indeg1}
For each graph $\cG(\lambda, X^2+X+a)$ over $\F_q$ with $a \ne 1/4$,
there are at least $\lfloor \frac{1}{4} \big( q - 3\sqrt{q} \big) -1 \rfloor$ vertices having zero in-degree.
\end{proposition}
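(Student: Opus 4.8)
The plan is to translate the zero-in-degree condition into a statement about quadratic characters and then count via character sums. In $\cG(\lambda, X^2+X+a)$ the in-degree of a vertex $y$ equals the number of $x \in \F_q$ with $x^2+x+a=y^2$ plus the number with $x^2+x+a=\lambda y^2$. Since the equation $x^2+x+(a-c)=0$ is solvable in $\F_q$ precisely when its discriminant $1-4a+4c$ is a square (a nonzero square giving two roots, and $0$ giving a repeated root), the vertex $y$ has zero in-degree if and only if both $1-4a+4y^2$ and $1-4a+4\lambda y^2$ are non-squares. Writing $b=1-4a$, which is nonzero because $a \ne 1/4$, and letting $\chi$ denote the quadratic character with $\chi(0)=0$, the quantity to bound is
$$
N = \#\{ y \in \F_q : \chi(b+4y^2) = -1 \text{ and } \chi(b+4\lambda y^2) = -1 \}.
$$

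First I would introduce the character-sum indicator $I(y) = \frac14\bigl(1-\chi(b+4y^2)\bigr)\bigl(1-\chi(b+4\lambda y^2)\bigr)$, which equals $1$ exactly when both arguments are nonzero non-squares and is $\le 1$ in general. Expanding $\sum_{y}I(y)$ produces the constant term $q/4$, two sums of quadratic character values, and one sum over the quartic $(b+4y^2)(b+4\lambda y^2)$. Applying the second part of Theorem~\ref{thm:Weil} to each quadratic sum gives $\sum_y\chi(b+4y^2)=-\chi(4)=-1$ and $\sum_y\chi(b+4\lambda y^2)=-\chi(4\lambda)=1$ (using $\chi(4)=1$ and $\chi(\lambda)=-1$), so these two contributions cancel. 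For the quartic sum I would check that $(b+4y^2)(b+4\lambda y^2)$ has four distinct roots in its splitting field — the pairs $y^2=-b/4$ and $y^2=-b/(4\lambda)$ are distinct because $b\ne0$ and $\lambda\ne1$ — so it is squarefree and not a constant multiple of a square; the first part of Theorem~\ref{thm:Weil} with $d=4$ then bounds this sum by $3\sqrt q$ in absolute value. Collecting terms yields $\sum_y I(y) \ge \frac14(q-3\sqrt q)$.

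The remaining and most delicate step is to pass from $\sum_y I(y)$ to the true count $N$. These differ only at the ``bad'' vertices $y$ where one of the two arguments vanishes, since there a repeated root exists and the in-degree is in fact positive, so the genuine indicator is $0$ while $I(y)$ may be positive. The key observation is that $-b/4$ and $-b/(4\lambda)$ cannot both be squares: their ratio is $\lambda$, which is a non-square, so exactly one of them is a nonzero square, producing exactly one pair $\pm y_0$ of bad vertices. At each such vertex the nonvanishing argument is nonzero (equal to $b(1-\lambda)$ or $b(\lambda-1)/\lambda$), so $I(y) \le 1/2$ there, whence $\sum_y I(y) - N \le 1$.

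Finally I would combine these estimates to get $N \ge \frac14(q-3\sqrt q) - 1$, and since $N$ is an integer this gives $N \ge \lfloor \frac14(q-3\sqrt q)-1\rfloor$, as claimed. The main obstacle is precisely the bookkeeping at the degenerate points: one must distinguish a vanishing discriminant (which forces a solution and hence disqualifies $y$) from a vanishing character value, and track the constant carefully enough — using the non-square ratio argument to rule out four bad points — to reach $-1$ rather than a larger deficit.
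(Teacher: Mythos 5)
Your proposal is correct and follows essentially the same route as the paper's proof: the character-sum indicator $\frac{1}{4}\bigl(1-\chi(\cdot)\bigr)\bigl(1-\chi(\cdot)\bigr)$, cancellation of the two quadratic sums via the second part of Theorem~\ref{thm:Weil}, the Weil bound $3\sqrt{q}$ for the squarefree quartic, and a deficit of at most $1$ at the degenerate vertices. Your bookkeeping at the bad points (the non-square ratio $\lambda$ forcing exactly one pair $\pm y_0$, each contributing at most $1/2$) is in fact a more explicit justification of the single ``$-1$'' term that the paper dispatches in one parenthetical remark.
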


\begin{proof}
Denote by $Z(\lambda, a)$ the number of vertices having zero in-degree in the graph $\cG(\lambda, X^2+X+a)$.
Let $\chi$ be the multiplicative quadratic character of $\F_q$. By convention, $\chi(0)=0$.
If a vertex $y \in \F_q$ has zero in-degree, then both $y^2 -a +1/4$ and $\lambda y^2 -a +1/4$ are non-square elements in $\F_q$,
that is,
$$
\chi(y^2 -a +1/4) =  \chi(\lambda y^2 -a +1/4) = -1.
$$
So,  we have
$$
Z(\lambda, a) \ge \sum_{y \in \F_q} \frac{1-\chi(y^2 -a +1/4)}{2} \cdot \frac{1-\chi(\lambda y^2 -a +1/4)}{2} - 1,
$$
where the term ``$- 1$" comes from one of the two cases when either $y^2 -a +1/4=0$ or $\lambda y^2 -a +1/4=0$
(since $a \ne 1/4$ and $\lambda$ is a non-square element in $\F_q$, these two cases can not both happen).

Then, using Theorem~\ref{thm:Weil} and noticing $a \ne 1/4$, we deduce that
\begin{align*}
Z(\lambda, a) & \ge \sum_{y \in \F_q} \frac{1-\chi(y^2 -a +1/4)}{2} \cdot \frac{1-\chi(\lambda y^2 -a +1/4)}{2} -1 \\
& = \frac{1}{4} \Big( q - \sum_{y \in \F_q} \chi(y^2 -a +1/4) -  \sum_{y \in \F_q} \chi(\lambda y^2 -a +1/4) \\
& \qquad\qquad + \sum_{y \in \F_q} \chi((y^2 -a +1/4)(\lambda y^2 -a +1/4))\Big) - 1 \\
& \ge \frac{1}{4} \Big( q + \chi(1) + \chi(\lambda) - 3\sqrt{q} \Big) -1 \\
& =   \frac{1}{4} \big( q - 3\sqrt{q} \big) -1.
\end{align*}
This in fact completes the proof.
\end{proof}

Similarly, we obtain:

\begin{proposition}
\label{prop:quad-indeg2}
For each graph $\cG(\lambda, X^2+a)$ over $\F_q$ with $a \ne 0$,
there are at least $\lfloor \frac{1}{4} \big( q - 3\sqrt{q} \big) -1 \rfloor$ vertices having zero in-degree.
\end{proposition}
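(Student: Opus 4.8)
The plan is to mirror almost verbatim the proof of Proposition~\ref{prop:quad-indeg1}, replacing the trinomial $X^2+X+a$ by $X^2+a$ and tracking how the character-sum evaluations change. First I would translate the condition of zero in-degree into a statement about quadratic characters. By the definition of $\cG(\lambda,X^2+a)$, a vertex $y$ receives an edge from some $x$ precisely when $x^2+a=y^2$ or $x^2+a=\lambda y^2$; since $x^2$ ranges over the squares of $\F_q$, such an $x$ exists if and only if $y^2-a$ is a square (respectively $\lambda y^2-a$ is a square). Hence, writing $\chi$ for the quadratic character with $\chi(0)=0$, the vertex $y$ has zero in-degree exactly when $\chi(y^2-a)=\chi(\lambda y^2-a)=-1$, a condition detected by the product $\frac14\big(1-\chi(y^2-a)\big)\big(1-\chi(\lambda y^2-a)\big)$.

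Next I would sum this indicator over $y\in\F_q$ to obtain $Z(\lambda,a)\ge \sum_{y\in\F_q}\frac14\big(1-\chi(y^2-a)\big)\big(1-\chi(\lambda y^2-a)\big)-1$, exactly as in Proposition~\ref{prop:quad-indeg1}. The correction term $-1$ accounts for the degenerate vertices at which one of the arguments vanishes, so that $\chi=0$ and the product no longer equals the true indicator; here I would note that $y^2-a=0$ and $\lambda y^2-a=0$ cannot hold simultaneously, since together they force $(\lambda-1)y^2=0$ and hence $a=0$, contradicting $a\ne0$ (using also that $\lambda\ne1$ as $\lambda$ is non-square).

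The computational heart is then to expand the product and evaluate the three resulting sums $\sum_y\chi(y^2-a)$, $\sum_y\chi(\lambda y^2-a)$ and $\sum_y\chi\big((y^2-a)(\lambda y^2-a)\big)$. The first two are character sums of nondegenerate quadratics, so by the explicit formula in Theorem~\ref{thm:Weil} they equal $-\chi(1)=-1$ and $-\chi(\lambda)=1$ respectively (their discriminants $4a$ and $4\lambda a$ being nonzero as $a\ne0$); these contributions cancel as $\chi(1)+\chi(\lambda)=0$, just as in the quadratic-trinomial case.

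The step I expect to require the most care is the degree-four sum: I would apply the Weil bound from Theorem~\ref{thm:Weil} to $F(Y)=(Y^2-a)(\lambda Y^2-a)$, which demands checking that $F$ is not a constant multiple of a square and determining its number $d$ of distinct roots. Since $a\ne0$ and $\lambda\ne1$, the four roots $\pm\sqrt{a},\pm\sqrt{a/\lambda}$ are pairwise distinct, so $F$ is separable with $d=4$ and is certainly not a square; thus $\big|\sum_y\chi(F(y))\big|\le 3q^{1/2}$. Substituting these evaluations yields $Z(\lambda,a)\ge \frac14\big(q-3\sqrt q\big)-1$, and the floor in the statement then follows because $Z(\lambda,a)$ is an integer.
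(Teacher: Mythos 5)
Your proposal is correct and takes essentially the same approach as the paper: the paper proves Proposition~\ref{prop:quad-indeg1} by exactly this indicator-plus-Weil argument and then derives Proposition~\ref{prop:quad-indeg2} with only the remark ``Similarly, we obtain,'' i.e., via the very substitution $X^2+X+a \mapsto X^2+a$ you carry out. Your checks of the details the paper leaves implicit --- the nonzero discriminants $4a$ and $4\lambda a$ giving $\chi(1)+\chi(\lambda)=0$, the four distinct roots $\pm\sqrt{a},\pm\sqrt{a/\lambda}$ yielding $d=4$ in the Weil bound, and the single ``$-1$'' correction since $y^2-a$ and $\lambda y^2-a$ cannot vanish simultaneously when $a\ne 0$ --- are all accurate.
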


When $q \ge 23$, we have $ \frac{1}{4} \big( q - 3\sqrt{q} \big) -1  \ge 1$.
 Propositions~\ref{prop:quad-indeg1} and \ref{prop:quad-indeg2} imply that strongly connected graphs are rare in the quadratic case.
We also can say something about  Hamiltionian cycles.

\begin{proposition}
\label{prop:quad-H1}
For each graph $\cG(\lambda,X^2+X+a)$, if $a \ne  1/4$,
then  its connected component containing the vertex $0$ does not have a Hamiltonian cycle.
\end{proposition}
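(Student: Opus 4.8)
The plan is to derive a contradiction purely from the local structure at the vertex $0$, exploiting that $f = X^2 + X + a$ has no repeated root exactly because $a \ne 1/4$ (its discriminant is $1 - 4a$). First I would determine the in- and out-neighbours of $0$. Since an edge $x \to 0$ means $0 = f(x)$ under either subequation, the in-neighbours of $0$ are precisely the roots of $f$ in $\F_q$. Conversely, for any root $r$ of $f$ we have $f(r) = 0$, so an out-edge $r \to y$ forces $y^2 = 0$ or $\lambda y^2 = 0$, i.e.\ $y = 0$; hence every root of $f$ has out-degree $1$, its single out-edge being $r \to 0$. These two observations are the whole engine of the argument.

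Next I would split according to the number of roots of $f$. Because $a \ne 1/4$ makes the discriminant nonzero, $f$ has either no root or two distinct roots $r_1 \ne r_2$ in $\F_q$, never a single (double) root. If $f$ has no root, then $0$ has in-degree $0$; any Hamiltonian cycle of the component $C$ containing $0$ would traverse $0$ inside a directed cycle and thus require an incoming edge at $0$, which does not exist, so no such cycle can occur. If $f$ has two distinct roots, both lie in $C$ (each points to $0 \in C$). When $r_1, r_2 \ne 0$, each has its \emph{unique} out-edge going to $0$, so in a Hamiltonian cycle both $r_1$ and $r_2$ would have to be the (necessarily unique) predecessor of $0$, forcing $r_1 = r_2$, a contradiction. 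When one root equals $0$ (this is the borderline case $a = 0$, with roots $0$ and $-1$), the only out-edge at $0$ is the loop $0 \to 0$, which cannot appear in a Hamiltonian cycle of a component with more than one vertex; since $-1 \ne 0$ also lies in $C$, again no Hamiltonian cycle exists.

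I would then conclude that in every case the component containing $0$ admits no Hamiltonian cycle. The only delicate point, and the part I expect to require care rather than the elementary degree bookkeeping, is keeping the case split clean: one must observe that $a \ne 1/4$ is precisely the hypothesis ruling out a double root (so the root count is $0$ or $2$, never $1$, which is exactly where the obstruction would fail), and one must separately notice the coincidence $a = 0$, where a root merges with $0$ itself and the contradiction manifests as a forbidden loop instead of a doubled predecessor of $0$. Beyond these bookkeeping subtleties, everything rests on the single elementary fact that each vertex of a directed Hamiltonian cycle has in-degree and out-degree exactly one.
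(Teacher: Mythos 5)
Your proof is correct and follows essentially the same route as the paper's: both arguments observe that since $a \ne 1/4$ the vertex $0$ has in-degree $0$ or $2$, that each root of $f$ has $0$ as its unique successor, and then contradict the fact that every vertex on a directed Hamiltonian cycle has in-degree and out-degree exactly one. Your explicit treatment of the borderline case $a=0$ (where one predecessor of $0$ is $0$ itself and the obstruction is the forced loop) is a welcome piece of bookkeeping that the paper's proof handles only implicitly.
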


\begin{proof}
Notice that in the graph $\cG(\lambda,X^2+X+a)$, if $a \ne  1/4$, then the vertex $0$ either has zero in-degree or has in-degree $2$.
If the vertex $0$ has zero in-degree, then the component automatically has no Hamiltonian cycle.
If the vertex $0$ has in-degree $2$, let $x_1$ and $x_2$ be the two predecessors of $0$.
Note that the vertex $0$ is the only successor of $x_1$ and $x_2$.
So, there is no cycle going through both $x_1$ and $x_2$.
This completes the proof.
\end{proof}

Similarly, we have:

\begin{proposition}
\label{prop:quad-H2}
For each graph $\cG(\lambda,X^2+a)$, if $a \ne 0$, then  its connected component containing the vertex $0$ does not have a Hamiltonian cycle.
\end{proposition}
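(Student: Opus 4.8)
The plan is to mirror the proof of Proposition~\ref{prop:quad-H1}, exploiting the fact that in the graph $\cG(\lambda,X^2+a)$ the constraint $a \ne 0$ forces the vertex $0$ to have in-degree either $0$ or $2$, never $1$. First I would observe that an edge into the vertex $0$ corresponds to a solution $x$ of $x^2 + a = 0$ (equivalently $\lambda \cdot 0 - (x^2+a) = 0$, but since $0^2=0$ both subequations give the same condition $f(x)=0$, i.e. $x^2 = -a$). Because $a \ne 0$, the equation $x^2 = -a$ either has two distinct nonzero roots $\pm x_0$ (when $-a$ is a square) or no roots at all (when $-a$ is a non-square). This is the key arithmetic point: the two incoming edges to $0$, if they exist, come as a pair $\{x_0, -x_0\}$, so the in-degree of $0$ is exactly $0$ or $2$.

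Next I would split into the two cases exactly as in the proof of Proposition~\ref{prop:quad-H1}. If the vertex $0$ has zero in-degree, then no cycle can pass through $0$, so the connected component containing $0$ has no cycle visiting $0$ and in particular no Hamiltonian cycle. If instead $0$ has in-degree $2$, let $x_1 = x_0$ and $x_2 = -x_0$ be its two predecessors. The crucial structural fact is that the only successor of each $x_i$ that matters for forming a cycle through $0$ is $0$ itself: both $x_1$ and $x_2$ point to $0$, and any Hamiltonian cycle would have to enter $0$ along one of these two edges and therefore can use only one of $x_1, x_2$ as the immediate predecessor of $0$. Since a Hamiltonian cycle visits every vertex exactly once and passes through $0$ exactly once, it uses a single incoming edge at $0$; but then the other predecessor still needs its own outgoing edge in the cycle, and I would argue (as in Proposition~\ref{prop:quad-H1}) that no single cycle can accommodate the forced edge structure around both $x_1$ and $x_2$ together with $0$.

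The one point needing a little care — and the step I would treat as the main obstacle — is making the cycle-obstruction argument fully rigorous rather than merely gesturing at Figure-style intuition. Concretely, I would note that a Hamiltonian cycle $H$ of the component must contain $0$, hence contains exactly one edge into $0$ (from $x_1$ or $x_2$, say from $x_1$) and exactly one edge out of $0$. The vertex $0$ has out-degree given by the number of $y$ with $y^2 = f(0) = a$ or $\lambda y^2 = a$; since $\lambda$ is non-square and $a \ne 0$, exactly one of $a, a/\lambda$ is a square, giving $0$ out-degree $2$ (a pair $\pm y_0$). So the local picture at $0$ is symmetric, and the real constraint is that $x_2$, not used as the predecessor of $0$ in $H$, must still be entered and exited by $H$; but $x_2$'s out-edges all lead to $0$ (since $f(x_2) = x_2^2 + a = 0$ forces every successor of $x_2$ to satisfy $y^2 = 0$ or $\lambda y^2 = 0$, i.e. $y = 0$). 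Thus the only successor of $x_2$ is $0$, so in $H$ the vertex $x_2$ would have to be followed by $0$, contradicting that $x_1$ is the unique predecessor of $0$ in $H$. This is precisely the argument of Proposition~\ref{prop:quad-H1}, and I would simply invoke it with the observation that $f(x_i)=0$ pins both predecessors' sole successor to be $0$.

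I would therefore structure the proof as: (i) reduce to the in-degree dichotomy for $0$ via $a \ne 0$; (ii) dispatch the zero-in-degree case immediately; (iii) in the in-degree-$2$ case, use $x_1^2 + a = x_2^2 + a = 0$ to conclude that $0$ is the only successor of each $x_i$, so no cycle can traverse both $x_1$ and $x_2$, whence no Hamiltonian cycle exists. The whole argument is short and parallels Proposition~\ref{prop:quad-H1}; indeed I would write the proof as ``Similarly to the proof of Proposition~\ref{prop:quad-H1}'' and supply only the modified arithmetic identity $x^2 = -a$ in place of $x^2 = -a + 1/4$.
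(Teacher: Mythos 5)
Your proposal is correct and is essentially the paper's own argument: the paper proves Proposition~\ref{prop:quad-H2} by the word ``similarly'' pointing to the proof of Proposition~\ref{prop:quad-H1}, which is exactly your three-step scheme (in-degree of $0$ is $0$ or $2$ since $x^2=-a$ has zero or two roots when $a\ne 0$; the zero in-degree case is immediate; in the in-degree-$2$ case both predecessors $\pm x_0$ satisfy $f(\pm x_0)=0$, so $0$ is their only successor and no single cycle can pass through both). Your additional verifications (the out-degree of $0$, and the explicit contradiction that the unused predecessor would also have to be followed by $0$ in the cycle) only make the sketched argument more rigorous and introduce no gap.
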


We remark that in Proposition~\ref{prop:quad-H1}, if $a= 1/4$, then the graph $\cG(\lambda,X^2+X+1/4)$ is in fact generated by the equation 
$(Y - X - 1/2)(Y + X + 1/2) = 0$.

\section{Cubic case}
\label{sec:cubic}

For each polynomial $X^3 + aX +b$ over $\F_q$, if $4a^3+27b^2 \ne 0$,
then the equation $Y^2 =X^3 + aX + b$ defines an elliptic curve over $\F_q$.
In this section, we consider the graphs $\cG(\lambda,X^3+ aX + b)$ over $\F_q$.

Recall that $q$ is odd, and $\lambda$ is a non-square element in $\F_q$.

\subsection{Basic properties}

As before, we can easily find some isomorphism classes among the graphs $\cG(\lambda,X^3+aX+b)$
when $\lambda$ runs over the non-square elements and $a,b$ run over $\F_q$.

\begin{proposition}
\label{prop:cubic1}
For any $a,b\in \F_q$,  the graph $\cG(\lambda,X^3+aX+b)$ is isomorphic to
the graph $\cG(\lambda^{-1},X^3+  \lambda^{-2}aX + \lambda^{-3}b)$.
\end{proposition}

\begin{proof}
Note that the isomorphism is induced by the bijection map $\psi$ from $\F_q$ to itself defined by $\psi(x)=\lambda^{-1} x$.
\end{proof}

When the characteristic of $\F_q$ (that is, $p$) is greater than 3,
 it is not hard to show that a polynomial $X^3+aX+b$ is a permutation polynomial over $\F_q$
 if and only if $3 \nmid q-1$ and $a=0$; see \cite[Theorem 2.2]{MW}.

So, in the sequel,  we only consider the graph $\cG(\lambda,X^3+a)$.
Note that if $3 \nmid q-1$, then each polynomial $X^3+a$ is a permutation polynomial over $\F_q$,
and thus all the results in Section~\ref{sec:perm} automatically hold for the graph $\cG(\lambda,X^3+a)$.

We know that each vertex in the graph $\cG(\lambda, X^3+a)$ has positive out-degree.
However, this is not always true for the in-degree.
If $3 \nmid q-1$, by Proposition~\ref{prop:perm-inout}
each vertex in the graph $\cG(\lambda, X^3+a)$ has positive in-degree.
However, when $3 \mid q-1$, there are many vertices having zero in-degree,
and more precisely we can get a similar result as in Proposition~\ref{prop:quad-indeg2}
by using a different approach and in a stronger form.

\begin{proposition}
\label{prop:cubic-indeg}
If $3 \mid q-1$, then for each graph $\cG(\lambda, X^3+a)$ over $\F_q$, there are at least $N$ vertices having zero in-degree, where
\begin{equation*}
N =
\left\{\begin{array}{ll}
 (q-1)/3 & \text{if $-a$ is a cubic element in $\F_q$,}\\
 (q-7)/3  & \text{otherwise.}\\
\end{array}\right.
\end{equation*}
In particular, there exists at least one vertex in $\cG(\lambda, X^3+a)$ having zero in-degree.
\end{proposition}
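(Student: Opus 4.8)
The plan is to count the vertices of positive in-degree directly, using the fact that the value set of $f(X)=X^3+a$ is a translate of the set of cubes, and to avoid character sums entirely (this is the ``different approach'' giving the ``stronger form'' compared with Proposition~\ref{prop:quad-indeg2}). Write $V=\cV(X^3+a)=\{x^3+a:\,x\in\F_q\}$. Since $3\mid q-1$, the cubing map is $3$-to-$1$ on $\F_q^*$ and fixes $0$, so the set of cubes has size $(q-1)/3+1$; translating by $a$ preserves cardinality, hence $|V|=(q+2)/3$. The crucial reformulation is that a vertex $y$ has positive in-degree if and only if it is a successor of some vertex, i.e. if and only if $y^2\in V$ or $\lambda y^2\in V$.

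Next I would bound the number of such successor vertices in terms of $|V|$. For each fixed value $v\in V$ the successors arising from $v$ are the solutions of $y^2=v$ together with those of $\lambda y^2=v$. If $v\neq 0$, then exactly one of $v$ and $v/\lambda$ is a square, because $\lambda$ is a non-square, so precisely one of these two relations is solvable and it contributes the two vertices $\pm\sqrt{v}$ or $\pm\sqrt{v/\lambda}$; the value $v=0$ contributes only the single vertex $0$. Consequently the number of vertices of positive in-degree is at most $2\,|V\setminus\{0\}|+[\,0\in V\,]$, and every remaining vertex has zero in-degree.

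The decisive case distinction is whether $0\in V$, which happens exactly when $-a$ is a cube (with $a=0$ included, as $0$ is a cube). If $-a$ is a cube, then $|V\setminus\{0\}|=(q-1)/3$, so there are at most $2(q-1)/3+1$ vertices of positive in-degree, leaving at least $q-2(q-1)/3-1=(q-1)/3$ vertices of zero in-degree. If $-a$ is not a cube, then $0\notin V$ and $|V\setminus\{0\}|=(q+2)/3$, so there are at most $2(q+2)/3$ vertices of positive in-degree, leaving at least $q-2(q+2)/3=(q-4)/3\ge (q-7)/3$ of zero in-degree. This yields the two stated lower bounds (indeed a slightly stronger estimate in the non-cube case, which is harmless).

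Finally, for the ``in particular'' assertion I would observe that the vertex $0$ has positive in-degree precisely when $-a$ is a cube, since both incoming relations at $0$ reduce to $x^3+a=0$; hence in the non-cube case the vertex $0$ itself always has zero in-degree, which settles the small values of $q$ for which $(q-7)/3\le 0$, while in the cube case $(q-1)/3\ge 1$ for all admissible $q$. I expect the only delicate point to be the bookkeeping of the square/non-square dichotomy guaranteeing that each nonzero element of $V$ contributes exactly two candidate predecessors and that $0\in V$ contributes just one, together with the identification of ``$0\in V$'' with ``$-a$ is a cube.'' Since we need only an upper bound on the positive-in-degree count, any coincidences among the vertices $\pm\sqrt{v}$ only improve the estimate, so no further error analysis is required.
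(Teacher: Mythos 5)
Your proof is correct, and it runs in the opposite direction from the paper's. The paper bounds the number of zero in-degree vertices from below: it takes $R$, the set of non-cubes, maps each $x \in R$ to the class $\{\pm y\}$ determined by $x + a = y^2$ or $x + a = \lambda y^2$, and applies pigeonhole (the size $|R| = 2(q-1)/3$ exceeds the number $(q+1)/2$ of $\pm$-classes) to force collisions; a collision means both $y^2 - a$ and $\lambda y^2 - a$ are non-cubes, so the pair $\pm y$ has zero in-degree, with a further gain of one collision when $-a$ is a cube. You instead bound the number of positive in-degree vertices from above: every such vertex satisfies $y^2 \in V$ or $\lambda y^2 \in V$ with $V = \cV(X^3+a)$ of size $(q+2)/3$, and each $v \in V \setminus \{0\}$ admits exactly one solvable twist relation (for $v \ne 0$ precisely one of $v$, $v/\lambda$ is a square, since the quadratic character of $\lambda$ is $-1$), hence contributes at most two vertices, while $v=0$ contributes only the vertex $0$. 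The two arguments exploit the same underlying structure --- the correspondence between $\pm$-classes $\{\pm y\}$ and twist pairs $(y^2-a,\lambda y^2-a)$, one of each quadratic type --- so they are dual forms of a single counting idea; but your complementary count is tidier and yields the slightly stronger bound $(q-4)/3$ in the non-cube case. That sharpening has a concrete payoff: the paper's bound degenerates to $0$ at $q=7$, forcing it to verify the final claim there by direct computation, whereas for you either $(q-4)/3 \ge 1$ or your separate observation that the vertex $0$ has zero in-degree exactly when $-a$ is not a cube (both incoming relations at $0$ reduce to $x^3 = -a$) settles $q=7$ with no computation. All steps check out: $|V| = (q-1)/3 + 1$, the identification of $0 \in V$ with $-a$ being a cube, and the harmlessness of coincidences among the vertices $\pm\sqrt{v}$, which only lower the positive-in-degree count.
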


\begin{proof}
Let  $Q = \{x^3: \, x\in \F_q \}$.
Since $3 \mid q-1$, we have
$$
|Q| = \frac{q-1}{3} + 1.
$$
Let $R = \F_q \setminus Q$, and let $S$ be the set of equivalence classes of $\F_q$ modulo $\pm 1$.
Clearly, we have
\begin{equation}
\label{eq:RS}
|R| = q -  \left(\frac{q-1}{3} + 1 \right) = \frac{2(q-1)}{3},  \qquad |S| = \frac{q+1}{2}.
\end{equation}

Define the map $\varphi$ from $R$ to $S$ by $\varphi(x) = \{\pm y\}$ 
if either $y^2=x+a$ or $\lambda y^2=x +a$.
If $x_1,x_2 \in R$ with $x_1 \ne x_2$ such that $\varphi(x_1)=\varphi(x_2)=\{\pm y_0\}$ for some $y_0 \in \F_q$, then
either $y_0^2=x_1+a, \lambda y_0^2 = x_2+a$, or $\lambda y_0^2=x_1+a,  y_0^2 = x_2+a$.
This implies that there is no $x_3 \in R$ with $x_3 \ne x_1$ and $x_3 \ne x_2$ such that $\varphi(x_3)=\{\pm y_0\}$.
By the definition of $\varphi$,  both $y_0^2-a$ and $\lambda y_0^2-a$ are not in $Q$,
and thus the vertices $\pm y_0$ have zero in-degree in the graph $\cG(\lambda, X^3+a)$.

Now, define the set
$$
T = \{x\in R: \, \exists \ x^\prime \in R, x^\prime \ne x, \varphi(x^\prime)=\varphi(x) \}.
$$
By the above discussion, we know that $|T|$ is even, and the number of vertices having zero in-degree is at least $|T|$.
So, it suffices to get a lower bound for $|T|$.

Considering the size of $\varphi(R)$ and noticing $|\varphi(T)| =|T|/2$, we have
\begin{equation}
\label{eq:RST}
|\varphi(R)| = |R| - |T|/2 \le |S|,
\end{equation}
which, together with \eqref{eq:RS}, implies that
$$
|T| \ge \frac{q-7}{3}.
$$

Moreover, if $-a \in Q$ (that is, $-a$ is a cubic element in $\F_q$), then there is no  $x \in R$ such that $\varphi(x) = \{0\}$.
So, the inequality in \eqref{eq:RST} becomes
$$
|\varphi(R)| = |R| - |T|/2 \le |S| - 1,
$$
which gives
$$
|T| \ge \frac{q-1}{3}.
$$
This completes the proof for the choice of $N$.

For the final claim, by the choice of $N$, we only need to consider the case $q=7$.
By direction computation, if $q=7$, indeed there exists at least one vertex in each graph $\cG(\lambda, X^3+a)$ having zero in-degree.
\end{proof}

As in Proposition~\ref{prop:quad-H2}, we have:

\begin{proposition}
If $3 \mid q-1$, then for each graph $\cG(\lambda,X^3+a)$ with $a \ne 0$,  its connected component containing the vertex $0$ does not have a Hamiltonian cycle.
\end{proposition}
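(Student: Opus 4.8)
The plan is to follow the proof of Proposition~\ref{prop:quad-H2} almost verbatim, adapting only the in-degree count to the cubic setting. Write $f(X) = X^3 + a$. First I would determine the in-degree of the vertex $0$. There is an edge from $x$ to $0$ precisely when $(0 - f(x))(\lambda \cdot 0 - f(x)) = f(x)^2 = 0$, that is, when $x^3 = -a$. Since $3 \mid q-1$, the cubing map on $\F_q^*$ is three-to-one, so for $a \ne 0$ the equation $x^3 = -a$ has either $0$ or $3$ solutions in $\F_q$. Hence the vertex $0$ has in-degree either $0$ or $3$, and I would split the argument into these two cases.

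If the in-degree of $0$ is $0$, then $0$ has no incoming edge and so cannot lie on any cycle. The connected component containing $0$ is nontrivial, since $0$ has out-degree $2$ because $f(0)=a \ne 0$; nevertheless it admits no Hamiltonian cycle, because such a cycle would have to pass through $0$.

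The substantive case is in-degree $3$. Let $x_1, x_2, x_3$ be the three predecessors of $0$. Each satisfies $x_i^3 + a = 0$, so $f(x_i)=0$, and therefore the successors of $x_i$ are the solutions $y$ of $(y^2 - 0)(\lambda y^2 - 0) = \lambda y^4 = 0$; since $\lambda \ne 0$ this forces $y=0$. Thus each $x_i$ has $0$ as its unique successor. In any Hamiltonian cycle of the component, every vertex uses exactly one outgoing edge, so each $x_i$ is forced to use its single edge $x_i \to 0$; but then $0$ would receive three incoming edges in the cycle, contradicting the fact that a cycle uses exactly one incoming edge at each vertex. Hence no Hamiltonian cycle can exist.

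I do not expect a genuine obstacle here: the whole argument rests on the elementary in-degree count (controlled by $3 \mid q-1$ and $a \ne 0$) together with the observation that any zero of $f$ has out-degree one with forced successor $0$. The only point requiring a moment's care is to confirm that the predecessors of $0$ necessarily lie in the same connected component as $0$, so that any Hamiltonian cycle of that component would be obliged to traverse all of $x_1, x_2, x_3$; this is immediate, since each $x_i$ is joined to $0$ by an edge.
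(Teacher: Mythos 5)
Your proof is correct and is essentially the argument the paper intends: the paper gives this proposition only by analogy (``As in Proposition~\ref{prop:quad-H2}''), whose proof has exactly your structure --- the vertex $0$ has in-degree either $0$ or $3$, and in the latter case all predecessors of $0$ have $0$ as their unique successor, so no single cycle can pass through more than one of them. Your adaptation of the in-degree count via $3 \mid q-1$ (three-to-one cubing on $\F_q^*$, hence $0$ or $3$ distinct roots of $x^3=-a$) is precisely what the analogy requires, and your extra checks (out-degree $2$ at $0$, predecessors lying in the same component) are sound.
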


\subsection{Small connected components}
\label{sec:cubic-conn}

As in Section~\ref{sec:linear-conn}, we determine small connected components for the graphs $\cG(\lambda,X^3+a)$
over $\F_q$ when $3 \nmid q-1$.

\begin{proposition}   \label{prop:cubic-com2}
Assume $3 \nmid q-1$.
For  any $a \in \F_q^*$, the graph $\cG(\lambda,X^3+a)$ has a connected component with two vertices
if and only if $\lambda \ne -1$ and  $a = (\lambda +1)(\lambda -1)^2/8$.
In particular, if $\lambda \ne -1$ and  $a = (\lambda +1)(\lambda -1)^2/8$,
then the vertices $(1-\lambda)/2, (\lambda -1)/2$ form a connected component in $\cG(\lambda,X^3+a)$.
\end{proposition}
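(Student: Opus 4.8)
The plan is to follow exactly the strategy used for the linear case in Proposition~\ref{prop:linear-com2}, adapting the algebra to $f(X) = X^3 + a$. Since $3 \nmid q-1$, the polynomial $X^3 + a$ is a permutation polynomial, so Propositions~\ref{prop:perm-inout} and \ref{prop:perm-conn} apply: every connected component is strongly connected, and (as $a \in \F_q^*$ gives $f(0) \ne 0$) every vertex other than $f^{-1}(0)$ has out-degree $2$. For the forward direction, suppose $C$ is a connected component with exactly two vertices. I would first observe that whenever $w \to v$ is an edge, so is $w \to -v$ (because $v^2 = (-v)^2$), so $C$ is closed under negation; as $0$ is the only self-negating element, $C$ cannot contain $0$, whence its two vertices are $x$ and $-x$ for some $x \in \F_q^*$. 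Being strongly connected, every out-edge of $x$ and of $-x$ stays inside $C$; in particular neither vertex maps to $0$, so neither equals $f^{-1}(0)$ and both have out-degree $2$. The two out-edges of $x$ solve $(y^2 - f(x))(\lambda y^2 - f(x)) = 0$ with $f(x) \ne 0$; since $\lambda$ is a non-square, exactly one subequation is solvable and contributes a pair $\pm c$ with $c^2 = f(x)$ or $c^2 = f(x)/\lambda$. As these out-neighbours are $\{x,-x\}$, we obtain $f(x) = x^2$ or $f(x) = \lambda x^2$, and likewise for $f(-x)$.

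Next I would discard the two combinations in which $f(x)$ and $f(-x)$ are both squares or both non-squares, since each forces $x^3 = -x^3$, hence $x = 0$. So, after possibly swapping $x$ and $-x$, I may assume
\begin{equation*}
x^2 = x^3 + a, \qquad \lambda x^2 = -x^3 + a.
\end{equation*}
Subtracting gives $(1-\lambda)x^2 = 2x^3$, whence $x = (1-\lambda)/2$ (using $x \ne 0$), and adding gives $(1+\lambda)x^2 = 2a$, whence $a = (\lambda+1)(\lambda-1)^2/8$ after substituting $x^2 = (\lambda-1)^2/4$. If $\lambda = -1$ this yields $a = 0$, contradicting $a \in \F_q^*$, so $\lambda \ne -1$. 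This establishes the necessity of the stated conditions and identifies the vertices as $\pm(1-\lambda)/2$.

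For the converse, given $\lambda \ne -1$ and $a = (\lambda+1)(\lambda-1)^2/8$, I would set $x = (1-\lambda)/2$ and verify by direct substitution that $x^2 = f(x)$ and $\lambda x^2 = f(-x)$; after factoring out $(1-\lambda)^2/8$, both reduce to $(1-\lambda)^2/8 \cdot 2 = (1-\lambda)^2/4$. These identities say the out-neighbours of $x$ and of $-x$ are exactly $\{x,-x\}$; moreover, since $f$ is a bijection, the unique predecessors of $x$ (solving $f(w) = x^2$ and $f(w) = \lambda x^2$) are precisely $x$ and $-x$, and the same holds for $-x$. Hence $\{x,-x\}$ is closed under both in- and out-edges and is strongly connected, so it is a connected component with two vertices.

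The main obstacle is the bookkeeping in the forward direction: arguing that the two out-edges of each vertex come from a single subequation, pinning down which of $f(x), f(-x)$ is the square, and eliminating the degenerate combinations. Once the two-equation system is in place, the algebra is routine and parallels the linear computation, and the converse is a direct verification.
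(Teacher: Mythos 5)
Your proof is correct and takes essentially the same approach as the paper's, whose own proof is terse (it says ``as before'', referring to the linear case, Proposition~\ref{prop:linear-com2}, reduces without loss of generality to the system $x^2 = x^3 + a$, $\lambda x^2 = -x^3 + a$, and solves it, declaring the converse ``straightforward''); your write-up simply fills in the same steps explicitly, namely closure of the component under negation, elimination of the two degenerate sign combinations, and the use of bijectivity of $X^3+a$ (valid since $3 \nmid q-1$) to control the in-edges in the converse. One cosmetic slip: after factoring out $(1-\lambda)^2/8$, the second identity $\lambda x^2 = f(-x)$ reduces to $2\lambda = 2\lambda$ rather than to $2$, but the identity of course still holds, so nothing is affected.
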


\begin{proof}
As before, if the graph $\cG(\lambda,X^3+a)$ has a connected component with two vertices,
then these two vertices are $\pm x$ for some $x \in \F_q^*$; and so without loss of generality, we can assume
$$
x^2=x^3 +a, \qquad \lambda x^2 = -x^3 +a,
$$
which gives
$$
\lambda \ne -1, \qquad a = (\lambda +1)(\lambda -1)^2/8, \qquad x = (1-\lambda)/2.
$$
The rest is straightforward.
\end{proof}

We remark that if $-1$ is a non-square element in $\F_q$,
then the graph $\cG(\lambda,X^3)$ has a connected component with two vertices if and only if $\lambda = -1$
(in fact these two vertices are $1, -1$).

\begin{proposition}   \label{prop:cubic-com3}
Assume $3 \nmid q-1$.
For any $a \in \F_q$, the graph $\cG(\lambda,X^3+a)$ has a connected component with three vertices
if and only if either $\lambda = 2, a = 1$, or $\lambda = 1/2, a = 1/8$.
In particular, if $2$ is a non-square element in $\F_q$,
then the vertices $0, 1$ and $-1$ form a connected component in $\cG(2,X^3+1)$,
and the vertices $0, 1/2$ and $-1/2$ form a connected component in $\cG(1/2,X^3+1/8)$.
\end{proposition}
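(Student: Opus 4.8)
The plan is to mirror the structure of Proposition~\ref{prop:linear-com3}, adapting the computations to the cubic map $x \mapsto x^3 + a$. First I would establish the sufficiency by direct verification: assuming $2$ is a non-square in $\F_q$, I check that in $\cG(2,X^3+1)$ the vertices $0,1,-1$ are closed under the edge relations. The key edges are those coming from $Y^2 = f(X)$ and $\lambda Y^2 = f(X)$ with $f(X)=X^3+1$; one confirms $1^2 = 1^3+1 = 2$ fails but $\lambda \cdot 1^2 = 2 = 1^3+1$ holds, together with $0 \mapsto \pm 1$ via $(\pm 1)^2 = 0^3+1 = 1$, so that $\{0,1,-1\}$ forms a genuine connected component. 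The symmetric check for $\cG(1/2,X^3+1/8)$ with vertices $0,\pm 1/2$ is analogous, using $(1/2)^2 = 1/4 = (1/2)^3 + 1/8$ and $\lambda(1/2)^2 = (1/8)/2 \cdot \ldots$; I would simply record the defining equations and verify them.

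For the necessity, the approach is the same as in the linear case. Suppose $\cG(\lambda,X^3+a)$ has a connected component $C$ with exactly three vertices. Since the graph is built from $f(X)=X^3+a$ and the edge relation is symmetric under $y \mapsto -y$, the vertex set of $C$ must be closed under negation; as three is odd, $0$ must lie in $C$. I would then argue, exactly as in Proposition~\ref{prop:linear-com3}, that the unique predecessor of $0$ forces the other two vertices to be $a$ and $-a$ with $a \ne 0$, and that there are edges from $0$ to $\pm a$ and an edge from $a$ to $-a$ (or its negative). Writing out the two resulting systems
\[
a^2 = a^3 + a, \qquad \lambda a^2 = a^3 + a
\]
(and the variant with the roles of $Y^2$ and $\lambda Y^2$ swapped) and solving gives either $\lambda = 2, a = 1$ or $\lambda = 1/2, a = 1/8$, completing the classification.

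The main obstacle, such as it is, lies in correctly bookkeeping which subequation supplies each of the three required edges, since each nonzero vertex has two outgoing edges (to $y$ and $-y$) and I must ensure no fourth vertex is dragged into $C$. Concretely, the constraint that $0$ has in-degree $1$ (Proposition~\ref{prop:perm-inout}, valid here because $3 \nmid q-1$ makes $X^3+a$ a permutation polynomial) pins down $f^{-1}(0)$ as one endpoint, and the requirement that $C$ be closed under all out-edges is what rules out extra vertices. I expect the algebra to collapse cleanly: dividing by $a \ne 0$ in the first system yields $a = a^2 + 1$ and $\lambda a = a^2 + 1$, whence $\lambda = 1$ is excluded and the consistent branch forces the stated values. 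The role of the hypothesis $3 \nmid q-1$ is only to guarantee $f$ is a permutation polynomial so that Proposition~\ref{prop:perm-inout} applies and the negation-closure argument is valid; I would note this explicitly at the start of the proof.
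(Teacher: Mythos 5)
Your high-level plan is the right one and matches the paper's (negation-closure of the component forces $0 \in C$; the unique predecessor of $0$ pins down the other two vertices; then solve the resulting edge systems), and your sufficiency verifications are correct. But the necessity step has a genuine error: you transplant the linear-case identification of the component as $\{0, a, -a\}$ verbatim, whereas for $f(X) = X^3 + a$ the unique predecessor of $0$ is the solution of $x^3 + a = 0$, i.e.\ $x = -b$ where $b^3 = a$ (the cube root exists and is unique precisely because $3 \nmid q-1$). So the three vertices are $0, \pm b$ with $b^3 = a$, \emph{not} $0, \pm a$. This is not cosmetic: your own sufficiency check exhibits the component $\{0, 1/2, -1/2\}$ of $\cG(1/2, X^3 + 1/8)$, whose nonzero vertices are $\pm 1/2 \ne \pm 1/8 = \pm a$. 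If you run the bookkeeping with vertices $\pm a$, the branches for the edge from $0$ to $a$ are $a^2 = a$ or $\lambda a^2 = a$, and for the out-edges of $a$ they are $a^2 = a^3 + a$ or $\lambda a^2 = a^3 + a$; the only combination consistent in general is $a = 1$, $\lambda = 2$, so you recover the first solution only by the accident that $b = a$ there, and you fail to produce $(\lambda, a) = (1/2, 1/8)$ at all (except in tiny characteristics such as $3$ or $5$, where $1/8$ happens to coincide with $\pm 1/2$). Moreover, your displayed pair $a^2 = a^3 + a$, $\lambda a^2 = a^3 + a$, read as a simultaneous system, subtracts to $(\lambda - 1)a^2 = 0$ and is vacuous, so the claim that ``the consistent branch forces the stated values'' does not go through as written.

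The paper's proof does exactly what you intended, but with the correct vertices: the component is $0, \pm b$ with $b^3 = a$, there is an edge from $0$ to $b$ (giving $b^2 = a$ or $\lambda b^2 = a$) and an edge from $b$ to $-b$ (giving $b^2 = b^3 + a$ or $\lambda b^2 = b^3 + a$, by Proposition~\ref{prop:perm-inout}); using the same subequation for both edges forces $b^3 = 0$, contradicting $b \ne 0$, so only the two mixed systems survive, and combined with $b^3 = a$ they yield $(b, a, \lambda) = (1, 1, 2)$ or $(1/2, 1/8, 1/2)$. With the substitution of $b = a^{1/3}$ for $a$ throughout your necessity argument, your proof becomes the paper's.
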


\begin{proof}
We only prove the necessity.
As before, if the graph $\cG(\lambda,X^3+a)$ has a connected component with three vertices,
then these three vertices are $0, \pm b$, where $b^3=a$.
So, there must be an edge from $0$ to $b$ and an edge from $b$ to $-b$ (by Proposition~\ref{prop:perm-inout}).
This gives either (noticing $b \ne 0$)
$$
b^2 = a, \qquad  \lambda b^2 = b^3 +a,
$$
or
$$
\lambda b^2 = a, \qquad   b^2 = b^3 +a.
$$
Hence, we obtain either $\lambda = 2, a = 1, b=1$, or $\lambda = 1/2, a = 1/8, b=1/2$.
\end{proof}

We remark that by Proposition~\ref{prop:cubic1}, the graph $\cG(2,X^3+1)$ is isomorphic to the graph $\cG(1/2,X^3+1/8)$.
By Proposition~\ref{prop:cubic-com3} we also know that if $2$ is a square in $\F_q$, then connected components with three vertices
can not occur in the graphs $\cG(\lambda,X^3+a)$ over $\F_q$ (note that $\lambda$ is set to be a non-square element in $\F_q$ throughout the paper).

\begin{proposition}   \label{prop:cubic-com4}
Assume $3 \nmid q-1$.
For any $a \in \F_q^*$, the graph $\cG(\lambda,X^3+a)$ has no connected component with four vertices.
\end{proposition}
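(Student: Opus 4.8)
The plan is to follow the proof of Proposition~\ref{prop:linear-com4} closely, replacing the linear identities by the cubic ones. Since $3 \nmid q-1$, the polynomial $X^3+a$ is a permutation polynomial, so Proposition~\ref{prop:perm-inout} applies; because $f(0)=a\ne 0$, every vertex has out-degree $2$ and the only vertices whose in-degree is not $2$ are $0$ and $f^{-1}(0)$. First I would record the basic symmetry: whenever there is an edge from $u$ to $v$ there is also an edge from $u$ to $-v$, since the two out-neighbours of $u$ are $\pm v$ with $v^2=f(u)$ or $\lambda v^2=f(u)$. Hence every connected component is closed under $x\mapsto -x$. Consequently a component of size $4$ cannot contain $0$ (its remaining vertices would split into $\pm$-pairs, forcing odd cardinality), nor can it contain $f^{-1}(0)$ (whose only successor is $0$). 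Therefore all four of its vertices have in-degree and out-degree $2$, and the component is a union of two $\pm$-pairs $\{x,-x\}$ and $\{y,-y\}$ with $x,y\ne 0$ and $x\ne \pm y$.

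Next I would show that, up to relabelling, only the two configurations of Figure~\ref{fig:linear-4} can occur. Each vertex sends its two out-edges into one of the pairs $\{x,-x\}$ or $\{y,-y\}$, and the in-degree-$2$ condition forces exactly two of the four vertices to point into each pair. Ruling out the splitting in which $\{x,-x\}$ points to itself and $\{y,-y\}$ points to itself (which disconnects the two pairs) leaves either the loop-free ``bipartite'' configuration on the left of Figure~\ref{fig:linear-4}, or, after renaming, the configuration with loops at two vertices on the right of Figure~\ref{fig:linear-4}.

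In the loop-free configuration the relations $f(x),f(-x)\in\{y^2,\lambda y^2\}$ and $f(y),f(-y)\in\{x^2,\lambda x^2\}$ hold, and since $f$ is injective the two values in each pair are distinct; adding them gives $2a=(1+\lambda)y^2=(1+\lambda)x^2$. Thus either $\lambda\ne -1$, forcing $x^2=y^2$ and hence $x=\pm y$, or $\lambda=-1$, forcing $a=0$; both are impossible. For the two-loop configuration, writing $f(x)\in\{x^2,\lambda x^2\}$ for the loop at $x$ and similarly at the other vertices, the requirement that the two in-edges at each vertex carry different weights (as their two predecessors are $f^{-1}(x^2)$ and $f^{-1}(\lambda x^2)$) cuts the a priori sixteen weight choices down to exactly four, matching the subcases (1)--(4) of Proposition~\ref{prop:linear-com4}. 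In each subcase I would eliminate $a$ between the two loop equations and between the two remaining equations; cancelling the common factor $x-y$ (legitimate since $x\ne y$) yields, in two subcases, $(\lambda-1)(x^2+y^2)=0$, hence $x^2+y^2=0$ and so $a=0$, and, in the other two, $(\lambda-1)(x+y)=0$, hence $y=-x$. As $\lambda$ is a non-square we have $\lambda\ne 1$, so every subcase contradicts either $a\in\F_q^*$ or $x\ne \pm y$.

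The genuinely delicate part is the structural bookkeeping rather than the arithmetic: one must verify that the two configurations are exhaustive and that the weight constraints leave precisely the four admissible subcases, so that no $2$-regular component on four vertices is overlooked. Once the configurations are pinned down, each cubic subcase collapses, after the cancellation of $x-y$, to a single linear identity in $x+y$ or $x^2+y^2$ that is killed by $\lambda\ne 1$; I therefore expect the main effort to lie in the enumeration of configurations, with the closing identities being routine.
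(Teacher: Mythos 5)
Your proposal is correct and takes essentially the same route as the paper: the same reduction of a four-vertex component to the two configurations of Figure~\ref{fig:linear-4} (with the enumeration you spell out being exactly what the paper leaves as ``easy to see''), the same four weight subcases (1)--(4), and the same closing contradictions, your addition tricks ($2a=(1+\lambda)x^2=(1+\lambda)y^2$ in the loop-free case, and $(\lambda-1)(x+y)=0$ or $(\lambda-1)(x^2+y^2)=0$ followed by $a=0$ in the loop subcases) merely streamlining the paper's explicit solving, which instead lands on $\lambda=-1$, $a=0$, $x^5=1$ and on $\lambda=1$ or $\lambda^2=-1$, $a=0$. One cosmetic point: in subcases (1) and (4) eliminating $a$ between the loop equations produces $x^2-\lambda y^2=x^3-y^3$ (resp.\ $\lambda x^2-y^2=x^3-y^3$), which has no common factor $x-y$ to cancel; your identity $(\lambda-1)(x^2+y^2)=0$ follows instead by adding the two eliminated relations (or simply adding the four equations in pairs, which gives $x^2+y^2=2a$ and $\lambda(x^2+y^2)=2a$ directly), so the stated conclusions stand unchanged.
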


\begin{proof}
As before, if the graph $\cG(\lambda,X^3 +a)$
has a connected component with four vertices,
then there are only two possible cases as in Figure~\ref{fig:linear-4}.

For the first case when there is no fixed vertex in Figure~\ref{fig:linear-4},
we consider either
$$
y^2 = x^3 + a, \quad \lambda y^2 = -x^3+a, \quad x^2 = y^3 + a, \quad \lambda x^2 = -y^3+a,
$$
or
$$
y^2 = x^3 + a, \quad \lambda y^2 = -x^3 +a, \quad x^2 = -y^3 + a, \quad \lambda x^2 = y^3 +a.
$$
Then, noticing $x \ne \pm y$, we obtain $\lambda = -1, a = 0, x^5 =1$ and $x \ne 1$, $y=1/x$.
This contradicts with $a \ne 0$.
So, the first case cannot happen.

For the second case in Figure~\ref{fig:linear-4},
 noticing $ (x / y)^3 \ne \pm 1$ (due to $x \ne \pm y$ and $3 \nmid q-1$),
 we only need to consider the following four subcases:
\begin{itemize}
\item[(1)]  $x^2 = x^3 + a, \quad y^2 = -x^3 +a, \quad \lambda y^2 = y^3 + a, \quad \lambda x^2 = -y^3 + a$;

\item[(2)]  $x^2 = x^3 + a, \quad \lambda y^2 = -x^3 +a, \quad  y^2 = y^3 + a, \quad \lambda x^2 = -y^3 + a$;

\item[(3)]  $\lambda x^2 = x^3 + a, \quad  y^2 = -x^3 +a, \quad \lambda y^2 = y^3 + a, \quad  x^2 = -y^3 + a$;

\item[(4)]  $\lambda x^2 = x^3 + a, \quad \lambda y^2 = -x^3 +a, \quad  y^2 = y^3 + a, \quad  x^2 = -y^3 + a$.
\end{itemize}
By direct calculations,
from Cases (2) and (3) we obtain $\lambda = 1$, which contradicts with the assumption that $\lambda$ is non-square;
and Case (1) gives $\lambda^2 = -1, a = 0, x=1, y= \lambda$; Case (4) gives $\lambda^2 = -1, a = 0, x = \lambda, y =1$.
Hence,  the second case also cannot happen (due to  $\lambda$ non-square and $a \ne 0$).

Therefore, there is no connected component with four vertices in the graph $\cG(\lambda,X^3 +a)$ with $a \in \F_q^*$.
\end{proof}

From the above proof, we directly obtain:

\begin{proposition}
Assume $3 \nmid q-1$.
If $-1$ is a non-square element in $\F_q$,
then the graph $\cG(\lambda,X^3)$ has a connected component with four vertices if and only if $5 \mid q-1$ and $\lambda = -1$
\textup{(}in fact these four vertices are of the form  $x, -x, 1/x, -1/x$, where $x^5=1$ and $x \ne 1$, 
corresponding to the first case in Figure~\ref{fig:linear-4}\textup{)}.
Otherwise, if $-1$ is a square in $\F_q$,
then the graph $\cG(\lambda,X^3)$ has a connected component with four vertices if and only if $4 \mid q-1$ and $\lambda^2 = -1$
\textup{(}in fact these four vertices are $1, -1, \lambda, -\lambda$, corresponding to the second case in Figure~\ref{fig:linear-4}\textup{)}.
\end{proposition}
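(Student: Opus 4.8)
The plan is to specialize the case analysis already carried out in the proof of Proposition~\ref{prop:cubic-com4} to the polynomial $f(X)=X^3$ (that is, to $a=0$), and then to decide, for each of the two admissible shapes of a four-vertex component, exactly when that shape is actually realized over $\F_q$. Since $3 \nmid q-1$, the polynomial $X^3$ is a permutation polynomial, so Proposition~\ref{prop:perm-inout} applies and any connected component with four vertices must be one of the two configurations depicted in Figure~\ref{fig:linear-4}: the first with no fixed vertex, the second with two loops. First I would recall that the very same systems of equations written down in the proof of Proposition~\ref{prop:cubic-com4}, now solved without imposing $a \neq 0$, yield precisely the following outcomes. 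The first configuration forces $\lambda=-1$, together with $x^5=1$, $x\neq 1$ and $y=1/x$; the admissible subcases of the second configuration force $\lambda^2=-1$ with the four vertices equal to $1,-1,\lambda,-\lambda$. These are exactly the two outcomes quoted in Proposition~\ref{prop:cubic-com4}, read now as existence statements rather than as contradictions.

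Next I would treat the two configurations as genuinely disjoint regimes, separated by the quadratic character of $-1$. If the first configuration occurs then $\lambda=-1$ is a non-square, so $-1$ is a non-square; conversely, if $-1$ is a square then $\lambda=-1$ would be a square, contradicting the standing hypothesis that $\lambda$ is non-square, so the first configuration cannot occur in that case. Dually, the second configuration requires $\lambda^2=-1$, which exhibits $-1=\lambda^2$ as a square, so it can occur only when $-1$ is a square. This gives the dichotomy of the statement for free. Within the regime where $-1$ is a non-square, the existence of $x$ with $x^5=1$ and $x\neq 1$ is equivalent to the existence of a primitive fifth root of unity in $\F_q$, that is, to $5 \mid q-1$; within the regime where $-1$ is a square, the existence of $\lambda$ with $\lambda^2=-1$ is equivalent to $4 \mid q-1$.

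For the two converse directions I would verify closure, namely that the proposed quadruples really do form complete connected components. In the first case, with $\lambda=-1$, $x^5=1$, $x\neq 1$ and using the relations $x^{-2}=x^3$ and $x^{-3}=x^2$, I would check directly that the two out-neighbours of each of $x,-x,1/x,-1/x$ again lie in this set (for instance $(1/x)^2=x^{-2}=x^3$, so $x$ has out-neighbours $\pm 1/x$), so that all eight outgoing edges stay inside the four-element set. In the second case, with $\lambda^2=-1$, I would likewise compute that $1$ and $\lambda$ carry loops while $-1\to\{\pm\lambda\}$ and $-\lambda\to\{\pm 1\}$, again confining every outgoing edge to $\{1,-1,\lambda,-\lambda\}$. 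Once the out-neighbours are trapped inside the set, Proposition~\ref{prop:perm-inout} forces the incoming edges to be absorbed inside the set as well, so the four vertices constitute a connected component.

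The forward implications demand essentially no new work, since they are exactly the computations already performed in the proof of Proposition~\ref{prop:cubic-com4}. The main obstacle is therefore the converse: one must confirm that the candidate quadruples are closed under taking successors and that no further vertex attaches to them. This is a short but slightly delicate bookkeeping of quadratic characters, keeping track of which of $u^3$ and $u^3/\lambda$ is a square for each of the four vertices $u$, and it is here that the hypotheses $x^5=1$ (respectively $\lambda^2=-1$), together with $-1$ being non-square (respectively square), are used in an essential way.
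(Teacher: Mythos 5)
Your proposal is correct and takes essentially the same route as the paper, which derives this proposition directly from the case analysis in the proof of Proposition~\ref{prop:cubic-com4} specialized to $a=0$, reading the outcomes ($\lambda=-1$, $x^5=1$, $x\ne 1$, $y=1/x$ in the first configuration; $\lambda^2=-1$ with vertices $1,-1,\lambda,-\lambda$ in the second) as existence criteria. Your explicit closure verification for the quadruples $\{x,-x,1/x,-1/x\}$ and $\{1,-1,\lambda,-\lambda\}$, with in-edges absorbed via Proposition~\ref{prop:perm-inout}, is exactly the sufficiency check the paper leaves implicit in the phrase ``From the above proof, we directly obtain.''
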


\subsection{Computations concerning connectedness}
\label{sec:cubic-com}

Recall that $p$ is an odd prime.
Here, we want to make some computations for the graphs $\cG(\lambda,X^3+a)$ over $\F_p$ concerning its connectedness
when $3 \nmid p-1$.
From Proposition~\ref{prop:cubic1}, we only need to consider the non-square elements $\lambda$ in $\N_p$, where $\N_p$ has been defined in \eqref{eq:Np}.
Our computations suggest that almost all the  graphs
$\cG(\lambda,X^3+a)$ ($\lambda$ runs over $\N_p$ and $a$ runs over $\F_p^*$)
are connected when $3 \nmid p-1$.

Let $C_3(p)$ (respectively, $U_3(p)$) be the number of connected (respectively, unconnected) graphs
among all the graphs $\cG(\lambda,X^3+a)$ when $\lambda$ runs over $\N_p$ and $a$ runs over $\F_p^*$,
and let $R_3(p)$ be the ratio of connected graphs, that is
$$
R_3(p) = \frac{C_3(p)}{C_3(p) + U_3(p)}.
$$
Further, let $M_3(p)$ be the maximum number of  connected components  of an unconnected graph counted in $U_3(p)$.

As the linear case, by Proposition~\ref{prop:cubic-com2}, Proposition~\ref{prop:cubic-com3} and \eqref{eq:Np}, we have
\begin{equation*}
U_3(p) \ge L(p),  \quad p > 5, \quad 3 \nmid p-1,
\end{equation*}
where $L(p)$ has been defined in \eqref{eq:Lp}.

From Table~\ref{tab:cubic-conn1}, we can see that almost all the graphs $\cG(\lambda,X^3+a)$ are connected,
and $U_3(p)$ is quite close to $L(p)$.
Moreover, the data suggest that each unconnected graph $\cG(\lambda,X^3+a)$ with $a \ne 0$ has exactly two connected components,
and its small connected component usually has exactly two vertices.

\begin{conjecture}  \label{conj:cubic-conn0}
Almost all the graphs $\cG(\lambda,X^3+a)$ over $\F_p$ are connected when $p$ satisfying $3 \nmid p-1$ goes to infinity.
\end{conjecture}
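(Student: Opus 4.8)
The plan is to show that the proportion of disconnected graphs tends to $0$, i.e. that the number $U_3(p)$ of disconnected graphs among the $|\N_p|(p-1)\sim p^2/4$ choices of $(\lambda,a)$ is $o(p^2)$. The first step is a structural reduction exploiting the $\pm$-symmetry of $\cG(\lambda,X^3+a)$. Since the two successors of any $x$ are $\pm s$ for a common $s$, and since $y$ and $-y$ always share the same pair of predecessors, every connected component not containing $0$ is invariant under $y\mapsto -y$; hence such components have even size and there is a unique (odd) component through $0$. Passing to the quotient $\bar\cG$ on the $(p+1)/2$ classes $\{\pm y\}$, one checks that $\bar\cG$ is, apart from $O(1)$ exceptional classes near $0$, a $2$-in $2$-out regular digraph, and that $\cG(\lambda,X^3+a)$ is connected if and only if $\bar\cG$ is strongly connected (using Proposition~\ref{prop:perm-conn} to identify weak and strong connectivity). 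This places the problem squarely in the setting of \cite[Theorem~5.1]{FF}, which asserts that almost every $2$-regular digraph is strongly connected.

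The second step isolates the contribution of small components, which can be treated exactly as in Section~\ref{sec:cubic-conn}. A proper closed subset $S$ of $\bar\cG$ of size $m$ (a union of strongly connected components) corresponds to a $\pm$-symmetric set $\Sigma\subseteq\F_p$ of $2m$ points satisfying $\Sigma^{3}+a=\{z^2:z\in\Sigma\}\cup\{\lambda z^2:z\in\Sigma\}$, where $\Sigma^{3}+a=\{y^{3}+a:y\in\Sigma\}$. For each fixed $m$ this is a system of algebraic conditions on $(\lambda,a)$, and the associated character sums and B\'ezout-type bounds (via Theorem~\ref{thm:Weil}, exactly as in Propositions~\ref{prop:cubic-com2}--\ref{prop:cubic-com4}) show that the number of $(\lambda,a)$ admitting a closed set of any bounded size $m\le K$ is $O_K(p)$, hence negligible compared with $p^2$.

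The heart of the argument, and the main obstacle, is to control closed subsets $S$ whose size $m$ grows with $p$, up to $m\le (p+1)/4$. The natural route is a first-moment estimate: writing $Z(\lambda,a)$ for the number of proper closed subsets of $\bar\cG(\lambda,a)$, every disconnected graph has $Z\ge 1$, so $U_3(p)\le\sum_{\lambda,a}Z(\lambda,a)=\sum_{S}\#\{(\lambda,a):S\text{ closed}\}$, the outer sum running over all $\pm$-symmetric candidate sets $\Sigma$. Heuristically the closure condition $\Sigma^{3}+a=\{z^2\}\cup\{\lambda z^2\}$ is so rigid that $\#\{(\lambda,a):S\text{ closed}\}$ decays fast enough to offset the $\binom{(p+1)/2}{m}$ choices of $S$; summing, one expects a total of size $O(p)$, in agreement with the data underlying Table~\ref{tab:cubic-conn1}.

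Making this rigorous is precisely where the difficulty lies: the number of possible component shapes grows exponentially in $m$, and no uniform-in-$S$ bound for $\#\{(\lambda,a):S\text{ closed}\}$ with the required square-root-type cancellation is currently available. I expect that a successful proof will instead require an equidistribution input, showing that as $(\lambda,a)$ vary the digraphs $\bar\cG$ realise the random $2$-in $2$-out model of \cite{FF} and \cite{CF} with errors that are summable over component sizes, most plausibly through a monodromy and Frobenius-equidistribution analysis of the families of curves $z^2=y^3+a$ and $\lambda z^2=y^3+a$ underlying the edge relation. This last step is the genuine crux, and is the reason the statement is formulated here only as a conjecture.
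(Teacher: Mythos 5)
You should first note that the paper itself offers no proof of this statement: it is stated as Conjecture~\ref{conj:cubic-conn0}, supported only by the computations of Section~\ref{sec:cubic-com} (Table~\ref{tab:cubic-conn1}, and the exhaustive check that $U_3(p)=L(p)$ for all admissible $31<p\le 3041$), together with the heuristic remark in the introduction that the graphs $\cG(\lambda,f)$ resemble random $2$-regular digraphs in the sense of \cite[Theorem 5.1]{FF} and \cite[Theorem 1]{CF}. Your preliminary reductions are sound and consistent with what the paper actually establishes: the $\pm$-symmetry arguments (common successor pair $\pm s$, shared predecessors of $y$ and $-y$, even size of components avoiding $0$) are implicit in Propositions~\ref{prop:perm-inout}, \ref{prop:cubic-com2}--\ref{prop:cubic-com4}, the identification of weak and strong connectivity is exactly Proposition~\ref{prop:perm-conn}, and your treatment of closed sets of bounded size $m\le K$ via algebraic conditions and Theorem~\ref{thm:Weil} is a legitimate extension of the paper's small-component analysis, plausibly giving $O_K(p)$ exceptional pairs $(\lambda,a)$.

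The genuine gap is the one you candidly name, and it is worth making precise why the proposed first-moment route fails as stated. The union bound $U_3(p)\le\sum_S\#\{(\lambda,a): S\ \text{closed}\}$ requires, for closed sets of size $m$ growing with $p$, a bound on the number of admissible $(\lambda,a)$ that decays faster than $\binom{(p+1)/2}{m}^{-1}p^2$, i.e.\ exponentially in $m$; Weil-type estimates such as Theorem~\ref{thm:Weil} yield savings of order $q^{1/2}$ per independent algebraic condition, but the closure condition for a set $\Sigma$ of size $2m$ is not visibly a system of $m$ independent conditions in the two parameters $(\lambda,a)$ alone --- once $m\ge 2$ the system is already overdetermined in $(\lambda,a)$, and the real question is how many \emph{sets} $\Sigma$ are compatible with some pair, which is not controlled by per-condition square-root cancellation. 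Likewise, the appeal to \cite{FF} cannot ``place the problem squarely'' in the random setting: that theorem concerns digraphs drawn uniformly at random, and no equidistribution statement is available asserting that the algebraic family $\{\bar\cG(\lambda,a)\}$ mimics the uniform $2$-in $2$-out model with errors summable over component sizes. Your closing suggestion (a monodromy/Frobenius-equidistribution analysis of the curves $z^2=y^3+a$ and $\lambda z^2=y^3+a$) is a reasonable direction, but as you acknowledge it is a program rather than a proof; your write-up correctly leaves the statement where the paper leaves it, namely as an open conjecture, and honestly so.
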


Our computations for all primes $p \le 3041$ satisfying $3 \nmid p-1$
shows that $U_3(p) = L(p)$ for any such prime $p \in [31,3041]$,
and for any $5 \le p \le 3041$ each unconnected graph $\cG(\lambda,X^3+a)$ with $a \ne 0$ over $\F_p$
has exactly two connected components.
Hence, we make the following conjectures.

\begin{conjecture}  \label{conj:cubic-conn1}
For any prime $p>31$ satisfying $3 \nmid p-1$, $U_3(p) = L(p)$.
\end{conjecture}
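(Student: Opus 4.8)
The plan is to establish the matching upper bound $U_3(p) \le L(p)$, since the reverse inequality $U_3(p) \ge L(p)$ has already been obtained from Propositions~\ref{prop:cubic-com2} and \ref{prop:cubic-com3} together with \eqref{eq:Np}. As a preliminary, I would check that the structured unconnected graphs contribute \emph{exactly} $L(p)$ pairs: the two-vertex family (with $a = (\lambda+1)(\lambda-1)^2/8 \ne 0$, forcing $\lambda \ne -1$) and the sporadic three-vertex family (with $(\lambda,a) = (2,1)$ or $(1/2,1/8)$, which falls inside $\N_p$ precisely when $2$ is a non-square, i.e. $p \equiv 3,5 \pmod 8$). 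Since $3 \nmid p-1$ makes $X^3+a$ a permutation polynomial, the map $\lambda \mapsto a$ is injective on $\N_p$, and the $p \bmod 8$ bookkeeping reproduces the value $L(p)$ exactly as in the linear case around~\eqref{eq:Lp}.

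The heart of the argument is to show that, for $p > 31$ with $3 \nmid p-1$, no other pair $(\lambda,a)$ with $a \ne 0$ yields a disconnected graph. Because every component is strongly connected (Proposition~\ref{prop:perm-conn}), is invariant under $x \mapsto -x$, and the graph is almost $2$-regular (Proposition~\ref{prop:perm-inout}), I would split the analysis into (i) ruling out a \emph{small} exceptional component of each fixed size $k \ge 4$, and (ii) ruling out a splitting into two \emph{large} components. For (i), the existence of a component on a $(\pm 1)$-invariant vertex set translates, via the relations $y^2 = x^3 + a$ or $\lambda y^2 = x^3 + a$ traversed along its edges, into a polynomial system in the component's vertices together with $\lambda$ and $a$; eliminating the vertex variables cuts out a curve of bounded degree (depending on $k$) in the $(\lambda,a)$-plane, so that only $O_k(1)$ pairs can occur, and one hopes to push the elimination (exactly as in Propositions~\ref{prop:cubic-com2}--\ref{prop:cubic-com4}) to show that none survive beyond those already listed.

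For part (ii) the internal degree balance of each strongly connected component gives no immediate contradiction, so a genuinely finer input is required; the parity obstruction used in Proposition~\ref{prop:perm-bi} does not transfer directly. I would instead try to count, via the character sums of Theorem~\ref{thm:Weil}, the edges crossing a hypothetical partition of $\F_p$ into two large, $(\pm 1)$-invariant pieces. Viewing the reversed graph as the $2$-out system driven by the successor maps $x \mapsto g(x^2)$ and $x \mapsto g(\lambda x^2)$, where $g$ is the compositional inverse of $X^3+a$, a pseudorandomness estimate for these maps should force any such partition to carry many crossing edges, contradicting disconnectedness; this is the analytic incarnation of the ``$2$-out graphs are almost surely connected'' heuristic recorded via \cite{CF,FF}.

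The main obstacle is making (i) and (ii) \emph{uniform} in both the component size $k$ and the prime $p$. For any single $k$ the elimination in (i) is routine, but admissible component sizes range up to roughly $q/2$, so no finite case check suffices; and while character sums in (ii) control the \emph{average} behaviour, they do not by themselves exclude sporadic medium-sized components for every individual prime. It is precisely this gap --- passing from ``connected for almost all $(\lambda,a)$'' to ``disconnected only in the explicitly listed cases for every $p>31$'' --- that appears to lie beyond the reach of the present character-sum and combinatorial techniques, which is why the statement is recorded as a conjecture rather than proved as a theorem.
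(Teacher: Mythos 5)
You have not produced a proof, and it is important to be clear that the paper does not contain one either: the statement you were given is recorded in the paper as a \emph{conjecture}, supported only by exhaustive computation (the authors verified $U_3(p)=L(p)$ for all primes $31<p\le 3041$ with $3\nmid p-1$, cf.\ Table~\ref{tab:cubic-conn1}) together with the proved lower bound $U_3(p)\ge L(p)$ coming from Propositions~\ref{prop:cubic-com2} and \ref{prop:cubic-com3} and the counts \eqref{eq:Np} and \eqref{eq:Lp}. Your preliminary step --- checking that the two-vertex family $a=(\lambda+1)(\lambda-1)^2/8$ over $\lambda\in\N_p$, corrected for $\lambda=-1$ and augmented by the sporadic pair $(\lambda,a)=(2,1)$ exactly when $p\equiv 3,5\pmod 8$, contributes precisely $L(p)$ disconnected graphs --- is sound and matches the paper's bookkeeping, so the content of the conjecture is indeed the upper bound $U_3(p)\le L(p)$, as you say.

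The genuine gap is the one you yourself identify, and it is worth naming why neither of your two devices can close it. For (i), the elimination argument that works in Propositions~\ref{prop:cubic-com2}--\ref{prop:cubic-com4} is inherently a fixed-$k$ computation: for each component size $k$ the $\pm$-symmetry and the edge relations $y^2=x^3+a$, $\lambda y^2=x^3+a$ cut out a variety in the vertex variables and $(\lambda,a)$, but the number of combinatorial edge-patterns and the degrees of the eliminants grow with $k$, and $k$ ranges up to about $p/2$; there is no uniform statement to induct on, so ``routine for each $k$'' does not aggregate into a theorem for all $k$. For (ii), Weil-type bounds as in Theorem~\ref{thm:Weil} control complete character sums, hence averages over all of $\F_p$ or over the family $(\lambda,a)$; to exclude a partition of $\F_p$ into two $\pm$-invariant pieces with no crossing edges you would need to control an incomplete sum over an \emph{arbitrary} subset of size up to $p/2$, for which square-root cancellation gives nothing (the trivial bound already dominates). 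This is exactly the ``almost all versus all'' barrier: the heuristics of \cite{CF,FF} and character-sum technology can plausibly support Conjecture~\ref{conj:cubic-conn0}, but not the exact equality $U_3(p)=L(p)$ for every individual $p>31$. Your final paragraph correctly concludes that the statement lies beyond current techniques; since the paper proves nothing beyond the lower bound and the finite verification, your honest non-proof is, in substance, the same state of knowledge the authors record.
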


\begin{conjecture}  \label{conj:cubic-conn2}
For any prime $p \ge 5$ satisfying $3 \nmid p-1$, each unconnected graph $\cG(\lambda,X^3+a)$ with $a \ne 0$ over $\F_p$
has exactly two connected components.
\end{conjecture}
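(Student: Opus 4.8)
The plan is to first reduce the statement to a cleaner combinatorial question and then attack that question; I expect the reduction to be fully provable with the tools already in this paper, while the final bound is the genuine obstacle (which is presumably why the statement is recorded as a conjecture rather than a theorem).

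First I would pin down the two exceptional vertices. Since $3\nmid p-1$, the polynomial $f=X^3+a$ is a permutation polynomial, so Section~\ref{sec:perm} applies. As $a\ne 0$ we have $f(0)=a\ne 0$, hence by Proposition~\ref{prop:perm-inout} the vertex $0$ has out-degree $2$ and in-degree $1$, the vertex $r:=f^{-1}(0)=(-a)^{1/3}\ne 0$ has out-degree $1$ and in-degree $2$, and every other vertex is balanced with in-degree and out-degree both $2$. For any weakly connected component $C$ there are no edges leaving $C$, so the sum of in-degrees over $C$ equals the sum of out-degrees over $C$; writing $e(v)=\mathrm{outdeg}(v)-\mathrm{indeg}(v)$, this says $\sum_{v\in C}e(v)=0$. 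Since $e(0)=+1$, $e(r)=-1$ and $e(v)=0$ otherwise, a component cannot contain exactly one of $0,r$. Thus \emph{$0$ and $r$ always lie in one common component} $C_0$, and every other component consists entirely of balanced vertices. Such a balanced component contains, together with each $y$, both out-neighbours $\pm y$ (the out-neighbours of any $x$ with $f(x)\ne 0$ form a pair $\{y,-y\}$ because $\lambda$ is non-square), so it is closed under $y\mapsto -y$ and, avoiding $0$, has even order. Hence the conjecture is equivalent to: \emph{when $\cG(\lambda,X^3+a)$ is disconnected, there is exactly one balanced component.}

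Next I would linearise the connectivity. For $y\ne 0$ the two in-neighbours of $y$ are $f^{-1}(y^2)$ and $f^{-1}(\lambda y^2)$, and these depend only on $y^2$; so $y$ and $-y$ share both in-neighbours, and for each nonzero square $t$ the four vertices $\{f^{-1}(t),f^{-1}(\lambda t)\}$ and $\{\sqrt t,-\sqrt t\}$ form a complete bipartite gadget $K_{2,2}$. Contracting each $\pm$-pair and each such gadget, the components of $\cG(\lambda,X^3+a)$ are in bijection with those of an auxiliary undirected graph $\Gamma$ on the $(p+1)/2$ classes (the nonzero squares together with the class of $0$), whose edges join a square $t$ to $\alpha(t):=f^{-1}(t)^2=(t-a)^{2/3}$ and to $\beta(t):=f^{-1}(\lambda t)^2=(\lambda t-a)^{2/3}$, with the class of $0$ attached through $t=a$, $t=a/\lambda$ and the edge $r\to 0$. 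So the problem becomes: the explicit algebraic ``$2$-out'' graph $\Gamma$ (each nonzero-square node has out-degree $2$ under $\alpha,\beta$) has at most two connected components. Here I would try to isolate one giant component and bound the number of remaining pieces by estimating the number of short cycles of $\alpha,\beta$ via Weil-type character-sum bounds (Theorem~\ref{thm:Weil}) applied to the curves cutting out $\alpha$ and $\beta$, in the spirit of Propositions~\ref{prop:cubic-com2}--\ref{prop:cubic-com4}, which already enumerate the components of size $2,3,4$.

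The hard part will be this last bound. Controlling the number of components of a \emph{deterministic}, algebraically defined $2$-out graph is exactly where the random-graph intuition of \cite{FF,CF} supplies heuristics but no theorem: character sums can rule out or count components up to any fixed size and can establish connectedness for ``most'' $(\lambda,a)$ once $p$ is large, yet they do not forbid a splitting of the non-giant part into two or more pieces, and the conjecture is asserted uniformly down to $p\ge 5$, so one cannot even discard small primes for free. Proving that $\Gamma$ never breaks off a second small component seems to demand a structural rigidity of the pair $(\alpha,\beta)=\big((t-a)^{2/3},(\lambda t-a)^{2/3}\big)$ that I do not see how to extract from the Weil bound alone; this is the essential obstruction and, I expect, the reason the statement remains at the level of a conjecture.
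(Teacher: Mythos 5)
This statement is recorded in the paper as a \emph{conjecture}, and the paper contains no proof of it: the only evidence offered is exhaustive computation (all primes $5\le p\le 3041$ with $3\nmid p-1$, summarized in Table~\ref{tab:cubic-conn1} via $M_3(p)=2$) together with the heuristic that $\cG(\lambda,f)$ behaves like a random $2$-regular digraph in the sense of \cite{FF,CF}. So there is no argument of the authors to compare your final step against, and your decision to present a reduction plus an honest identification of the obstruction is the appropriate one. Your reductions are, as far as I can check, correct and go beyond anything stated in the paper. The degree-excess bookkeeping ($e(v)=\mathrm{outdeg}(v)-\mathrm{indeg}(v)$ sums to zero over any weak component, with $e(0)=+1$, $e(f^{-1}(0))=-1$ by Proposition~\ref{prop:perm-inout}) correctly forces $0$ and $r=(-a)^{1/3}$ into a single common component; this is the same counting device the authors use inside the proof of Proposition~\ref{prop:perm-conn}, though they never isolate this consequence. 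Your closure of every other component under $y\mapsto -y$ (every nonzero $y$ has in-degree $2$, and the in-neighbours $f^{-1}(y^2)$, $f^{-1}(\lambda y^2)$ depend only on $y^2$) and the resulting even-order conclusion are sound, and they mesh nicely with the paper's own results: Proposition~\ref{prop:cubic-com4} rules out size $4$, so your parity constraint shows any extra component has size $2$ or size at least $6$, consistent with the observation after Table~\ref{tab:cubic-conn1} that the small component ``usually has exactly two vertices.'' The contraction through the $K_{2,2}$ gadgets to the auxiliary graph $\Gamma$ on $(p+1)/2$ classes with maps $t\mapsto (t-a)^{2/3}$ and $t\mapsto(\lambda t-a)^{2/3}$ is also a genuine structural gain over the paper.

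The gap is exactly the one you name, and it is worth being precise about why the paper's toolkit cannot close it. Theorem~\ref{thm:Weil} settles any \emph{fixed} component size: that is precisely how Propositions~\ref{prop:cubic-com2}--\ref{prop:cubic-com4} dispose of sizes $2$, $3$, $4$, by solving the finitely many explicit polynomial systems a component of that size must satisfy. But the number of such systems grows with the size, each contributes a $O(\sqrt{q})$ error, and summing over sizes up to a constant multiple of $p$ destroys the main term; character sums can also show connectedness for ``most'' pairs $(\lambda,a)$, but they give no mechanism forbidding \emph{two} balanced components for some exceptional pair. Moreover, the conjecture is asserted uniformly down to $p\ge 5$, so even a hypothetical asymptotic result (``at most one balanced component for $p$ sufficiently large'') would not prove the statement as written without a finite verification, which is essentially what the authors' computation up to $3041$ already is. In short: your proposal correctly reduces the conjecture to a rigidity statement about the deterministic $2$-out graph $\Gamma$, correctly locates the open core there, and does not claim more than it proves; the statement remains open in the paper as well.
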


Conjecture~\ref{conj:cubic-conn1} suggests that when $p>31$ satisfying $3 \nmid p-1$,
if  a graph $\cG(\lambda,X^3+a)$ over $\F_p$ with $a \ne 0$
does not belong to the cases described in Propositions~\ref{prop:cubic-com2} and \ref{prop:cubic-com3},
then  it is a connected graph.

\begin{table}[!htbp]
\centering
\begin{tabular}{|c|c|c|c|c|c|}
\hline
$p$ & $C_3(p)$ & $U_3(p)$ & $L(p)$ & $M_3(p)$ & $R_3(p)$ \\ \hline

   41 &     188 &   8 &   8 & 2 & 0.95918 \\ \hline 

  107 &    2835 &  27 &  27 & 2 & 0.99056 \\ \hline 

  521 &   67470 & 130 & 130 & 2 & 0.99807 \\ \hline 

 1013 &  255782 & 254 & 254 & 2 & 0.99900 \\ \hline 

 1511 &  570403 & 377 & 377 & 2 & 0.99933 \\ \hline 

 2027 & 1026675 & 507 & 507 & 2 & 0.99950 \\ \hline 

 2531 & 1600857 & 633 & 633 & 2 & 0.99960 \\ \hline 

 3041 & 2309640 & 760 & 760 & 2 & 0.99967 \\ \hline 
\end{tabular}
\bigskip
\caption{Counting connected graphs in cubic case}
\label{tab:cubic-conn1}
\end{table}

\subsection{Hamiltonian cycles}

In light of Propositions~\ref{prop:perm-inout} and \ref{prop:cubic-indeg}, here we only consider the case when $3 \nmid p-1$.
Theorem~\ref{thm:perm-Ha} has confirmed the existence of Hamiltonian cycles
in connected components of the graph $\cG(\lambda,X^3+a)$ over $\F_p$ when $3 \nmid p-1$.
Here, we make some computations on counting Hamiltonian cycles of connected graphs.

Let $H_{31}(p)$ (respectively, $H_{32}(p)$) be the minimal (respectively, maximal) number of Hamiltonian cycles in a connected graph of the form $\cG(\lambda,X^3+a)$ over $\F_p$
($\lambda$ runs over $\N_p$ and $a$ runs over $\F_p^*$).
Then, let $H_3(p)$ be the average number of Hamiltonian cycles in these connected graphs.

By Corollary~\ref{cor:cubic-HT1}, a connected graph $\cG(\lambda,X^3+a)$ over $\F_p$ has no Hamiltonian cycle of Type $1$ when $p> 17$.
Let $R_{32}(p)$ (respectively, $R_{33}(p)$) be the ratio of such connected graphs having Hamiltonian cycles of Type $2$ (respectively, Type $3$)
over all the connected graphs (when $\lambda$ runs over $\N_p$ and $a$ runs over $\F_p^*$).
Let $A_{32}(p)$ (respectively, $A_{33}(p)$) be the average number of  Hamiltonian cycles of Type $2$ (respectively, Type $3$)
over all such connected graphs having Hamiltonian cycles of Type $2$ (respectively, Type $3$).

From Table~\ref{tab:cubic-hamilton}, we can see that there are many Hamiltonian cycles in a connected graph $\cG(\lambda,X^3+a)$ over $\F_p$,
whose amount grows rapidly with respect to $p$. 

\begin{table}[!htbp]
\centering
\begin{tabular}{|c|c|c|c|c|}
\hline
$p$ & $H_{31}(p)$ & $H_{32}(p)$ & $H_3(p)$ \\ \hline

11 &     1 &      11 &        4.18518 \\ \hline

17 &     1 &      64 &       19.20689 \\ \hline

23 &     3 &     330 &       91.65079 \\ \hline

29 &    24 &    2574 &      585.11702 \\ \hline

41 &   602 &  127573 &    26075.50256 \\ \hline

47 &  4444 &  923740 &   187351.93160 \\ \hline

53 & 35169 & 6920444 &  1262002.85498 \\ \hline

\end{tabular}
\bigskip
\caption{Counting Hamiltonian cycles in cubic case}
\label{tab:cubic-hamilton}
\end{table}

Tables~\ref{tab:cubic-HT2} and \ref{tab:cubic-HT3} suggest that although many connected graphs have Hamiltonian cycles of Types 2 and 3,
 these two types of Hamiltonian cycles occupy a small proportion when $p$ is large.
 This means that each connected graph $\cG(\lambda,X^3+a)$ over $\F_p$ is likely to have many types of Hamiltonian cycles.

\begin{table}[!htbp]
\centering
\begin{tabular}{|c|c|c|c|}
\hline
$p$  & $A_{32}(p)$ & $A_{32}(p)/H_3(p)$ & $R_{32}(p)$ \\ \hline

11 &   1.41666 & 0.33849 & 0.88888 \\ \hline

17 &   2.65853 & 0.13841 & 0.70689 \\ \hline

23 &   4.32394 & 0.04717 & 0.56349 \\ \hline

29 &   7.24647 & 0.01238 & 0.75531 \\ \hline

41 &  27.42268 & 0.00105 & 0.49743 \\ \hline

47 &  58.89082 & 0.00031 & 0.42329 \\ \hline

53 & 105.13149 & 0.00008 & 0.49395 \\ \hline

\end{tabular}
\bigskip
\caption{Hamiltonian cycles of Type 2 in cubic case}
\label{tab:cubic-HT2}
\end{table}

\begin{table}[!htbp]
\centering
\begin{tabular}{|c|c|c|c|}
\hline
$p$  & $A_{33}(p)$ & $A_{33}(p)/H_3(p)$ & $R_{33}(p)$ \\ \hline

11 &     2.50000 & 0.59734 & 0.81481 \\ \hline

17 &     7.69642 & 0.40071 & 0.96551 \\ \hline

23 &    31.16260 & 0.34001 & 0.97619 \\ \hline

29 &   119.66486 & 0.20451 & 0.98404 \\ \hline

41 &  1878.58549 & 0.07204 & 0.98974 \\ \hline

47 &  7503.64606 & 0.04005 & 0.98706 \\ \hline

53 & 37585.84218 & 0.02978 & 0.99546 \\ \hline
\end{tabular}
\bigskip
\caption{Hamiltonian cycles of Type 3 in cubic case}
\label{tab:cubic-HT3}
\end{table}

\section{Comments}

Assume that $f$ is a permutation polynomial over $\F_q$.
In Theorem~\ref{thm:perm-balance} we have shown that in the graph $\cG(\lambda,f)$,
along any Hamiltonian cycle of any connected component, we can get a balancing binary sequence.
Our computations in Sections~\ref{sec:linear-com} and \ref{sec:cubic-com} suggest that the graph $\cG(\lambda,f)$ is usually connected.
That is, using this way we can frequently obtain a balancing binary periodic sequence of period $q$.
Note that the balance property is one of the three randomness postulates about a binary sequence suggested by Golomb;
see  \cite[Chapter 5]{Gong}.
The other two postulates are called the run property and the correlation property.
If the types of graphs studied in this paper frequently yield a balancing sequence which also has good run property and correlation property,
then this gives a good way to construct pseudorandom number generators.

In addition, based on our computations the graph $\cG(\lambda,f)$ is likely to
be connected.
It will be interesting and also challenging to confirm this theorectically,
such as proving $\cG(\lambda,f)$ is connected for an infinite family of permutation polynomials.

When $Y^2=X^3+aX+b$ defines an elliptic curve over $\F_q$,
it is also interesting to investigate the relation between properties of  the graph $\cG(\lambda,X^3+aX+b)$
and the arithmetic of the corresponding elliptic curve.

In fact, the graphs studied in this paper are arised from quadratic twists (see \eqref{eq:equation}). 
One can generalize them to higher twists. 
For example, if $k \mid q -1$ and $\mu$ is not a $k$-th power in $\F_q$, one can study the graph generated by the equation
$$
(Y^k - f(X)) (\mu Y^k - f(X))  \cdots (\mu^{k-1} Y^k - f(X)) = 0. 
$$

More generally, for any $k$ polynomials $f_0, f_1, \ldots, f_{k-1} \in \F_q[X]$ and any positive integer $n$, 
one can  study the graph generated by the equation
$$
(Y^n - f_0(X)) (Y^n - f_1(X)) \cdots ( Y^n - f_{k-1}(X)) = 0. 
$$
Moreover, in this graph an edge $(x,y)$ has weight $i$ if $y^n = f_i(x)$.

\section*{Acknowledgements}

The authors want to thank the referees for their valuable comments. 
They also would like to thank Igor Shparlinski for stimulating discussions and useful comments.
For the research, Bernard Mans was partially supported
by the  Australian Research Council Grants DP140100118 and DP170102794, and  Min Sha by a
Macquarie University Research Fellowship and the Australian Research Council Grant DE190100888.


\begin{thebibliography}{99}

\bibitem{BuSch}
C. Burnette and E. Schmutz, \textit{Periods of iterated rational functions over a finite field},
Intern. J. Number Theory,  {\bf 13} (2017),   1301--1315.

\bibitem{CGM}
W.S. Chou, J. Gomez-Calderon and G.L. Mullen, \textit{Value sets of Dickson polynomials over finite fields},
J. Number Theory, {\bf 30} (1988), 334--344.

\bibitem{CF}
C. Cooper  and A. Frieze, \textit{Hamilton cycles in random graphs and directed graphs}, Random Struct. Algor., \textbf{16} (2000), 369--401. 

\bibitem{FF}
T.I. Fenner and A.M. Frieze, \textit{On the connectivity of random $m$-orientable graphs and digraphs}, Combinatorica, \textbf{2} (1982), 347--359.

\bibitem{FlGar}
R. Flynn and D. Garton, \textit{Graph components and dynamics over finite fields},
 Intern. J. Number Theory, {\bf 10} (2014), 779--792.

\bibitem{Gong}
S.W. Golomb and G. Gong, \textit{Signal design for good correlation -- for wireless communication, cryptography, and radar},
Cambridge University Press, Cambridge, 2005.

\bibitem{GR}
C. Godsil and G. Royle, \textit{Algebraic graph theory}, Springer, New York, 2001. 


\bibitem{HS}
 S. Hu and M. Sha, \textit{Monomial dynamical systems of dimension one over finite fields}, Acta Arith., \textbf{148} (2011), 309--331.


\bibitem{KLMMSS}
 S.V.~Konyagin, F. Luca, B. Mans, L.  Mathieson, M.~Sha and I.E. Shparlinski,
 \textit{Functional graphs of polynomials over finite fields}, J. Combin. Theory, Ser. B,  {\bf 116} ( 2016), 87--122.
 
 \bibitem{LN}
R. Lidl and H. Niederreiter, \emph{Finite fields},  Cambridge University Press, 1997.


\bibitem{MSSS}
B. Mans, M. Sha, I.E. Shparlinski and D. Sutantyo, \textit{On functional graphs of quadratic polynomials}, Exp. Math.,
\textbf{28} (2019), 292--300.

\bibitem{MW}
Y. Meemark and A. Wongpradit, \textit{Permutation polynomials and elliptic curves},
Notes on Number Theory and Discrete Math.,  \textbf{17}(4) 2011, 1--8.


\bibitem{OstSha}
A. Ostafe and M. Sha, \textit{Counting dynamical systems over finite fields},
  Contemp.  Math., v.\textbf{669},  Amer. Math. Soc., 2016,  187--203.

 \bibitem{VaSha}
 T. Vasiga and J.O.  Shallit, \textit{On the iteration of certain quadratic maps over ${\mathrm{GF}(p)}$},
Discr. Math.,  {\bf 277} (2004), 219--240.
\end{thebibliography}
\end{document}